\title{Infinite Hex is a draw}
\author{Joel David Hamkins}
\address[Joel David Hamkins]
{O'Hara Professor of Philosophy and Mathematics, University of Notre Dame, 100 Malloy Hall, Notre Dame, IN 46556 USA \&\ Associate Faculty Member, Professor of Logic, Faculty of Philosophy, 
Radcliffe Observatory Quarter 555, Woodstock Road, Oxford, OX2 6GG UK}
\email{jdhamkins@nd.edu}
\urladdr{http://jdh.hamkins.org}
\author{Davide Leonessi}
\address[Davide Leonessi]
{Program in Mathematics, the Graduate Center of the City University of New York, 365 Fifth Avenue, New York, NY 10016 USA}
\email{dleonessi@gc.cuny.edu}
\urladdr{http://leonessi.org}
\thanks{This article is adapted from chapter 2 of the second author's MSc dissertation~\cite{Leonessi2021:MSc-dissertation-transfinite-game-values-in-infinite-games}, for which he earned a distinction at the University of Oxford in September 2021. Commentary can be made about this article on the first author's blog at \href{http://jdh.hamkins.org/infinite-hex-is-a-draw}{http://jdh.hamkins.org/infinite-hex-is-a-draw}.}
\newtheorem{theorem}{Theorem}
\newtheorem*{theorem*}{Theorem}
\newtheorem*{maintheorem*}{Main Theorem}
\newtheorem*{maintheorems*}{Main Theorems}
\newtheorem{corollary}[theorem]{Corollary}
\newtheorem*{corollary*}{Corollary}
\newtheorem*{corollaries*}{Corollaries}
\newtheorem{conjecture}[theorem]{Conjecture}
\theoremstyle{definition}
\newtheorem*{definition*}{Definition}
\newtheorem{question}[theorem]{Question}
\newtheorem*{question*}{Question}
\newtheorem*{questions*}{Questions}
\newtheorem*{mainquestion*}{Main Question} 
\newtheorem*{openquestion*}{Open Question} 
\theoremstyle{remark}
\newcommand{\QED}{\end{proof}}
\def\proclaim[#1]{{\bf #1}}
\def\BF#1.{{\bf #1.}}
\def\says#1:#2\par{\item[#1] #2\par}
\newcommand{\dotminus}{\mathbin{\text{\@dotminus}}}
\newcommand{\@dotminus}{%
  \ooalign{\hidewidth\raise1ex\hbox{.}\hidewidth\cr$\m@th-$\cr}%
}
\renewcommand{\setminus}{\raise.3ex\hbox{\rotatebox{-20}{$-$}}} 
\newcommand{\smalllt}{\mathrel{\mathchoice{\raise2pt\hbox{$\scriptstyle<$}}{\raise1pt\hbox{$\scriptstyle<$}}{\raise0pt\hbox{$\scriptscriptstyle<$}}{\scriptscriptstyle<}}}
\newcommand{\smallleq}{\mathrel{\mathchoice{\raise2pt\hbox{$\scriptstyle\leq$}}{\raise1pt\hbox{$\scriptstyle\leq$}}{\raise1pt\hbox{$\scriptscriptstyle\leq$}}{\scriptscriptstyle\leq}}}
   \def\DHLhksqrt#1#2{%
   \setbox0=\hbox{$#1\sqrt{#2\,}$}\dimen0=\ht0
   \advance\dimen0-0.2\ht0
   \setbox2=\hbox{\vrule height\ht0 depth -\dimen0}%
   {\box0\lower0.4pt\box2}}
\newcommand{\boolval}[1]{\mathopen{\lbrack\!\lbrack}\,#1\,\mathclose{\rbrack\!\rbrack}}
\def\[#1]{\boolval{#1}}
\newbox\gnBoxA
\newbox\gnBoxB
\newdimen\gnCornerHgt
\newdimen\gnArgHgt
\def\gcode #1{%
\setbox\gnBoxA=\hbox{$#1$}%
\setbox\gnBoxB=\hbox{$\bar #1$}%
\gnArgHgt=\ht\gnBoxB%
\ifnum     \gnArgHgt<\gnCornerHgt \gnArgHgt=0pt%
\else \advance \gnArgHgt by -\gnCornerHgt%
\fi \raise\gnArgHgt\hbox{\tiny$\ulcorner$} \box\gnBoxA %
\raise\gnArgHgt\hbox{\tiny$\urcorner$}}
\newcommand{\UnderTilde}[1]{{\setbox1=\hbox{$#1$}\baselineskip=0pt\vtop{\hbox{$#1$}\hbox to\wd1{\hfil$\sim$\hfil}}}{}}
\newcommand{\Undertilde}[1]{{\setbox1=\hbox{$#1$}\baselineskip=0pt\vtop{\hbox{$#1$}\hbox to\wd1{\hfil$\scriptstyle\sim$\hfil}}}{}}
\newcommand{\undertilde}[1]{{\setbox1=\hbox{$#1$}\baselineskip=0pt\vtop{\hbox{$#1$}\hbox to\wd1{\hfil$\scriptscriptstyle\sim$\hfil}}}{}}
\newcommand{\UnderdTilde}[1]{{\setbox1=\hbox{$#1$}\baselineskip=0pt\vtop{\hbox{$#1$}\hbox to\wd1{\hfil$\approx$\hfil}}}{}}
\newcommand{\Underdtilde}[1]{{\setbox1=\hbox{$#1$}\baselineskip=0pt\vtop{\hbox{$#1$}\hbox to\wd1{\hfil\scriptsize$\approx$\hfil}}}{}}
\def\<#1>{\left\langle#1\right\rangle}
\newcommand{\cell}[1]{\boxit{\hbox to 17pt{\strut\hfil$#1$\hfil}}}
\newcommand{\head}[2]{\lower2pt\vbox{\hbox{\strut\footnotesize\it\hskip3pt#2}\boxit{\cell#1}}}
\newcommand{\boxit}[1]{\setbox4=\hbox{\kern2pt#1\kern2pt}\hbox{\vrule\vbox{\hrule\kern2pt\box4\kern2pt\hrule}\vrule}}
\newcommand{\Col}[3]{\hbox{\vbox{\baselineskip=0pt\parskip=0pt\cell#1\cell#2\cell#3}}}
\newcommand{\tapenames}{\raise 5pt\vbox to .7in{\hbox to .8in{\it\hfill input: \strut}\vfill\hbox to
.8in{\it\hfill scratch: \strut}\vfill\hbox to .8in{\it\hfill output: \strut}}}
\newcommand{\Head}[4]{\lower2pt\vbox{\hbox to25pt{\strut\footnotesize\it\hfill#4\hfill}\boxit{\Col#1#2#3}}}
\newcommand{\Dots}{\raise 5pt\vbox to .7in{\hbox{\ $\cdots$\strut}\vfill\hbox{\ $\cdots$\strut}\vfill\hbox{\
$\cdots$\strut}}}
\renewcommand{\UrlFont}{} 
\addcolon\nolinkurl{#1}}\iffieldundef{eprintclass}{}{\UrlFont{\mkbibbrackets{\thefield{eprintclass}}}}}
\addcolon\nolinkurl{#1}\iffieldundef{eprintclass}{}{\UrlFont{\mkbibbrackets{\thefield{eprintclass}}}}}}
\begin{document}

\begin{abstract}
We introduce the game of infinite Hex, extending the familiar finite game to natural play on the infinite hexagonal lattice. Whereas the finite game is a win for the first player, we prove in contrast that infinite Hex is a draw---both players have drawing strategies. Meanwhile, the transfinite game-value phenomenon, now abundantly exhibited in infinite chess and infinite draughts, regrettably does not arise in infinite Hex; only finite game values occur. Indeed, every game-valued position in infinite Hex is intrinsically local, meaning that winning play depends only on a fixed finite region of the board. This latter fact is proved under very general hypotheses, establishing the conclusion for all simple stone-placing games.
\end{abstract}
\maketitle

\begin{wrapfigure}[12]{r}{.26\textwidth}\vskip-5ex\hfill
  \includegraphics[width=.22\textwidth]{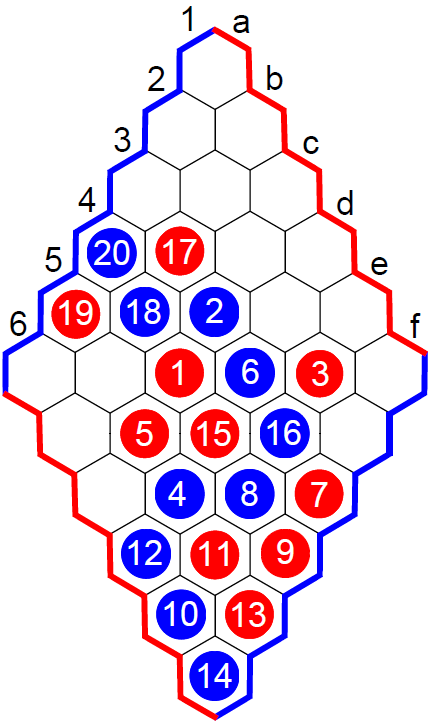}
  \captionsetup{style=rightside,font=footnotesize}
  \caption{The authors play a game of Hex}
  \label{Figure.Finite-Hex}
\end{wrapfigure}
\section{Introduction}
The game of Hex, popular amongst mathematicians, is played between two players, Red and Blue, who alternately place colored stones on a finite hexagonal grid, a rhombus, until one of them wins by connecting their sides of the board with stones of their color.\footnote{The game of Hex was invented in 1942 by polymath Piet Hein~\cite{hein42} under the name Polygon, becoming a popular board game in Denmark. The game was rediscovered by Nobel laureate John Nash while a student at Princeton in 1948; he described it as a ``matter of connecting topology and game theory''~\cite[\textsection 3]{hayward06}. Thus from the Princeton mathematics department common room, the game of Nash rose to popularity in the mathematical community, eventually published as the mainstream board game Hex in 1952.}
Blue has won the small game shown in Figure~\ref{Figure.Finite-Hex}, for example,
played just now by the authors, by connecting the two blue sides. The game of Hex offers enjoyably subtle strategic play---we highly recommend it---but it also has a very satisfactory mathematical analysis, involving applications of topology and graph theory in combinatorial game theory. The central facts can be summarized as follows.
\begin{theorem}[Finite Hex Theorem]\label{Theorem.Finite-hex}\ 
\begin{enumerate}
 \item\label{Theorem.Finite-hex.1} Every coloring of the finite rhombus Hex board with two colors exhibits a win for exactly one player. 
 \item\label{Theorem.Finite-hex.2} The game therefore admits a winning strategy for one of the players, and indeed it is the first player who has a winning strategy.
\end{enumerate}
\end{theorem}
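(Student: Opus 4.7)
The plan is to prove the two parts in sequence, treating part~(1) as the combinatorial core and part~(2) as an easy consequence. For part~(1) I would first show ``at most one winner'': a red chain from top to bottom and a blue chain from left to right, viewed as continua in the planar rhombus, would have to intersect by a Jordan-curve-style argument, but no single hexagonal cell can be two colors, and a short local check rules out edge-wise crossings at the hexagonal lattice. Then for ``at least one winner'' I would use a boundary-tracing argument: form the auxiliary graph whose vertices are the meeting points of the hexagons (plus the four rhombus corners) and whose edges are exactly those hexagon-edges separating cells of different colors, where the four sides of the rhombus are treated as colored consistently with the side-player. At each interior vertex three cells meet, and parity forces the number of color-change edges to be $0$ or~$2$; at each corner of the rhombus exactly one color-change edge emerges. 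The color-change subgraph therefore decomposes into disjoint simple walks and cycles, with the walks pairing up the four corners. Whichever pairing occurs encloses a region which, by connectivity of the monochromatic cells inside it, must contain a winning chain for exactly one player.

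For part~(2) I would invoke Zermelo's theorem together with a strategy-stealing argument. By part~(1) Hex has no drawn terminal positions, and being a finite perfect-information two-player game it is determined, so one of the two players has a winning strategy. Suppose for contradiction that the second player has a winning strategy~$\sigma$. The first player ``steals'' $\sigma$ by placing an arbitrary opening stone and then playing as the second player would according to $\sigma$, pretending the extra stone is invisible; if $\sigma$ ever instructs a move on the already-occupied cell, the first player places a stone on some other empty cell and re-designates that new stone as the invisible extra. The crucial monotonicity property of Hex is that an additional same-color stone can never destroy a winning chain, since winning chains are monotone in same-color stones. Hence the first player wins the actual game, contradicting that $\sigma$ was winning for the second player; therefore, the first player has a winning strategy.

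The hardest step, I expect, will be formalizing the boundary-tracing in part~(1) so that the pairing of corners correctly identifies which color has won. In particular, one must verify that the enclosed region on the appropriate side of the walk really contains a connected same-color path between the two rhombus sides of that color, rather than merely witnessing a color-majority somewhere. The strategy-stealing argument in part~(2), though short, also requires an explicit appeal to the Hex-specific monotonicity property, which is not a feature of arbitrary placement games and so deserves a short verification rather than being taken for granted.
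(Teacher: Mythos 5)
Your part (1) is essentially the paper's argument: the Jordan-curve uniqueness sketch matches, and your parity-based color-change subgraph is a global reformulation of Gale's deterministic tour (the paper follows the unique path that keeps a red hexagon on the left and a blue one on the right). Indeed, the observation that rescues your self-identified ``hardest step'' is exactly Gale's: at a degree-$2$ vertex the two color-change edges both bound the odd-colored cell, so any walk in your subgraph keeps red consistently on one side and blue on the other; hence the cells along one side of the corner-to-corner path \emph{themselves} form the winning monochromatic chain, and no separate argument about connectivity inside an ``enclosed region'' is needed---which pair of corners gets joined tells you directly which player has won.

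There is, however, one genuine gap, in part (2). As literally described, your strategy stealing does not typecheck: a winning strategy $\sigma$ for the second player is a strategy for placing second-player-colored stones so as to join the \emph{second player's} pair of sides, whereas the first player wins by joining the other pair of sides with the other color. If the first player follows $\sigma$ verbatim while treating his extra stone as invisible, he builds a configuration of first-player stones realizing the second player's connection pattern, which is not a win for him. The argument needs the board symmetry that swaps the two winning conditions: the first player must play $\sigma$ conjugated by the color swap together with the reflection of the rhombus across its center vertical, as the paper makes explicit, since this transformation carries second-player winning configurations exactly to first-player ones. With that conjugation in place, your bookkeeping (arbitrary opening stone, re-designating the invisible extra upon collision) and your appeal to monotonicity are correct and complete the proof; the monotonicity itself, which you rightly insist on verifying, is immediate here because the winning conditions are upward-closed families of same-colored sets, so adding a same-colored stone never destroys a winning chain.
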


For background, let us briefly sketch proofs for these well-known results. The main claim of statement~(\ref{Theorem.Finite-hex.1}) holds not only for the standard $n\times n$ rhombus-shaped\goodbreak

\begin{wrapfigure}{r}{.28\textwidth}
\hfill
\includegraphics[width=.25\textwidth]{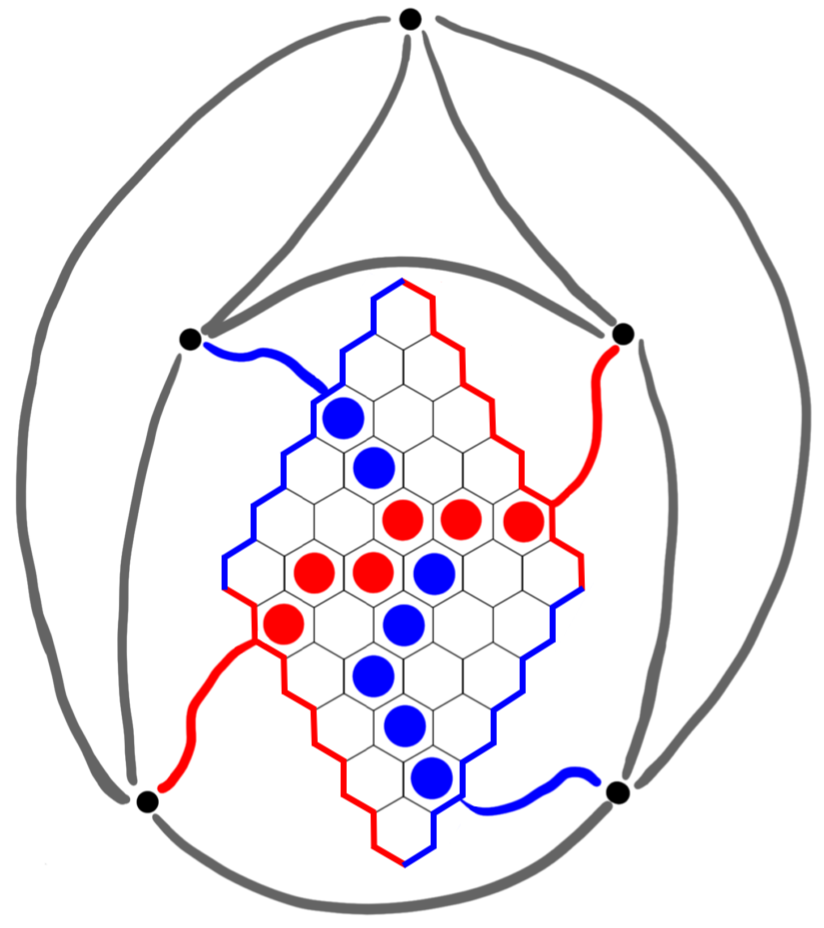}
\captionsetup{style=rightside,font=footnotesize}
\caption{Hex and $K_5$}
\label{Figure.K_5}
\end{wrapfigure}
\noindent Hex board, but more generally for any connected assemblage of hexagons whose boundary is divided into four contiguous segments in the pattern red, blue, red, blue, with each boundary segment overlapping on exactly one hexagonal tile with the next, just as with the four corners of the rhombus board. Any path connecting the two red segments will topologically separate the two blue segments from one another in light of the Jordan curve theorem, and so at most one player can win a given play of the game, establishing the uniqueness claim. Schachner~\cite{schachner19} argues alternatively that if both players had a winning path in a game of Hex, then we could exhibit the complete graph $K_5$ on five vertices as a planar graph, as illustrated in Figure~\ref{Figure.K_5}, but this is impossible.

Meanwhile, for the existence claim of Theorem~\ref{Theorem.Finite-hex} statement~(\ref{Theorem.Finite-hex.1}), the claim that every coloring of the board exhibits a win for one of the players, Gale~\cite{Gale79} argues by embarking on a direct path from the south following edges of the hexagons and remaining always on the boundary between a red hexagon on the left and a blue hexagon at the right, as in Figure~\ref{Figure.Gale-tour} (we may imagine augmenting the main board with additional colored stones of the hexagons just outside each of the four borders, using the color of that border). This path is uniquely determined, since as it enters any vertex it has a red hexagon on the left and blue at the right, and so it must turn left or right depending on the color of the new hexagon encountered at that junction. The resulting path must ultimately terminate at the nodes labeled $e$, $w$, or $n$, and in each
\begin{wrapfigure}{r}{.28\textwidth}
  \hfill
  \includegraphics[width=.25\textwidth]{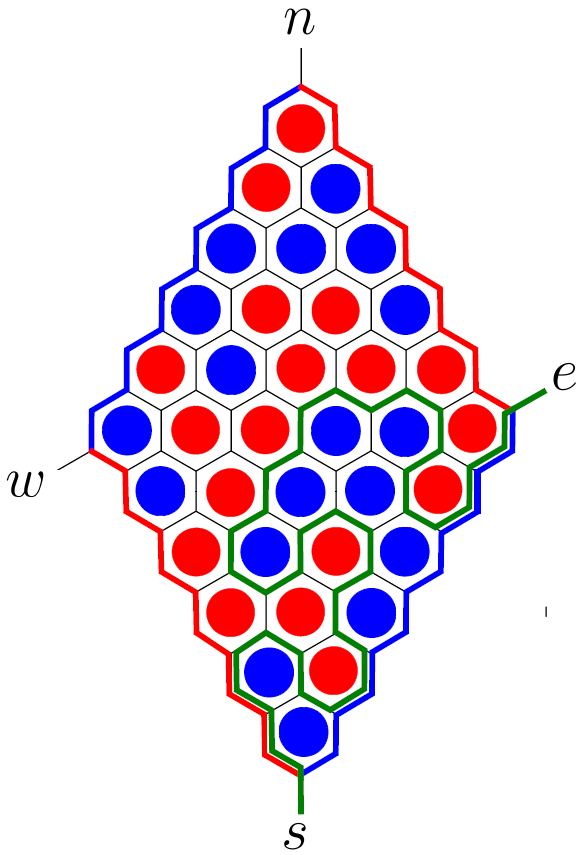}\hfill
  \captionsetup{style=rightside,font=footnotesize}
  \caption{Gale's tour}
  \label{Figure.Gale-tour}
\end{wrapfigure}
case one gets a winning path for one of the players by following the accompanying adjacent tiles on one side of the path or the other.\footnote{For both uniqueness and existence, these arguments use that the red and blue boundary segments overlap on exactly one hex tile with the adjacent segment. Namely, if the boundary segments did not have a tile in common, then one can exhibit drawn positions, where each player gets half the board, but there is no winner; and if they had overlapped by two cells, then there are positions in which both players have won.} Gale~\cite{Gale79} shows the theorem to be equivalent in a sense to the Brouwer Fixed Point Theorem, which in turn is shown equivalent to the Jordan Curve Theorem by~\cite{maehara84} and~\cite{adler16}.

The initial claim of Theorem~\ref{Theorem.Finite-hex} statement~(\ref{Theorem.Finite-hex.2}) now follows as an immediate consequence of (\ref{Theorem.Finite-hex.1}) by the fundamental theorem of finite games, which asserts that every finite game of perfect information admits a winning strategy for exactly one of the players (proved by Ernst Zermelo in 1913; see~\cite{Larson2010:Zermelo-1913}; see also~\cite[Section~7.7]{Hamkins2020:Proof-and-the-art-of-mathematics} for an elementary account). Namely, statement (\ref{Theorem.Finite-hex.1}) shows that every outcome of the game leads to a win for exactly one of the players in a uniformly bounded finite number of moves, and so this is a finite game covered by the fundamental theorem. Consequently, one of the players will have a winning strategy.

Finally, a strategy-stealing argument due to Nash~\cite{nash52} shows that the second player cannot have a winning strategy, and so it must be the first player who does so. Namely, if there would have been any advantage in doing so, player 1 can pretend to be player 2 by inventing an imaginary first move for player 2, responding just as a given strategy for player 2 would respond, but swapping the colors and reflecting the board on the center vertical, a transformation that exactly swaps the winning configurations of the two players. (If the actual player 2 should ever actually play the initial imaginary move, then player 1 should simply invent another imaginary move for player 2 at that stage, and play accordingly.) In this way, player 1 can play as though he or she is player 2, and this would be a winning strategy for player 1, if the strategy had been winning for player 2, which is a contradiction, since not both players can have winning strategies. Therefore, player 2 cannot actually have a winning strategy, and so it must be the first player who has the winning strategy. This argument generalizes to any shape of board, provided it is symmetric in a way that supports the strategy-stealing argument. The argument relies on the monotonicity property of Hex---having extra stones on the board is never disadvantageous for a player---since in the argument we described a play by which player 1 wins despite having given an imaginary extra stone to player 2, which can only be advantageous to player 2.

\enlargethispage{25pt}
This completes our sketch of the proof of Theorem~\ref{Theorem.Finite-hex}. Notice that because Blue won the game in Figure~\ref{Figure.Finite-Hex}, it was an upset win---we leave it to the reader to find the flaw in the strategy of Red, who chalks the loss up to overconfidence in light of Theorem~\ref{Theorem.Finite-hex} when playing first on such a small board.\goodbreak

\section{Infinite Hex}

We shall place our main focus in this article on the game of \emph{infinite} Hex, played on the infinite hexagonal lattice. Starting from an empty board, players Red and Blue alternately place their colored stones on the board, claiming hexagonal tiles. 

\begin{figure}[H]
  \centering
  \includegraphics[width=.85\textwidth]{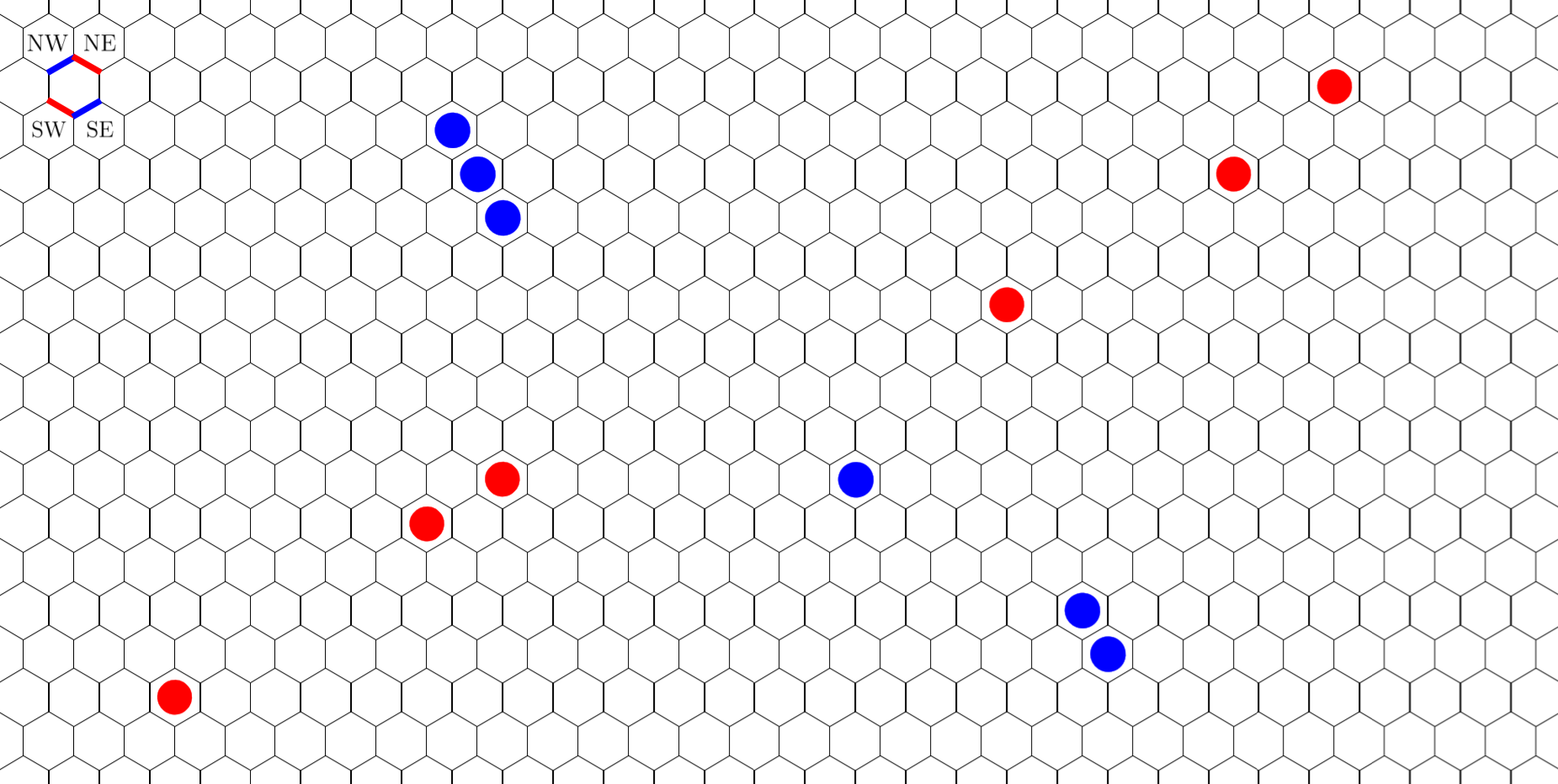}
  \caption{A partial play of infinite Hex}
  \label{Figure.partial-infinite-play}
\end{figure}

Play proceeds for infinitely many moves, after which a winner may be determined.

\subsection{The winning condition}

\enlargethispage{20pt}
But what is the winning condition exactly---how shall we determine who has won a play of infinite Hex? Since there are no boundary edges of the infinite board to be joined, it may not be clear initially exactly what condition entitles a player to be declared the winner. What is clear, however, is that we want to declare that Red wins, if there is a path of red hexagons stretching somehow from the lower left ``at infinity'' to the upper right ``at infinity.'' And similarly for Blue, but from the lower right to the upper left.\goodbreak 

\begin{wrapfigure}{r}{.35\textwidth}
\hfill
  \includegraphics[width=.33\textwidth]{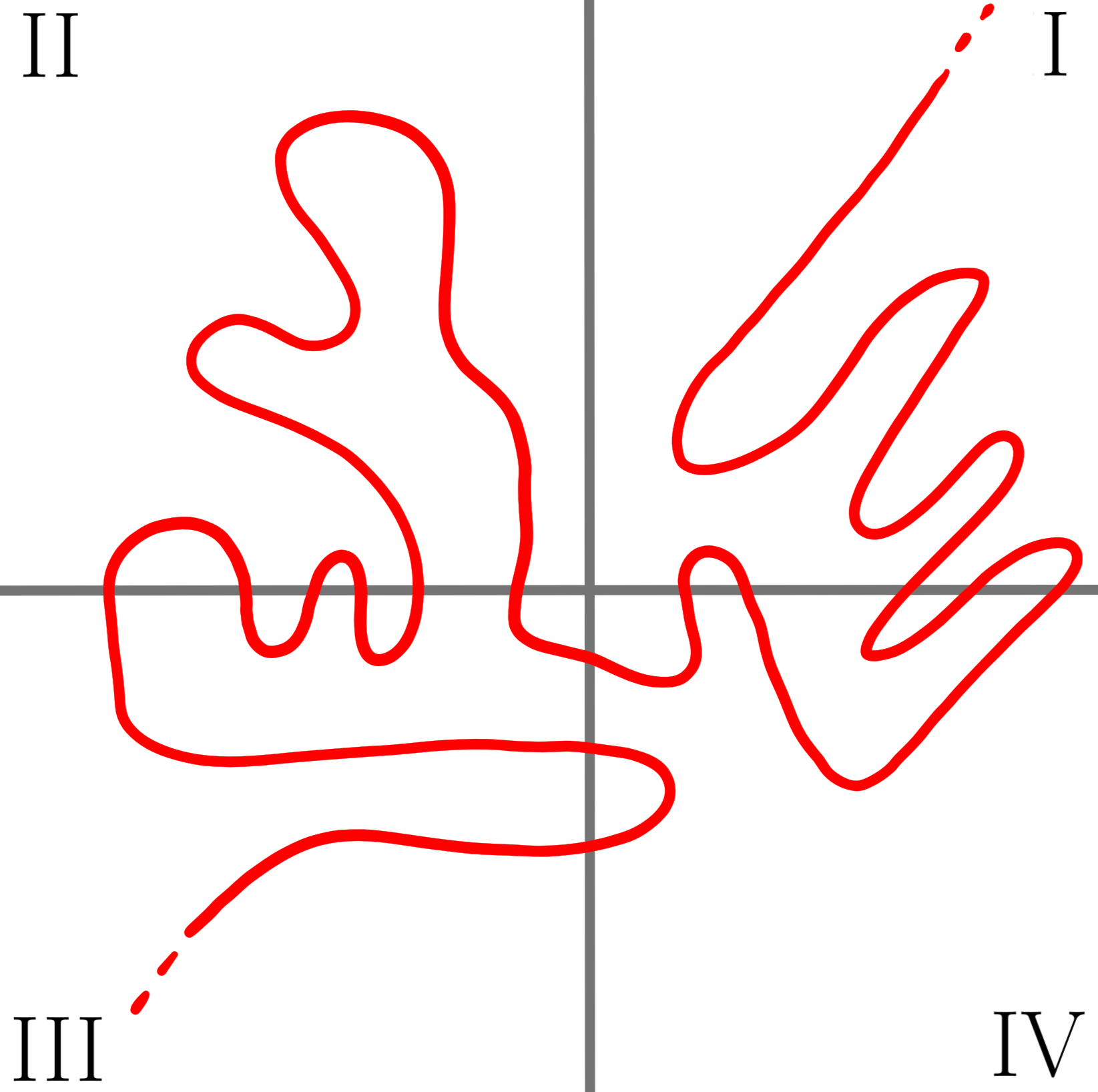}
  \caption{A win for Red}
  \label{Figure.Red-win}
\end{wrapfigure}
And indeed, this informal idea leads directly to a perfectly precise winning criterion, what we shall call the \emph{standard winning condition}, defined as follows. Namely, Red wins a play of infinite Hex if there is a bi-infinite connected chain of red hexagons, a $\mathbb{Z}$-chain of adjacent red hexagons, which on its positive end converges to $(\infty,\infty)$ and on its negative end converges to $(-\infty,-\infty)$. That is, for any choice of center in the hexagonal plane, if one draws vertical and horizontal axis lines at that point, then the positive part of the $\mathbb{Z}$-chain eventually enters quadrant I and stays within it, and the negative part of the $\mathbb{Z}$-chain eventually enters quadrant III and stays within it. And similarly for Blue, but using quadrants II and IV. If there is a $\mathbb{Z}$-chain fulfilling this winning condition, then there must be one that is non-self-crossing, consisting entirely of distinct adjacent tiles of that color, since any tile would be revisited at most finitely often and we can in each case simply omit the intermediate finite steps.

An equivalent formulation of the standard winning condition avoids the need to speak of a center point. Namely, Red wins a play of infinite Hex, if there is a connected $\mathbb{Z}$-chain of adjacent red hexagons such that (i) for every vertical line in the plane, the red chain touches it at most finitely many times, with the positive part ending strictly on the right side and negative part on the left; and (ii) for
\begin{wrapfigure}[12]{l}{.22\textwidth}
\includegraphics[width=.2\textwidth]{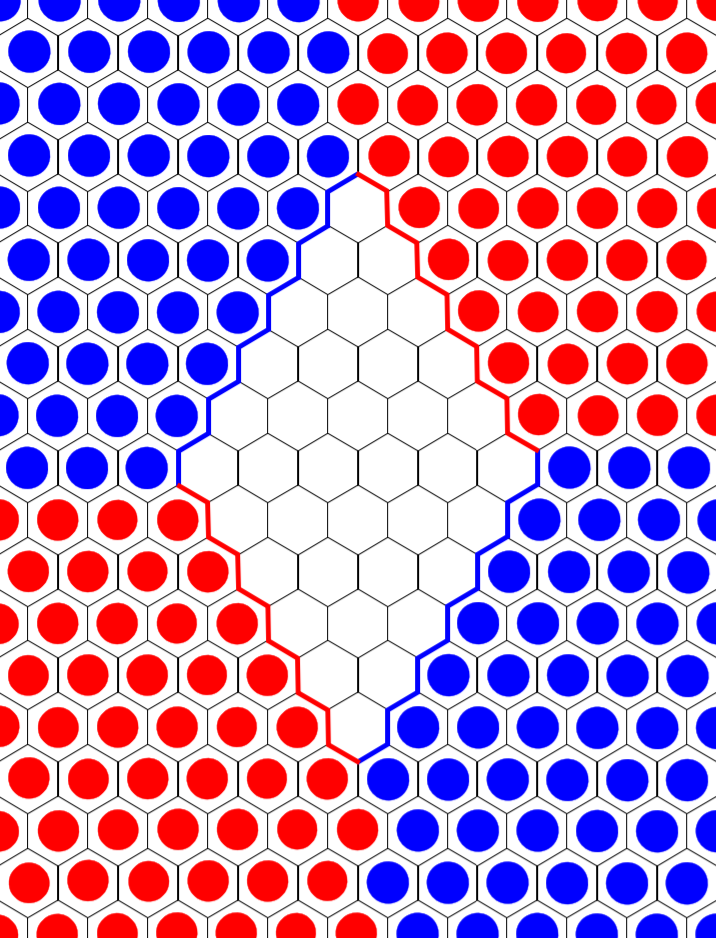}\hfill
\captionsetup{style=leftside,font=footnotesize}
\caption{Finite Hex as infinite Hex}
\label{Figure.Finite-as-infinite}
\end{wrapfigure}
any horizontal line, the $\mathbb{Z}$-chain touches it at most finitely many times, ultimately passing from below, on the negative end of the $\mathbb{Z}$-chain, to above, on the positive end. Succinctly, Red wins if there is a $\mathbb{Z}$-chain of adjacent red hexagons, which crosses every vertical line ultimately from left to right (that is, except for at most finitely many recrossings), and every horizontal line ultimately from below to above. And similarly for Blue in the other diagonal direction.

Any instance of finite Hex on an $n\times n$ rhombus board can be seen as an instance of infinite Hex simply by filling in the four quadrants, as indicated in Figure~\ref{Figure.Finite-as-infinite}. Winning on the finite board is the same as winning on the infinite board and conversely. 

\subsection{Positions with no winner}\label{Section.no-winner}
Having thus specified the winning condition precisely, let us mention several different kinds of troublesome positions that perhaps help to motivate it, which also pose certain challenges for what might be considered promising alternative winning conditions. These examples also show that the infinite Hex analogue of Theorem~\ref{Theorem.Finite-hex} statement (1) does not generally hold.

\enlargethispage{15pt}
\begin{wrapfigure}[9]{r}{.36\textwidth}
\hfill
\includegraphics[width=.33\textwidth]{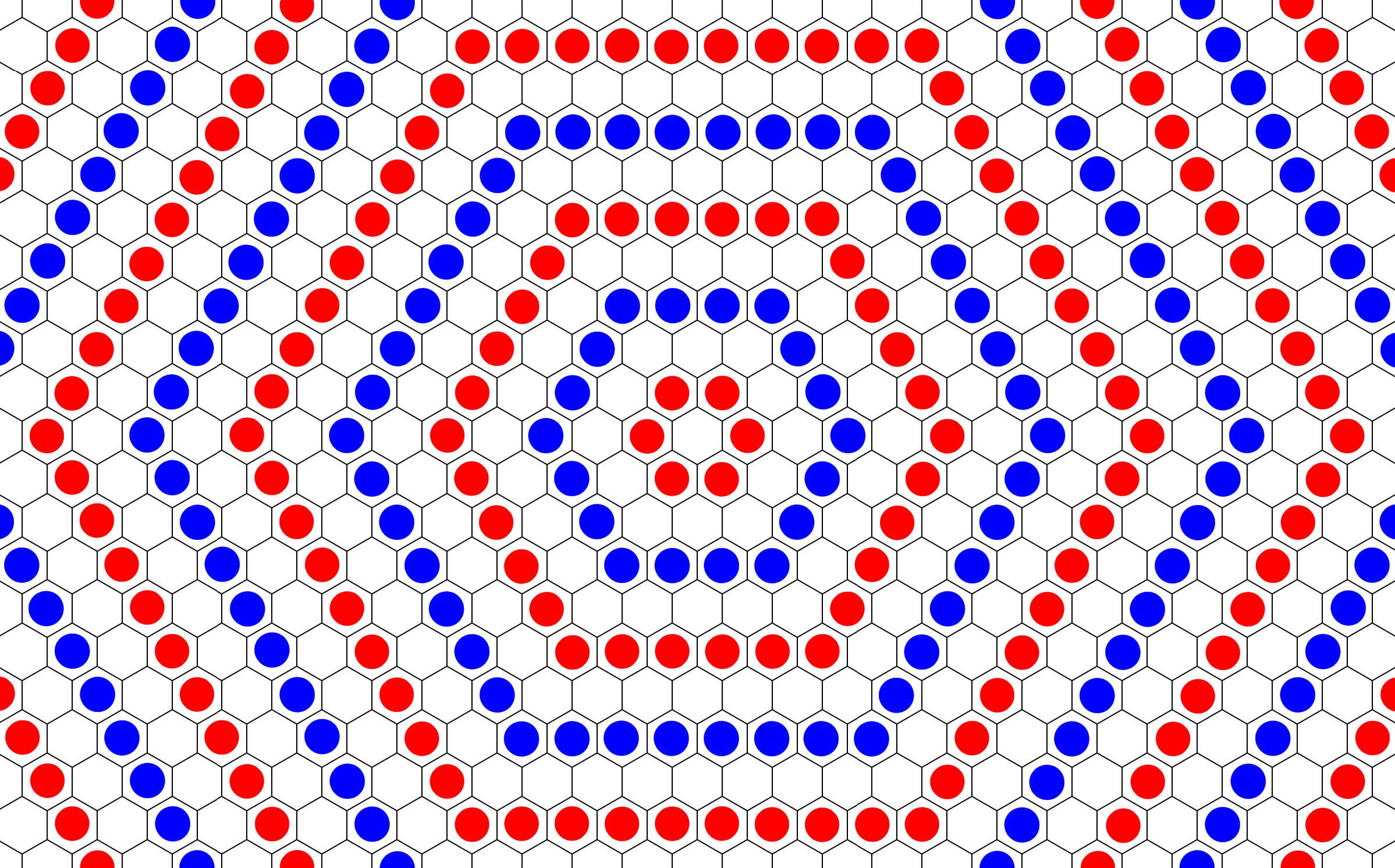}
\captionsetup{style=rightside,font=footnotesize}
\caption{Bullseye}
\label{Figure.Nested-circles}
\end{wrapfigure}
Consider first a position posing an immediate difficulty for the infinitary analogue of Theorem~\ref{Theorem.Finite-hex}, namely, a nested family of red and blue  hexagons, as shown in the bullseye position of  Figure~\ref{Figure.Nested-circles}. For such a game play, since all its monochromatic connected components are finite, it would seem that neither Red nor Blue has succeeded in connecting their ends of the board at infinity, and so we would not want to say that either player has won (and this would remain true no matter how the empty white tiles were completed). And since such a coloring of the plane admits no unbounded $\mathbb{Z}$-chains of either color, neither player fulfills the standard winning condition.\goodbreak

\begin{wrapfigure}{l}{.44\textwidth}
\includegraphics[width=.4\textwidth]{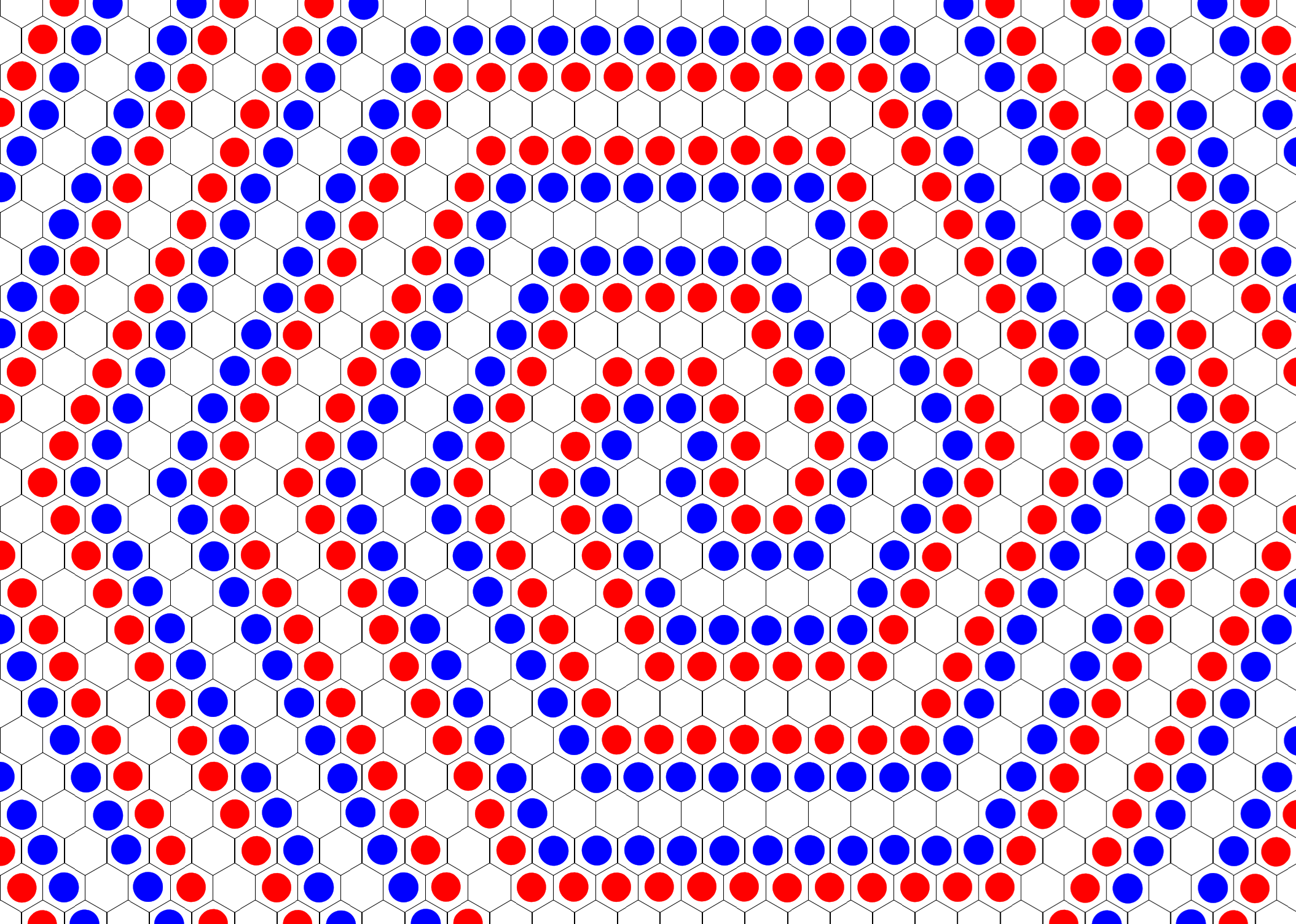}\hfill
\captionsetup{style=leftside,font=footnotesize}
\caption{Spiral paths, no winner}
\label{Figure.Double-spiral}
\end{wrapfigure}
Consider next the double-spiral coloring shown in Figure~\ref{Figure.Double-spiral}. Intuitively, for this pattern neither Red nor Blue seems to have connected their sides of the board at infinity, and so we would seem to want not to award this as a win for either player (and this would remain true no matter how the white tiles were completed). Furthermore, in light of the fundamental symmetry of the position, we certainly would not want to award this as a win for one player rather than the other. This is a case, therefore, for which the standard winning condition seems to get the right result; neither Red nor Blue here meets the standard winning condition---the only available infinite paths for Red and Blue spiral unboundedly in every direction, crossing every horizontal and vertical line infinitely many times.

\begin{wrapfigure}{r}{.33\textwidth}\hfill
\includegraphics[width=.3\textwidth]{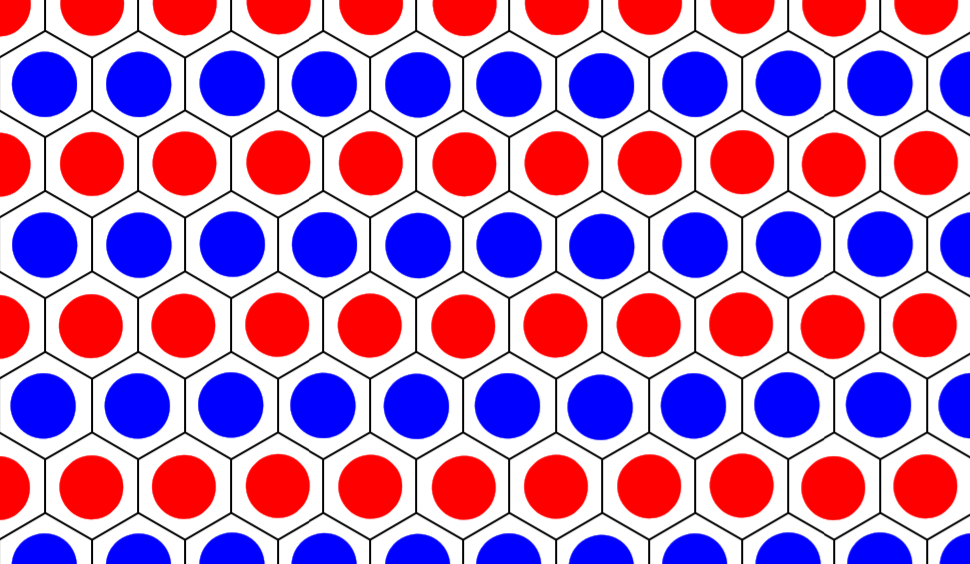}
\captionsetup{style=rightside,font=footnotesize}
\caption{Stripes}
\label{Figure.Horizontal-stripes}
\end{wrapfigure}
Another difficult case is posed simply by a system of red/blue horizontal stripes, as in Figure~\ref{Figure.Horizontal-stripes}, and a similar situation would arise alternatively with (somewhat wiggly) vertical stripes. We should not want to count these as winning for either player, we argue, since the inherent translational symmetry of the position would seem to require us also to count them as winning simultaneously for the other player. But it seems desirable that a winning condition should lead always to at most one winner. For this reason, we are inclined to classify these cases as draws, with no winner, rather than as an outcome with two winners, although there may be a certain irrelevance in that distinction. Meanwhile, the standard winning condition does indeed count this game result as a draw. 

\begin{wrapfigure}{l}{.4\textwidth}
\includegraphics[width=.35\textwidth]{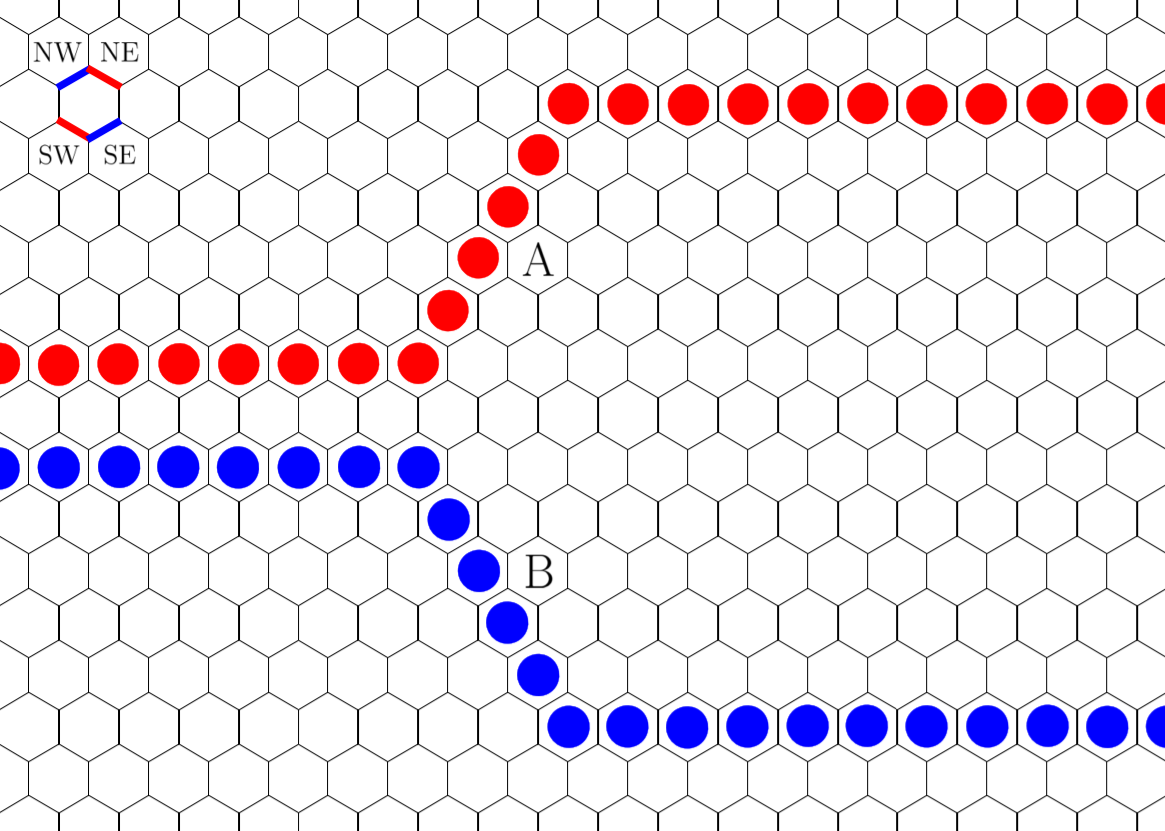}\hfill
\captionsetup{style=leftside,font=footnotesize}
\caption{Bounded paths}
\label{Figure.Tans}
\end{wrapfigure}
Here in Figure~\ref{Figure.Tans} is a more serious kind of case, having to do with the homogeneity of the hexagonal tiles. Specifically, in this position we have an infinite red path resembling a piecewise linear approximation to the graph of the function $y=\arctan x$, proceeding infinitely from the left, and then arcing upward and exiting at right, with horizontal asymptotes at each end. Similarly, a blue path proceeds infinitely from the left, arcing downward and then exiting at right. Notice that the red path proceeds from quadrant III ultimately into quadrant I for the axes using the point $A$ as origin; and the blue path similarly moves from quadrant II to quadrant IV for the axes determined by origin point $B$. Neither of these paths, however, fulfills the standard winning condition, because they are both bounded in the vertical direction, and consequently not winning with respect to all alternative choices of center point, a requirement of the condition. In our view, neither of these paths should entitle its player to win, precisely because in each case the existence of such a curve is compatible with the existence of a similar such curve for the opponent, and we do not want to allow that both players might win a given play. While these curves are not themselves winning, nevertheless they do serve as defensive obstacle configurations preventing the opponent from winning. This kind of situation therefore motivates the standard winning condition by requiring each player to make the crossing of every horizontal and vertical line, not just those associated with a fixed center point. 

One could counterpropose, of course, that a  winning condition might be defined with respect to a given center point, fixed once and for all at the beginning of the game. That is, Red wins a play of the game according to the \emph{fixed-origin winning condition}, if there is a non-self-crossing red $\mathbb{Z}$-chain moving from quadrant III to quadrant I with respect to the axes system determined by the fixed origin point; and similarly for Blue moving from quadrant IV to quadrant II. The argument of Theorem~\ref{Theorem.One-player-wins} will show that for any fixed origin point, it is impossible that both players win a given play with respect to that point, i.e.~the fixed-origin winning condition does not allow both players to win.

The position of Figure~\ref{Figure.Tans} would not necessarily pose a problem for the fixed-origin winning rule---Red would win if the origin was point A, and Blue would win if the origin was point B. But precisely because these outcomes differ, the fixed-origin winning condition violates the inherent homogeneity of tiles on an empty Hex board. Namely, there is something to the idea that at the start of play on an empty board, all tiles look alike---the board after all admits translational symmetries moving any hex to any other hex, while preserving the directions at infinity. It would therefore seem desirable for a winning condition to respect this homogeneity of the hex tiles; it should be translation invariant. But the fixed-origin winning condition does not respect homogeneity and is not translation invariant, as Figure~\ref{Figure.Tans} shows, since Red wins with origin A and Blue wins with origin~B.

A somewhat weaker winning criterion would be the \emph{some-origin} winning condition, by which a player wins a play of the game if they win according to the fixed-origin condition for some choice of origin. This winning condition suffers from the fact that both players can win, as just shown in Figure~\ref{Figure.Tans}.

\enlargethispage{20pt}
Here in Figure~\ref{Figure.Double-prongs} is a more confounding case, since it shows a winding red path that in some sense does seem to move from lower left at infinity to upper right at infinity. This path, however, does not fulfill the standard winning condition, because it visits every horizontal line infinitely often. Nevertheless, in our view
\begin{wrapfigure}{r}{.38\textwidth}
\hfill
\includegraphics[width=.35\textwidth]{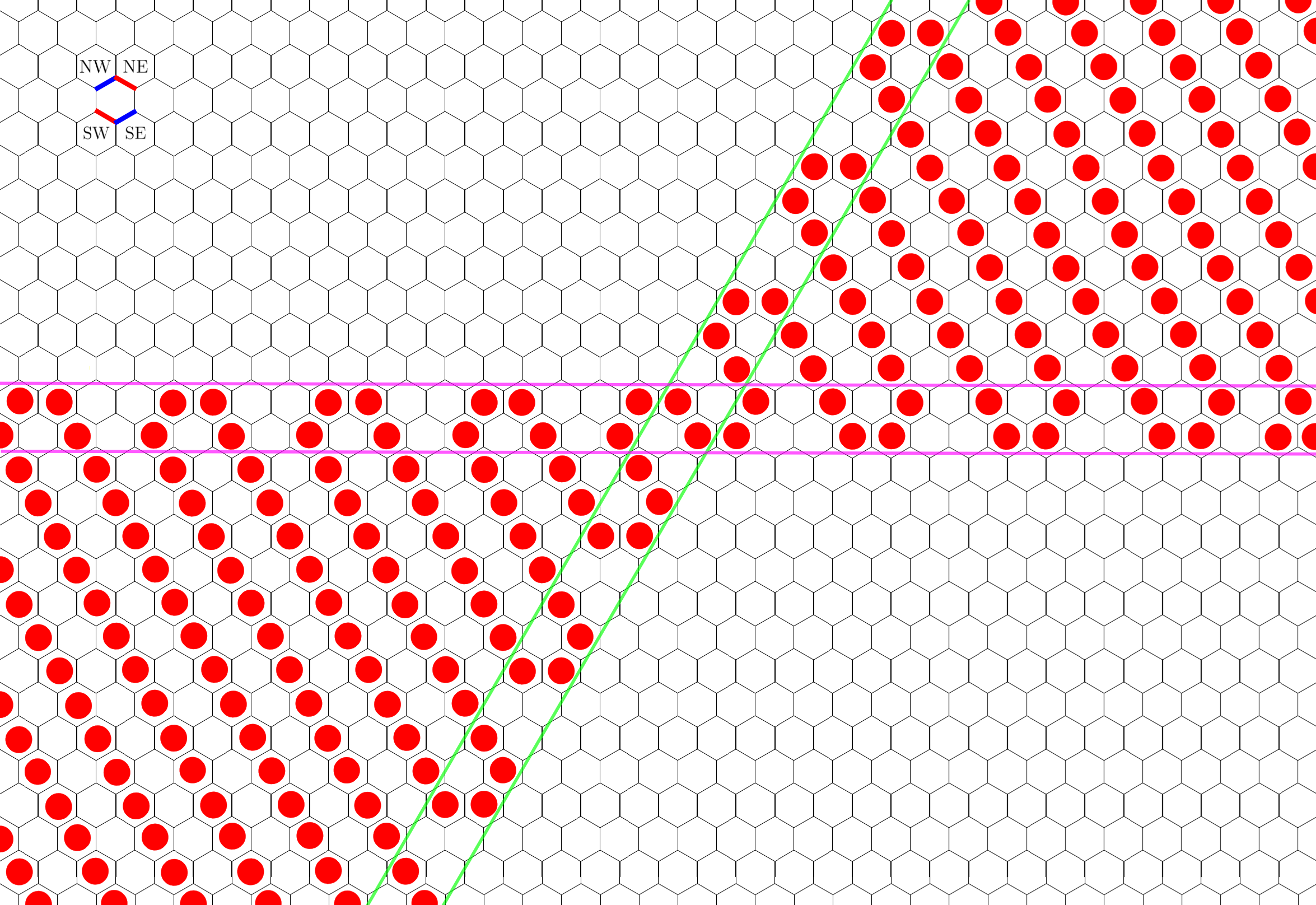}
\captionsetup{style=rightside,font=footnotesize}
\caption{A losing red path}
\label{Figure.Double-prongs}
\end{wrapfigure}
the standard winning condition has got the right result here, in light of the fact that the path keeps revisiting the main horizontal line as it proceeds, never quite definitively departing from the purple center lines, or indeed from any horizontal line. It therefore does not fully succeed in departing ``to infinity'' on either end of its journey, but rather keeps revisiting the center, and we are content to count it as not winning. We do admit, however, that alternative rules might want to allow such a path as this as winning; but ultimately we prefer the natural simplicity of the standard winning condition we have stated.\goodbreak

\begin{wrapfigure}{l}{.44\textwidth}
\includegraphics[width=.4\textwidth]{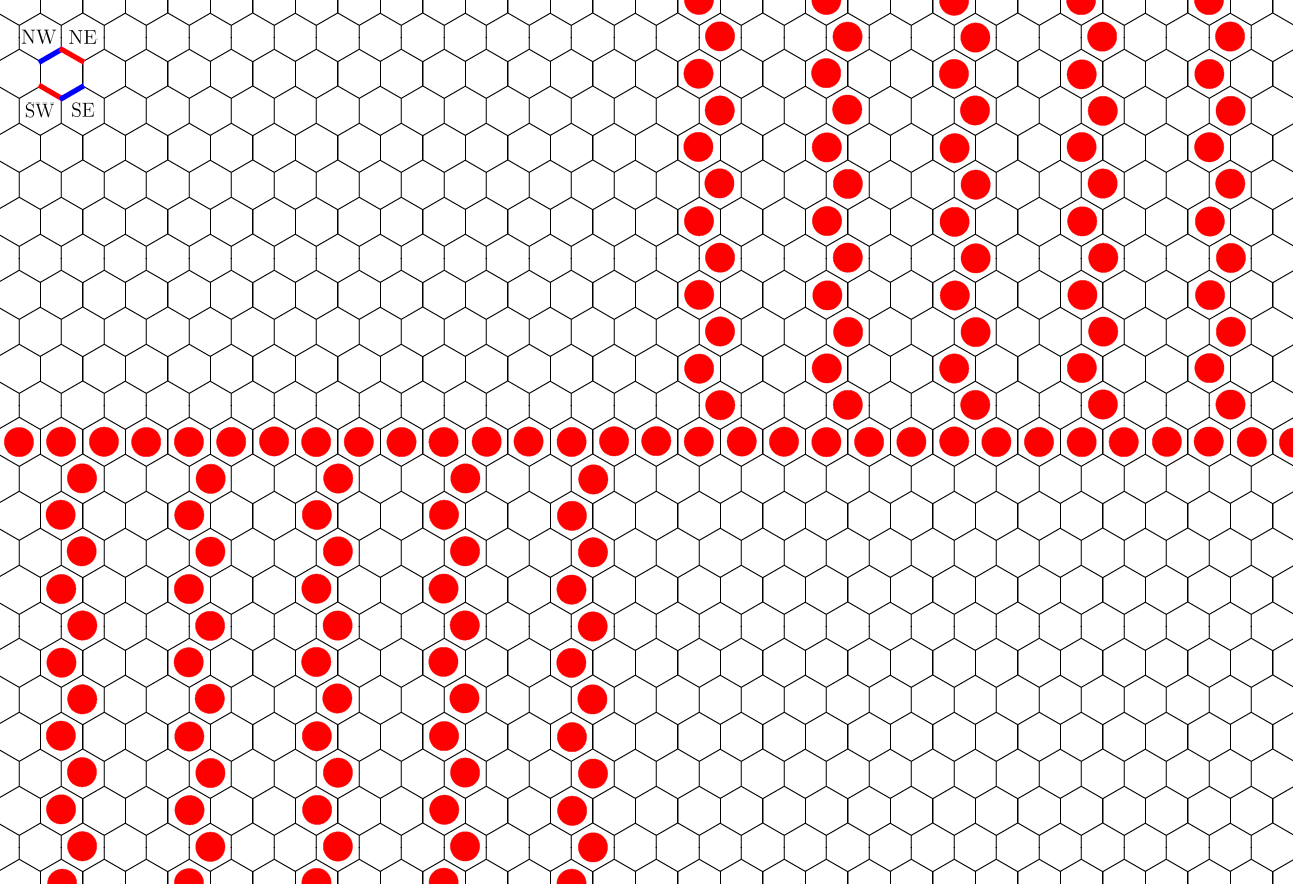}\hfill
\captionsetup{style=leftside,font=footnotesize}
\caption{Infinite double comb}
\label{Figure.Infinite-double-comb}
\end{wrapfigure}
Another example would be the double comb of Figure~\ref{Figure.Infinite-double-comb}, consisting of a bi-infinite horizontal red line, with infinitely many infinite vertical red tines coming off of it. For any center point, Red will win with this shape on any rhombus large enough to encompass enough of the connecting center line. But the position does not fulfill the standard winning condition, since if a  $\mathbb{Z}$-chain of red stones touches the center red line only finitely often, then it must depart on each end onto specific vertical tines, which would make it horizontally bounded.

\subsection{At most one winner in infinite Hex}

The examples we have provided in Section~\ref{Section.no-winner} show senses in which the existence claim of Theorem~\ref{Theorem.Finite-hex} statement (1)---the claim that there is always at least one winner---does not hold for infinite Hex. Nevertheless, we are able to prove the infinitary analogue of the uniqueness part of the claim, that there is always at most one winner. 

\begin{theorem}\label{Theorem.One-player-wins}
In every play of infinite Hex, at most one player wins. Indeed, for any coloring of the infinite hexagonal plane, at most one player fulfills the standard winning condition.
\end{theorem}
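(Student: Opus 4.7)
The plan is to argue by contradiction using a Jordan-curve-theorem separation. Suppose some coloring admits both a winning red $\mathbb{Z}$-chain $R$ and a winning blue $\mathbb{Z}$-chain $B$ for the standard winning condition; by the observation preceding the statement we may take both to be non-self-crossing, and since no hexagon is both red and blue they share no hex.

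First I would realize each chain as a piecewise-linear proper arc in $\mathbb{R}^2$, writing $\gamma_R$ and $\gamma_B$ for the curves obtained by joining the centers of consecutive hexagons by straight line segments. These segments are edges of the triangular lattice dual to the hexagonal tiling; since distinct edges of that lattice in its natural rectilinear embedding meet only at shared vertices (the hex centers), the curves $\gamma_R$ and $\gamma_B$ can intersect only at a shared hexagon, which does not exist. Hence $\gamma_R$ and $\gamma_B$ are disjoint simple piecewise-linear curves in $\mathbb{R}^2$. The standard winning condition makes each arc proper with specific behavior at its two ends: for every real $c$ the positive end of $R$ eventually has $x>c$ and $y>c$, so $x,y\to+\infty$ on the positive end of $\gamma_R$ and $x,y\to-\infty$ on the negative end; symmetrically, the two ends of $\gamma_B$ head to $(-\infty,+\infty)$ and $(+\infty,-\infty)$.

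Next I would invoke the Jordan curve theorem for proper arcs: the complement $\mathbb{R}^2\setminus\gamma_R$ has exactly two connected components $U$ and $V$. Since $\gamma_B$ is connected and disjoint from $\gamma_R$, it lies in one of them, say $U$. I would then distinguish $U$ and $V$ by an explicit parity invariant: for $p=(x_0,y_0)\notin\gamma_R$, let $\nu(p)\in\mathbb{Z}/2$ denote the parity of the number of intersections of the upward vertical ray from $p$ with $\gamma_R$. A standard sliding argument shows $\nu$ is locally constant on $\mathbb{R}^2\setminus\gamma_R$ and hence constant on each component. By the standard winning condition $\gamma_R$ meets every vertical line in finitely many points, and in fact in an odd number of transverse crossings, since the positive end lies eventually to the right and the negative end eventually to the left of that line. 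For $p$ sufficiently deep in the NW region the upward ray misses every crossing of $\gamma_R$ with $x=x_0$ (such crossings occur only on the negative tail of $\gamma_R$ and so have uniformly bounded $y$-values), giving $\nu(p)=0$; symmetrically $\nu(p)=1$ for $p$ deep in the SE region. The NW and SE regions therefore lie in different components. But $\gamma_B$ has points arbitrarily deep in the NW (on its positive end) and in the SE (on its negative end), so $\gamma_B$ meets both components of $\mathbb{R}^2\setminus\gamma_R$, contradicting $\gamma_B\subseteq U$.

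The main obstacle will be verifying the local constancy of $\nu$ on $\mathbb{R}^2\setminus\gamma_R$, a standard ingredient in direct proofs of the Jordan curve theorem. For the polygonal curve $\gamma_R$ this is handled by restricting to points whose $x$-coordinate avoids the $x$-coordinates of the finitely relevant vertices of $\gamma_R$ and by tracking the pairwise creation and destruction of crossings as the ray slides continuously in the complement. The ``odd crossings'' claim likewise benefits from a mild general-position assumption that all intersections of $\gamma_R$ with the chosen vertical line are transverse at non-vertex points, after which the net signed crossing count equals $+1$ and so the unsigned count is odd. These general-position adjustments are benign and do not interfere with the disjointness of $\gamma_R$ and $\gamma_B$.
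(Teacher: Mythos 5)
Your proof is correct, but it takes a genuinely different route from the paper. The paper argues by reduction to the finite Hex Theorem (Theorem~\ref{Theorem.Finite-hex}): it fixes a center, notes that each winning $\mathbb{Z}$-chain touches the two axes only finitely often, truncates to a rhombus large enough to contain all those touchings, and observes that both players would then have won finite Hex on that rhombus---contradicting uniqueness there. You instead re-prove the topological core directly in the plane: realizing both chains as disjoint properly embedded piecewise-linear arcs through hex centers, invoking the Jordan separation theorem for proper arcs, and distinguishing the two complementary components of $\gamma_R$ by the parity of upward-ray crossings, then noting that $\gamma_B$ must meet both components. Each approach buys something: the paper's truncation argument is short because the hard topology was already paid for in Theorem~\ref{Theorem.Finite-hex}, and crucially it yields Corollary~\ref{Corollary.Win-implies-win-on-sufficently-large-finite} (a win on all sufficiently large centered rhombuses) as a by-product used later for the finite-boards winning condition, which your route does not provide; your argument, by contrast, is self-contained in plane topology and does not need the finite Hex apparatus at all. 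Two points in your sketch deserve explicit care but are fixable exactly as you indicate. First, properness of the arcs follows from non-self-crossing plus the fact that any bounded region contains only finitely many tiles, so each chain leaves every compact set; this is worth saying, since the Jordan theorem for arcs needs it. Second, your ``uniformly bounded $y$-values'' parenthetical is the one place the uniformity needs an argument: choosing a center $(0,c)$, the finitely many points of $\gamma_R$ outside the two eventual quadrant-tails have $|x|\le K$ for some $K$, so every crossing of $\gamma_R$ with a line $x=x_0<-K$ lies on the negative tail and has $y<c$; since Blue's positive end eventually enters $\{x<-K,\ y>c\}$ (quadrant II with respect to center $(-K,c)$), it contains points with $\nu=0$, and symmetrically for the SE end with $\nu=1$. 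With those two details filled in, and the benign general-position adjustments you describe (safe here because disjoint edge-sets of the triangular lattice are separated by a fixed positive lattice distance), the argument is complete.
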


\begin{proof}
Suppose toward contradiction that there is a coloring of the infinite Hex playing board in which both players Red and Blue have won. Fix winning $\mathbb{Z}$-chains for each player, and fix any desired center point with corresponding vertical and horizontal axes. Each player's winning $\mathbb{Z}$-chain has touched those axes only finitely many times. Consider a large rhombus centered at the fixed origin point, encompassing within its interior all of those finitely many touching points, including the relevant finite sub-chains connecting them. Since the red $\mathbb{Z}$-chain that crosses the horizontal axis ultimately from below to above crosses the vertical axis ultimately from left to right, it follows that Red has connected the opposite red sides of this rhombus---that is, Red has won this instance of finite Hex; and similarly with Blue. So the coloring on this rhombus-shaped subboard shows both players having won on the finite rhombus, contradicting Theorem~\ref{Theorem.Finite-hex}, statement~(1). So it was not possible after all for them both to have won the infinite game.
\end{proof}

\subsection{Winning on finite boards}\label{Section.Finite-boards}

What is the precise relationship between a play of infinite Hex winning on the infinite board versus winning on arbitrarily large or sufficiently large finite Hex boards? The proof of Theorem~\ref{Theorem.One-player-wins} reveals a connection.

\begin{corollary}\label{Corollary.Win-implies-win-on-sufficently-large-finite}
If a player has won a game of infinite Hex, then for every choice of center, the player has also won finite Hex with this play for all sufficiently large rhombus boards centered at that point.
\end{corollary}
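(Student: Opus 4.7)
The plan is essentially to distill and reuse the argument already carried out in the proof of Theorem~\ref{Theorem.One-player-wins}. Assume without loss of generality that Red has won the infinite play, and fix a winning monochromatic red $\mathbb{Z}$-chain $C$. Let $p$ be any chosen center with its associated horizontal and vertical axes. By the standard winning condition, $C$ touches each of these two axes only finitely many times; let $K$ be the union of those finitely many crossing tiles together with the finite sub-arcs of $C$ joining consecutive crossings in order along $C$. Then $K$ is a bounded set of red hexagons in the plane.

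Next I would let $N_p$ be any size large enough that the rhombus $R$ centered at $p$ of that size contains $K$ in its interior, and claim that Red wins finite Hex on every such rhombus $R$. The point is that once $R$ contains all axis-crossings of $C$, the portion of $C$ past its final positive crossing lies entirely in quadrant I and eventually escapes $R$ forever (since it tends to $(\infty,\infty)$); symmetrically for the negative tail and quadrant III. Let $t^+$ be the last step at which the positive tail of $C$ lies inside $R$ and $t^-$ the last step at which the negative tail does; the sub-chain of $C$ between $t^-$ and $t^+$ is then a finite connected chain of red tiles contained in $R$ whose two ends abut the boundary of $R$ inside quadrant I and quadrant III respectively. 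By the standard identification of a finite rhombus board as a subregion of the infinite board (Figure~\ref{Figure.Finite-as-infinite}), these two quadrant corners of $R$ correspond to opposite Red boundary sides of the rhombus, and so this sub-chain exhibits a red connection between Red's two sides --- a win for Red on finite Hex.

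The main thing to verify, and really the only non-routine step, is the orientation bookkeeping: that the side of $R$ bordering quadrant I genuinely is one of Red's two target sides and similarly for quadrant III, so that the diagonal connection witnesses Red's win rather than Blue's. This is a matter of matching the winning-directions-at-infinity built into the standard winning condition (quadrants I/III for Red, II/IV for Blue) with the alternating Red--Blue--Red--Blue labeling of the four sides of a centered rhombus; once this convention is fixed, the argument applies uniformly to every rhombus $R$ centered at $p$ of size at least $N_p$, giving precisely the ``sufficiently large'' conclusion of the corollary. The symmetric argument handles the case in which Blue is the infinite winner.
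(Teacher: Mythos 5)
Your proposal is correct and takes essentially the same route as the paper, whose entire proof of this corollary is the observation that the argument of Theorem~\ref{Theorem.One-player-wins}---with its arbitrary choice of center---already establishes it: the winning $\mathbb{Z}$-chain touches the two axes only finitely often, and any rhombus centered at $p$ large enough to enclose those crossings together with the connecting sub-arcs is traversed by the chain between its two red sides, exactly your argument. One small repair to your bookkeeping: take $t^{+}$ and $t^{-}$ to be the \emph{first} exits of the positive and negative tails from $R$ after the outermost axis crossings, rather than the \emph{last} steps at which the tails lie inside $R$, since with your definition the sub-chain between $t^{-}$ and $t^{+}$ may leave and re-enter $R$ through the quadrant-I or quadrant-III portion of its boundary and so is not literally contained in $R$; with the first-exit choice, the containment (using that your set $K$ already covers everything between the outermost crossings) and the boundary-abutment claims hold as you intend.
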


\begin{proof}
Since the choice of center in the proof of Theorem~\ref{Theorem.One-player-wins} was arbitrary, the proof of that theorem shows exactly this.
\end{proof}

\enlargethispage{20pt}
In light of this, let us briefly consider an alternative weaker winning condition. Namely, we define that a play of the game is a \emph{finite-boards win} for a player, if for any choice of center, the player has won on all sufficiently large rhombus boards having that center. Corollary~\ref{Corollary.Win-implies-win-on-sufficently-large-finite} shows that every win in infinite Hex is also a finite-boards win, but Theorem~\ref{Theorem.Winning-on-sufficiently-large-finite-boards} will show that the converse is not true, so the finite-boards win is a strictly weaker winning condition.\goodbreak

It follows easily from Corollary~\ref{Corollary.Win-implies-win-on-sufficently-large-finite} that if a player has achieved a finite-boards win in a play of infinite Hex, then this prevents the opponent from having won the infinite play, because if the opponent had won (according to the standard winning condition), then by Corollary~\ref{Corollary.Win-implies-win-on-sufficently-large-finite} the opponent will have also achieved a finite-boards win, and so the given player could not also have achieved this.  

The finite-boards winning condition offers a few robust features, which might make it attractive as an alternative criterion. First, the arguments we have given show that for any given coloring of the infinite Hex board with red and blue, not both players can achieve a finite-boards win---for any play there is at most one finite-boards winner. Moreover, the finite-boards winning condition is translation invariant, and consequently respects the homogeneity of the tiles on an empty board. Nevertheless we shall have good reason to prefer our standard winning condition, in light of the discussion following Theorem~\ref{Theorem.Winning-on-sufficiently-large-finite-boards}.

Notice that Figure~\ref{Figure.Double-prongs} is not a finite-boards win for Red. To see this, consider a center point above the main red path of Figure~\ref{Figure.Double-prongs}, and observe that for very large rhombuses centered on that point, Red will fail to win on every other size of rhombus, as they grow larger, since when the upper right edge of the rhombus lines up just on the empty line of cells between the red zig-zags, Red will not complete the path. So it will not be true for that position that Red has won on all sufficiently large rhombuses with that center. 

Is the converse of Corollary~\ref{Corollary.Win-implies-win-on-sufficently-large-finite} true? In other words, are the two winning conditions identical? The answer is negative. 

\begin{theorem}\label{Theorem.Winning-on-sufficiently-large-finite-boards}
 There is a play of infinite Hex that is winning with respect to the finite-boards winning condition, but not with respect to the standard winning condition.
\end{theorem}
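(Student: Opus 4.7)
The plan is to take the infinite double comb of Figure~\ref{Figure.Infinite-double-comb} as the witness play. In detail, I fix a coloring of the board in which Red's stones form a single bi-infinite horizontal row $L$ together with infinitely many bi-infinite vertical red tines, evenly spaced (say on every third column) so that distinct tines have no adjacent cells off of $L$; the remaining tiles may be colored blue arbitrarily. Since this uses only countably many stones of each color, it is achievable as an actual play of the game. It remains to verify the two promised features: that Red wins according to the finite-boards condition, but that no red $\mathbb{Z}$-chain satisfies the standard winning condition.

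For the finite-boards win, I fix an arbitrary center $c$ and consider rhombus boards $R_N$ of side length $N$ centered at $c$. For all $N$ sufficiently large, the horizontal row $L$ crosses $R_N$ all the way from its left boundary to its right boundary, and at least one vertical tine strictly to the left of $c$ and one strictly to the right of $c$ reach high and low enough to meet the top and bottom boundaries of $R_N$. The union of the segment of $L$ inside $R_N$ with two such tines forms a connected red network inside $R_N$ joining the two opposite sides of the rhombus that Red must connect, so Red wins on $R_N$. As this holds for all sufficiently large $N$ and the center $c$ was arbitrary, Red satisfies the finite-boards condition.

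For the failure of the standard winning condition, I argue by contradiction: suppose $\gamma$ is a non-self-crossing red $\mathbb{Z}$-chain whose positive end converges to $(\infty,\infty)$ and whose negative end converges to $(-\infty,-\infty)$. Since the standard condition only permits finitely many crossings of any fixed horizontal line, in particular of $L$, I may pass to an infinite positive tail $\gamma^+$ lying entirely off $L$. Every cell of $\gamma^+$ is then a red tile belonging to some vertical tine; by the spacing condition on the tines, two distinct tines have no pair of adjacent cells outside $L$, so $\gamma^+$ cannot hop between tines without first returning to $L$. Hence $\gamma^+$ is confined to a single vertical column, contradicting the requirement that it converges to $(\infty,\infty)$, which demands that it be horizontally unbounded. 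A symmetric argument handles the negative tail and completes the contradiction.

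The main obstacle is the combinatorial lemma in the second step, namely that in this coloring a connected red path not touching $L$ must stay within a single tine. This is essentially forced by choosing the horizontal gap between consecutive tines large enough that the off-backbone neighborhoods of distinct tines are disjoint; with spacing at least three columns apart in the hexagonal lattice this is immediate. Everything else is routine verification of the two winning conditions on the chosen position.
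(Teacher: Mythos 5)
Your proof is correct and is essentially the paper's own argument: the paper's displayed proof uses the Zen garden stripe position of Figure~\ref{Figure.Sufficiently-large-finite-boards}, but it explicitly remarks that the double comb of Figure~\ref{Figure.Infinite-double-comb} also proves the theorem, and your verification (finitely many touches of the backbone row force a tail of any would-be winning red $\mathbb{Z}$-chain onto a single tine, hence horizontally bounded, contradicting convergence to $(\infty,\infty)$) is exactly the analysis the paper sketches for that position. No further changes are needed.
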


\begin{wrapfigure}{r}{.44\textwidth}
\vskip-2ex\hfill
\includegraphics[width=.4\textwidth]{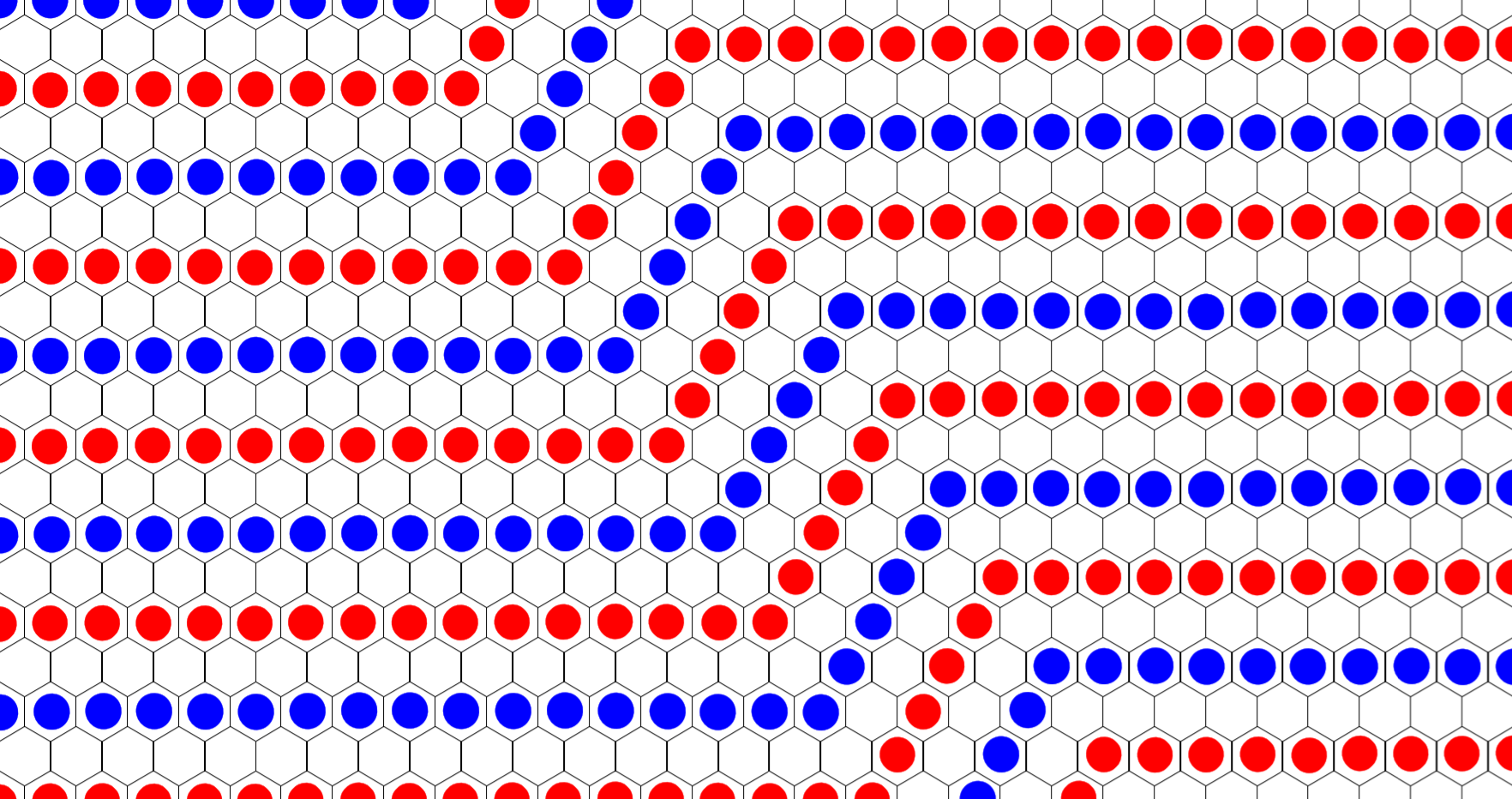}
\captionsetup{style=rightside,font=footnotesize}
\caption{Zen garden position}
\label{Figure.Sufficiently-large-finite-boards}
\end{wrapfigure}
\smallskip\noindent\emph{Proof.} 
Consider the play of infinite Hex resulting in alternating stripes of red and blue in the Zen  garden manner of Figure~\ref{Figure.Sufficiently-large-finite-boards}. For any choice of center, we can make the rhombus big enough so as to encompass the jagged part of the stripes, and in this way, we claim, Red will win, because the red line entering the rhombus just below its center line at left will traverse the rhombus, climb the jagged step, and exit just above the center line of the rhombus at right. So this will be a winning path for that rhombus. For any choice of center, all sufficiently large rhombuses centered at that point will show a win for Red in just this way. But meanwhile, the position overall is not a win for Red in infinite Hex, because no one $\mathbb{Z}$-chain is enough to fulfill the standard winning condition, as each of them is bounded vertically.  

The position shown in Figure~\ref{Figure.Infinite-double-comb} also serves to prove this theorem.
\hfill\fbox{}\medskip\goodbreak 

\enlargethispage{20pt}
Our view of the play shown in Figure~\ref{Figure.Sufficiently-large-finite-boards} is that although Red wins on all sufficiently large boards at any given center, the particular winning paths vary with the choice of center. No one of the red paths should be considered sufficient to justify a win in the infinite game, because each one of them, taken independently, is compatible as in Figure~\ref{Figure.Double-prongs} with the existence of a similar but contrary such blue path; and we do not want to allow two winners for the same play. Ultimately, in our view, each of the red paths of Figure~\ref{Figure.Sufficiently-large-finite-boards} serves a defensive role, sufficient to prevent a Blue win, but insufficient on its own for a Red win.\goodbreak

\subsection{Transfinite play}

In ordinary finite Hex, players naturally proceed with the game if necessary until every hex tile is occupied by a stone. Meanwhile, with infinite Hex, there are infinite plays of the game, plays for which infinitely many stones have been placed, yet huge regions of the board remain nevertheless untouched---the game may remain completely unsettled. For example, perhaps Red plays a half-chain reaching from the origin out to infinity in quadrant I, while Blue plays a similar half-chain in quadrant II, leaving the rest of the board completely empty even after infinitely many stones are placed. In such a case, it would seem natural simply to allow the players to continue playing---let the game continue into  transfinite ordinal time. That is, if the moves at all finite stages have been completed and there remain empty hex tiles, then the players may continue placing stones at infinite ordinal stages $\omega$, $\omega+1$, $\omega+2$, and so on. At some countable ordinal stage of play, the board will become completely filled and the game will be over. 

In \emph{standard play}, the game of infinite Hex is concluded as a win, loss, or draw after $\omega$ many moves, whether or not the board is filled with stones. With \emph{transfinite play}, in contrast, game play proceeds if necessary into transfinite ordinal time. In this latter case, one must specify as a matter for the rules whose turn it is at the limit ordinal stages of play---who gets to play at stage $\omega$? At $\omega\cdot 2$? By default we might specify that Red always plays first and also first at every limit stage; but other rules are also sensible. Perhaps one might want to allow Blue to play at limit stages in compensation for Red's advantage of the initial first move. Or perhaps one might say that Red plays first and at all compound limits, while Blue plays at simple limits. But in fact our title result (Theorem~\ref{Theorem.main}) shows that when play begins on an empty board, then both players have drawing strategies, regardless of who plays first and who plays at limit stages.

\subsection{Complexity of the standard winning condition}

What is the complexity of the standard winning condition in infinite Hex? It seems natural to try to place the game into the complexity hierarchies of descriptive set theory. We had been unsure of the answer exactly and inquired about this on MathOverflow.

\begin{question}[\cite{Hamkins2022.MO435565:What-is-the-complexity-of-the-winning-condition-in-infinite-hex}]\label{Question.Complexity-winning-condition}
What is the descriptive-set-theoretic complexity of the game of infinite Hex? 
\end{question}

Specifically, what is the complexity of the set of partial colorings of the infinite Hex board fulfilling the standard winning condition for Red? There is a clear upper bound of analytic complexity $\Sigma^1_1$, of course, because a given coloring exhibits a Red win if and only if there is a connected $\mathbb{Z}$-chain of red stones fulfilling the convergence requirements of the standard winning condition. This is an existential quantifier to assert the existence of the red $\mathbb{Z}$-chain, and to assert that a given chain achieves the winning convergence property is arithmetic, making the complexity of the winning condition $\Sigma^1_1$ altogether. 

One might have hoped to prove that this is optimal by showing that the winning condition is complete for $\Sigma^1_1$. Since the ill-foundedness of countable trees is known to be $\Sigma^1_1$ complete, it would suffice to embed trees into the Hex positions in such a way that they are winning if and only if they are ill-founded. But unfortunately, this idea doesn't pan out---there just isn't enough room in the plane to embed the trees successfully. The main difficulty is that one must embed trees that are infinitely branching, and the infinitely branching nodes will be modeled with a long Hex line having infinitely many offshoots; but one must ensure that the branching line itself is not a winning chain, and so it must in effect return infinitely often to a given horizontal or vertical, and this means that there  won't be room to handle the higher branching nodes of the various offshoots---it just doesn't fit.

Is infinite Hex a Borel game? We conjecture that it is. To show this, it would suffice to provide a real existential reason also for \emph{failures} of winning. For example, perhaps Red fails to win if and only if there is a certain kind of obstacle in the blue and uncolored stones. This would show that the failure to win is also $\Sigma^1_1$, making the winning condition $\Delta^1_1$ and hence Borel. 

In our view, the complexity of the game is a vital consideration---especially the distinction between $\Sigma^1_1$ and Borel---since this has fundamental strategic consequences in the game-theoretic analysis. If the winning condition is Borel, after all, then the game proceeding from a given position will be determined (either one of the players will have a winning strategy or both have drawing strategies) as a consequence of Borel determinacy, which is provable in Zermelo-Fraenkel set theory ZFC. But if the winning condition is truly $\Sigma^1_1$, in contrast, then it wouldn't be as clear that all such games are determined, since that level of determinacy is not provable in ZFC (although it is provable from large cardinals). Can we prove that all positions in infinite Hex are determined, having either a winning strategy for one of the players or drawing strategies for both players?

Is infinite Hex an arithmetic game? We had speculated that it may be, and for this reason we are now pleased to see that Ilkka Törmä \cite{Torma2023.MO450529:What-is-complexity-of-winning-condition-of-infinite-hex} has just recently announced a proof on MathOverflow of exacty that: the standard winning condition of infinite Hex is arithmetic, with complexity at most $\Sigma^0_7$. This seems to settle question \ref{Question.Complexity-winning-condition} and the other questions we have asked in this section.


\section{Infinite Hex is a draw}

We now come to the title result, namely, infinite Hex is a draw.

\begin{theorem}\label{Theorem.main}
Infinite Hex is a draw---neither player has a winning strategy; both players have drawing strategies.
\end{theorem}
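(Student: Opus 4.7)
The plan is to prove the two conclusions separately, exploiting the reflective symmetry of the infinite hexagonal lattice. Fix a reflection $R$ of the plane---say across a horizontal axis through a row of hex centers---which is a lattice automorphism and swaps Red's winning direction (SW-to-NE) with Blue's (NW-to-SE).

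For the first conclusion, that neither player has a winning strategy, I would run Nash's strategy-stealing argument in both directions, using the monotonicity of Hex: extra stones of one's own color never hurt. Suppose Red had a winning strategy $\sigma$. Blue steals it as follows: she plays an arbitrary throwaway stone first, and thereafter plays according to the reflected strategy $R\sigma R$, treating Red's subsequent moves as the opponent's moves in an imagined game in which Blue is the first player aiming at Blue's goal. The initial configuration in the imagined game includes two extras---Blue's throwaway and Red's initial move---but by monotonicity these do not prevent $R\sigma R$ from being winning for the imagined first player. Hence Blue obtains a winning chain in the imagined game, and since the imagined and real boards agree as configurations, this is also a winning chain for Blue in reality. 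Simultaneously Red, playing $\sigma$, has a winning chain by hypothesis, contradicting Theorem~\ref{Theorem.One-player-wins}. The symmetric argument---with Red stealing a hypothetical Blue winning strategy via a throwaway first move---rules out a winning strategy for Blue.

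For the second conclusion, that both players have drawing strategies, I propose a mirror strategy for Red based on $R$: Red plays her first move on the axis of reflection, and for each subsequent Blue move $b$, Red plays $R(b)$ if that hex is empty, else plays an empty hex on the axis adjacent to $b$. Suppose toward contradiction that Blue achieves a winning chain $C_B$ against this strategy. By the standard winning condition, $C_B$ meets the axis only finitely often. A case analysis on the temporal order of Blue's moves shows that for every off-axis hex $b \in C_B$, the cell $R(b)$ is red---the alternative that $R(b)$ is blue is refuted because then Red would earlier have mirrored $R(b)$ back to $b$, making $b$ a red cell and contradicting $b \in C_B$. Hence $R(C_B)$ is a bi-infinite chain in Red's winning direction consisting entirely of red hexes except for the finitely many on-axis blue hexes of $C_B$ themselves. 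Using Red's on-axis throwaway stones to construct local red detours around each blue obstruction then yields a purely red winning chain, contradicting Theorem~\ref{Theorem.One-player-wins}. A symmetric construction gives Blue's drawing strategy.

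The principal obstacle is the bridging step in the second argument: around a blue axis hex $b_0$ in $C_B$, the two red neighbors of $b_0$ that belong to $R(C_B)$ lie on opposite sides of the axis and share no common neighbor other than $b_0$ itself, so a single on-axis throwaway placed next to $b_0$ does not suffice to bridge. The drawing strategy must therefore arrange throwaways on both sides of each axis cell across multiple turns, exploiting the finiteness of $C_B$'s axis crossings to complete each detour. An alternative, cleaner route---available in light of Ilkka Törmä's recent $\Sigma^0_7$ complexity bound on the winning condition---is to invoke Martin's Borel determinacy theorem: once neither player has a winning strategy, Borel determinacy immediately yields drawing strategies for both players, bypassing the bridging construction altogether.
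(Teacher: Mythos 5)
There are two genuine gaps here, one in each half, and neither is cosmetic.

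The first half fails because strategy stealing in Hex works in only one direction. Monotonicity says that extra stones \emph{of your own color} never hurt, which is why the first player can steal a hypothetical second-player strategy: the invented extra stone is the thief's own. In your reverse steal, Blue's simulation must absorb Red's real opening move as an extra stone \emph{of the opponent's color}, and monotonicity says nothing about that; concretely, the reflected strategy $R\sigma R$ may at some point direct Blue to play on the very cell Red's unabsorbed stone occupies, and then the simulation collapses with no throwaway fix available (when your own extra stone blocks the prescribed move you may play elsewhere freely; when an opponent stone blocks it, the strategy simply has no valid continuation). A decisive sanity check: your argument uses only symmetry and monotonicity and nowhere uses infiniteness of the board, so verbatim it would prove that \emph{finite} Hex is a draw, contradicting Theorem~\ref{Theorem.Finite-hex}\,(2), where the first player provably wins. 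The paper accordingly steals only in the valid direction---to show the \emph{second} player has no winning strategy, and later to transfer a drawing strategy from Blue to Red---and obtains ``Red has no winning strategy'' not by stealing but as an immediate consequence of Blue's drawing strategy.

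The second half contains the gap you yourself flag, and it is not a patchable detail but the crux. Your mirror is a genuine reflection with fixed on-axis cells, which is exactly the classically failing form of mirroring in Hex: the crossing points of Blue's hypothetical winning chain are revealed only in the limit, every Red turn is consumed by the mirroring rule, and Blue can preemptively occupy the detour cells your bridging construction would need, so ``arrange throwaways on both sides across multiple turns'' is not actually a strategy. The paper's proof is engineered to avoid this entirely: it pairs tiles by reflection \emph{plus a half-tile shift}, a fixed-point-free pairing that admittedly does \emph{not} preserve adjacency (mirrored chains can tear), and therefore never attempts to transport the opponent's chain into a winning chain of one's own. Instead it proves a topological trapping statement: under the shifted pairing, the only way two contiguous red stones can traverse the mirror line is in a downward-to-the-right step, so after the winning red $\mathbb{Z}$-chain first crosses the line, its continuation is confined to a right half-plane by the red path above and its blue mirror image below, and hence can never depart to infinity at the lower left as the standard winning condition requires---no bridging, and no appeal to Theorem~\ref{Theorem.One-player-wins}, is needed. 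Finally, the Borel-determinacy fallback does not rescue the proposal: applied to the two-outcome game ``Red wins / Red does not win,'' determinacy yields only the dichotomy that Red has a winning strategy or Blue has a drawing strategy, so to extract Blue's drawing strategy you must still rule out a Red winning strategy---which is precisely what your invalid reverse steal was supposed to supply and what the paper's mirroring argument actually supplies (the valid steal plus determinacy gives only the easy half, a drawing strategy for Red).
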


\begin{wrapfigure}{r}{.5\textwidth}    
\vskip-2ex\hfill
\includegraphics[width=.45\textwidth]{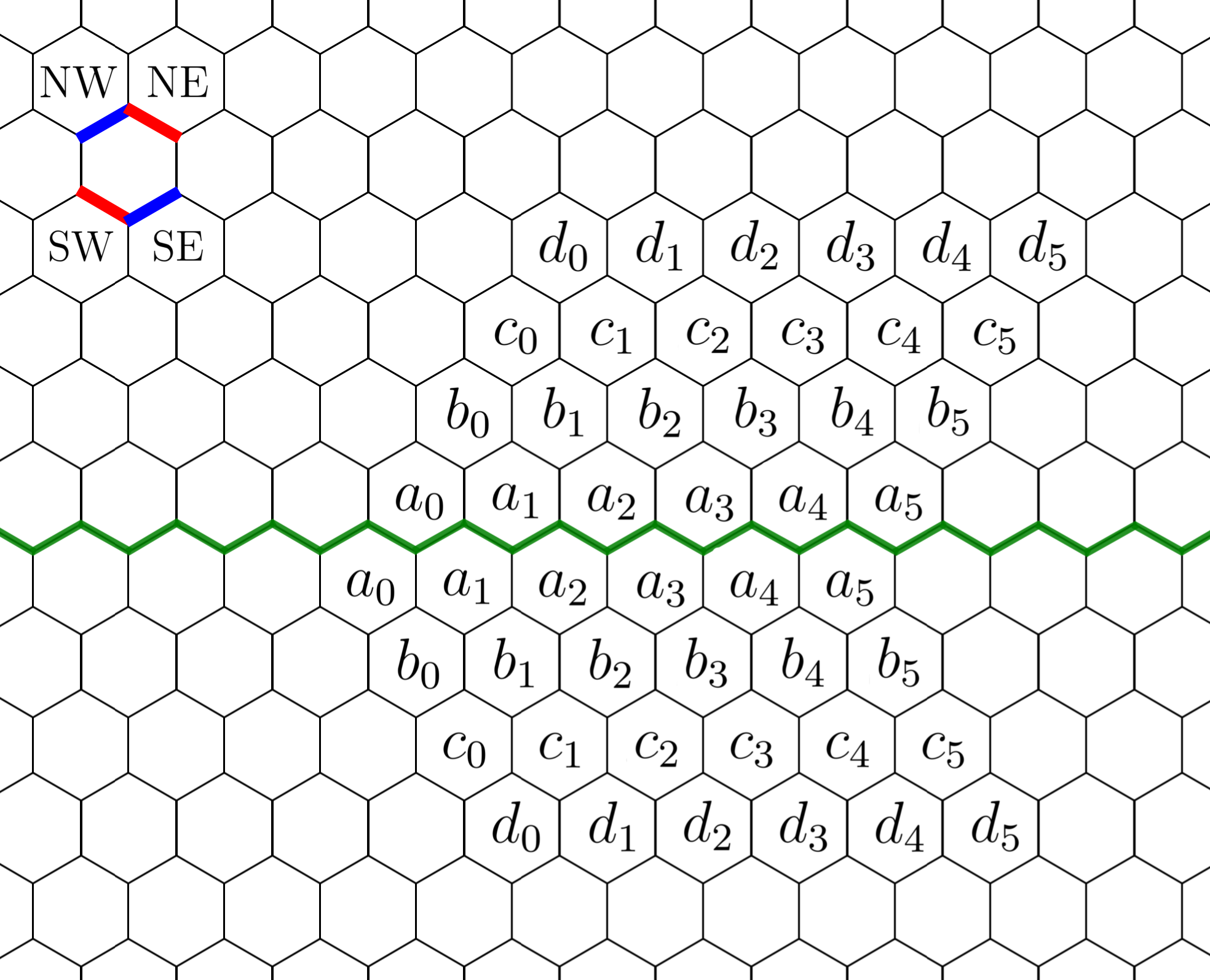}
\captionsetup{style=rightside,font=footnotesize}
\caption{Mirrored pairing of tiles}
\label{Figure.mirroring_empty}
\end{wrapfigure}
\smallskip\noindent\emph{Proof.} 
The strategy-stealing argument of Nash in Theorem~\ref{Theorem.Finite-hex} works equally well in infinite Hex, and so we know that the second player cannot have a winning strategy in infinite Hex. To prove the theorem, therefore, it will suffice for us to show that the second player has a drawing strategy. Let us assume that Red plays first and Blue second. We shall describe a certain mirror-symmetric copying strategy for Blue, which we shall argue is a draw-or-better strategy for Blue.\goodbreak 

At the outset of the game, Blue should fix an arbitrary horizontal line on the board, as shown in green in Figure~\ref{Figure.mirroring_empty}, and consider the induced pairing of hexagonal tiles above and below that line defined by reflection-and-half-tile-shift with respect to it, with the two tiles marked $a_0$ matching, and similarly with $a_1$, $b_0$, and so on. That is, every hexagon below the line is paired with its reflected copy above, shifted half a tile to the right, in the pattern indicated in the figure.

The strategy we have in mind for Blue is simply to play always in the counterpart tile of any move by Red according to this pairing of the  tiles. If Red plays in a tile below the center green horizontal, then Blue will play in the corresponding partner
\begin{wrapfigure}{l}{.5\textwidth}\vskip-1ex    
\includegraphics[width=.45\textwidth]{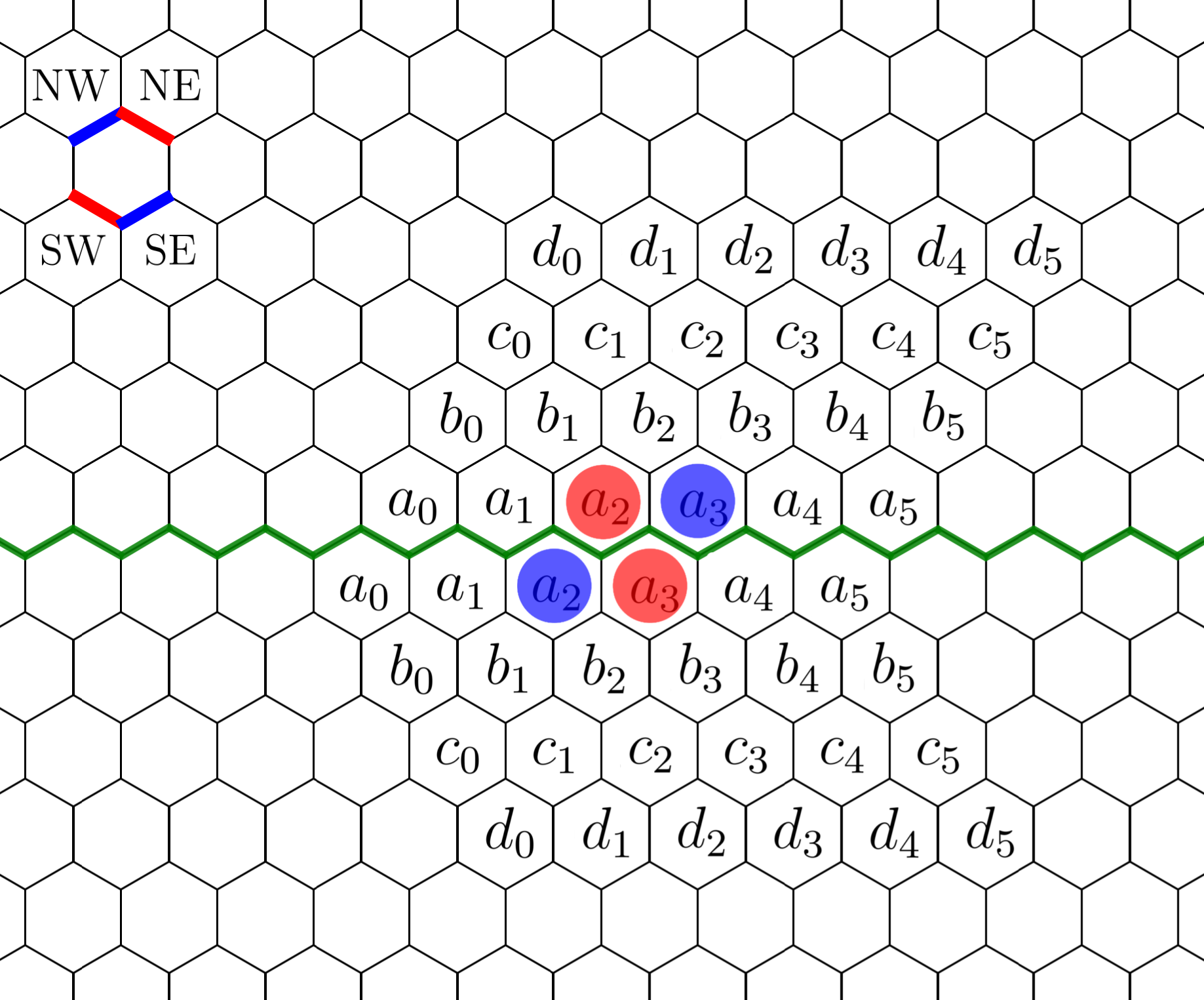}\hfill
\captionsetup{style=leftside,font=footnotesize}
\caption{Red crosses Blue's horizontal}
\label{Figure.mirroring-1-intersection}
\end{wrapfigure}
tile above it, and vice versa. This pairing of tiles is almost but not quite a geometric reflection---the half shift means that the correspondence does not always preserve adjacency of tiles, and indeed the mirroring process can cause tears in connected paths---notice that red tiles $a_2$ and $a_3$ in Figure~\ref{Figure.mirroring-1-intersection} are adjacent, but they are copied by Blue to blue tiles $a_2$ and $a_3$, which are not adjacent. So it isn't quite true that connected red chains are necessarily copied to connected blue chains by this mirroring strategy, and that is not how our argument proceeds. Rather, we shall make a subtler topological 
\enlargethispage{10pt}%
observation, relying on a particular consequence of the counterpart arrangement for the manner in which Red might traverse the center line. Namely, when Blue follows the mirroring strategy, the only way for Red to place two contiguous stones traversing the green center line is in a downward-to-the-right sloping direction, as shown in Figure~\ref{Figure.mirroring-1-intersection}. This simple geometric fact will have an important topological consequence later in the main argument.\goodbreak

\begin{wrapfigure}{r}{.5\textwidth}
\vskip-2ex\hfill
  \includegraphics[width=.45\textwidth]{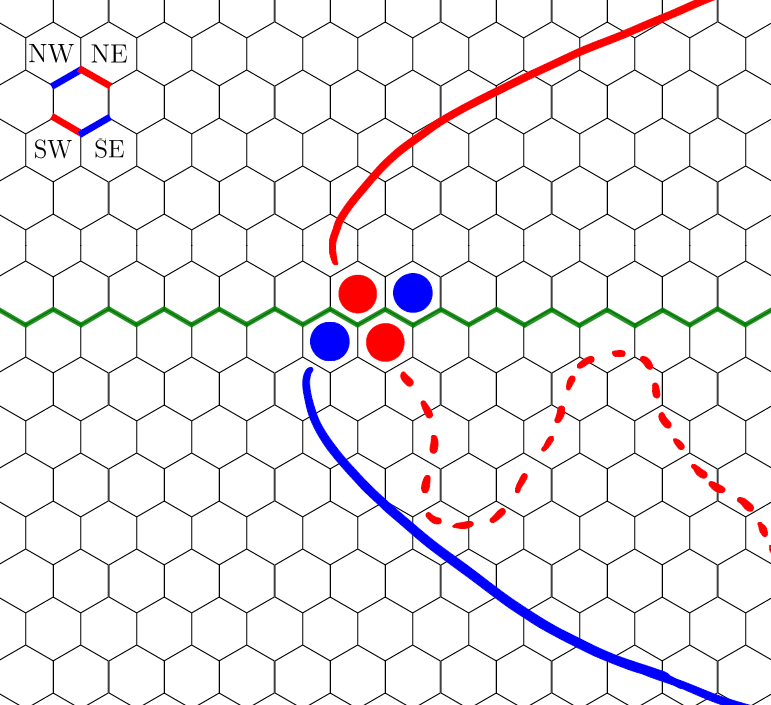}
  \captionsetup{style=rightside,font=footnotesize}
\caption{Red traversing the boundary}
  \label{Figure.Red-traversing-boundary}
\end{wrapfigure}
To prove the theorem we claim that the mirroring strategy is a draw-or-better strategy for Blue. To see this, suppose we have a play of the game in which Red has won, whilst Blue has followed the symmetric mirroring strategy we have described. Descending from infinity at the above right, the winning red $\mathbb{Z}$-chain must eventually traverse the green boundary at some point. We may assume without loss of generality that this red $\mathbb{Z}$-chain is non-self-crossing. Consider the place where the positive branch of the winning red $\mathbb{Z}$-chain crosses the green boundary for the first time, coming from infinity at the above right, say at the point indicated in Figure~\ref{Figure.Red-traversing-boundary}.\goodbreak

The key observation is that because Blue has been following the mirroring strategy, the entire play above the green center line is reflected (and slightly shifted) below, with the colors swapped. In particular, the red path arriving from infinity at upper right will be reflected to a blue path arriving from infinity at lower right. Because of the particular nature and geometry of the crossing, it follows, we claim, that the rest of the red $\mathbb{Z}$-chain after the traverse of the green boundary will be trapped in the right half plane---it will be trapped in the region to the right of the blue path in the lower half plane and the region to the right of the red path in the upper half plane (since we assumed that the chain is not self-crossing). Therefore, the entire red $\mathbb{Z}$-chain will be bounded to a right half plane, and so it cannot be part of a winning red $\mathbb{Z}$-chain, which would be required to depart to infinity at lower left. 

In summary, no coloring of tiles on the board that respects the Blue mirroring strategy can exhibit a winning red path, and so this mirroring strategy will ensure a draw-or-better for Blue, as we claimed. \hfill\fbox{}\medskip\goodbreak 

The argument works whether one uses ordinary infinite play of length $\omega$ or allows transfinite play beyond $\omega$, regardless of who has the right to play first at limits---if Red plays first at a limit, then Blue can play the mirroring move; if Blue plays first at a limit, then Blue can invent an imaginary Red move to copy. In this way, the mirroring strategy for Blue can be played just as well for transfinite play, and the proof of Theorem~\ref{Theorem.main} shows that there can be no position compatible with 
that strategy that has a winning red $\mathbb{Z}$-chain. The proof also works for the fixed-origin weak winning condition, since Blue could simply pick the green center line to contain that origin point. Indeed, the mirroring strategy for Blue results in a play for which Red does not win with respect to any origin at all, since if the upper right red path is in quadrant I with respect to that origin, then the lower blue path will force the other side of the red chain into quadrant~IV.\goodbreak

Let us also observe that the Blue mirroring strategy does not necessarily force a draw, but rather a draw-or-better for Blue. In the game play shown in Figure~\ref{Figure.mirroring-blue-win}, for example, Blue has followed the mirroring strategy, and the result is a win for Blue,
\begin{wrapfigure}{r}{.44\textwidth}
\hfill
\includegraphics[width=.4\textwidth]{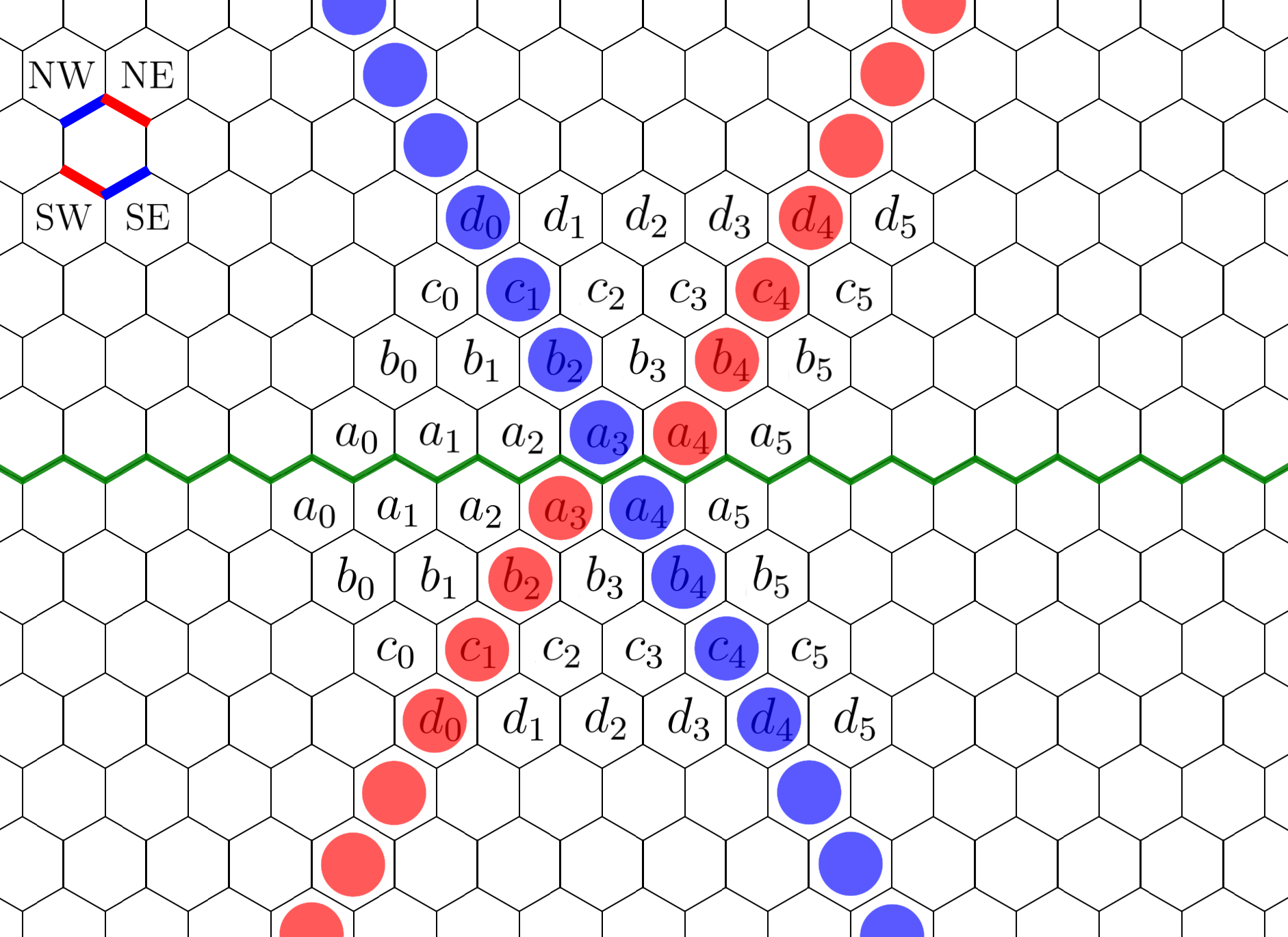}
\captionsetup{style=leftside,font=footnotesize}
\caption{A Blue mirroring win}
\label{Figure.mirroring-blue-win}
\end{wrapfigure}
not a draw. It would admittedly be fairly silly for Red to play this way, however, once the crossing point stones are placed, and we mention this example merely to show that the mirroring strategy by itself does not force a draw, but draw-or-better. Meanwhile, the fact that Blue has a drawing strategy shows by the strategy-stealing argument that Red also has a drawing strategy, and so with optimal play for both players, infinite Hex is a draw.

As a historical remark, we can see that the mirroring strategy generalizes the winning strategy for finite Hex on asymmetric boards---see \cite[\textsection 8]{gardner88}---Theorem~\ref{Theorem.main} can also be proved via a contradiction which reduces a Red winning $\mathbb{Z}$-chain to a winning finite chain for the disadvantaged player in asymmetric Hex.

\section{The finite advantage conjecture}

We should like next to consider how robust is the drawing-strategy phenomenon that we have identified in Theorem~\ref{Theorem.main}. If the first player gets the advantage of an extra stone on the first move, are the scales tipped in their favor? Or is the game still a draw? What if the first player is given the advantage of finitely many extra stones? Is infinite Hex still a draw, if one should start from an arbitrary finite position? We conjecture indeed that there is no such finite advantage.

\begin{conjecture}[Finite advantage conjecture]\label{Conjecture.Finite-advantage}
 Infinite Hex remains a draw when starting from a board position with finitely many stones already placed.
\end{conjecture}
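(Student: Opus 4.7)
The plan is to adapt the mirroring strategy of Theorem~\ref{Theorem.main} to handle a finite initial configuration $P$, whose stones all lie in some bounded region $B$ of the hexagonal plane. We aim to construct \emph{draw-or-better} strategies for each player from $P$; together these force a draw.

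For Blue's draw-or-better strategy when Red moves first, Blue fixes a horizontal green line $\ell$ disjoint from $B$---arranged so that $B$ lies entirely below $\ell$---and plays the \emph{modified mirroring strategy}: after each Red move $m$, if the half-shift mirror image $m'$ of $m$ across $\ell$ is unoccupied, Blue plays $m'$; otherwise Blue plays some arbitrary empty tile in a designated reserve region far from the action. Limit-ordinal play is handled as in Theorem~\ref{Theorem.main}. Red's draw-or-better strategy is obtained by an entirely analogous mirroring argument with colors swapped and an axis of symmetry disjoint from $B$; if Red moves first, Red precedes the mirroring phase with an innocuous opening in the reserve region, treating this extra red stone as harmless.

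The heart of the proof is to show that these strategies actually prevent the opponent from winning. Focusing on Blue's case, suppose for contradiction that Red achieves a winning $\mathbb{Z}$-chain $C$. Since $C$ extends to $(\infty,\infty)$ and $B$ is bounded, there is a last crossing $x$ of $C$ with $\ell$ as one traces $C$ from $(\infty,\infty)$ downward, and the upper-right tail $C^+$ of $C$ from $x$ to $(\infty,\infty)$ lies strictly above $\ell$. Under the modified mirroring, the would-be image $\overbar{C^+}$ of $C^+$ below $\ell$ is a blue chain from the mirror of $x$ out to $(\infty,-\infty)$. In the empty-board case of Theorem~\ref{Theorem.main}, $\overbar{C^+}$ is authentic---every tile of it was actually played blue---and together with $C^+$ it forms a topological barrier trapping the remainder of $C$ in the right half-plane, contradicting $C$'s reach to $(-\infty,-\infty)$.

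The main obstacle lies in handling gaps in $\overbar{C^+}$ caused by the finite disruption of $P$: whenever a mirror tile coincides with a stone of $P$, Blue has had to play elsewhere, leaving a hole in the would-be barrier, and a red initial stone at such a location could even serve as a bridge by which $C$ breaches the barrier. We anticipate the resolution via a finite-board reduction in the spirit of Theorem~\ref{Theorem.One-player-wins}: enclose $B$, the crossing $x$, and all barrier gaps in a sufficiently large finite rhombus $R$, and invoke Theorem~\ref{Theorem.Finite-hex}(1) to derive a contradiction from the coexistence of Red's alleged winning chain and Blue's partial mirror barrier on $R$. The principal difficulty is carefully choosing $x$ and $R$---for example, picking $x$ far enough to the right that the uncorrupted portion of $\overbar{C^+}$ already produces a horizontally unbounded blue structure---so that the bounded disruption near $B$ cannot produce an unbounded escape route for $C$.
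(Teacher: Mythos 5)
There is a genuine gap here---indeed, the statement you are trying to prove is Conjecture~\ref{Conjecture.Finite-advantage}, which the paper explicitly leaves \emph{open} (it reappears as the first item in the list of open questions). The paper proves it only conditionally: Theorem~\ref{Theorem.If-trapezoids-then-draw} and Corollaries~\ref{Corollary.Trapezoid-Blue-draw} and~\ref{Corollary.Truncated-quadrant-Blue-draw} derive the conclusion from the unproven hypothesis that Blue has first-player winning strategies on arbitrarily large trapezoids (equivalently, on all truncated quadrants---Theorem~\ref{Theorem.Truncated-quadrants-iff-trapezoids}, itself open as Question~\ref{Question.Truncated-quadrants}), and unconditionally the paper establishes only the variant for the \emph{strong} win condition. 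So a correct completion of your sketch would resolve an open problem, and the sketch as written does not do so: you candidly flag the crux (``we anticipate the resolution via a finite-board reduction'') but never supply it, and that anticipated resolution in fact fails. A red stone of the initial position $P$ sitting at the mirror partner $t'$ of an empty tile $t$ above $\ell$ is an \emph{unpatchable} hole: Blue can never play $t'$, and Red can deliberately exploit this by routing the upper tail of the chain through $t$, so that the would-be barrier $\overbar{C^+}$ below $\ell$ contains a tile that is permanently \emph{red}---a tile Red's chain can simply pass through to escape into the left half-plane. The finite-rhombus reduction gives no contradiction with Theorem~\ref{Theorem.Finite-hex}(1), because Blue's corrupted barrier is not a winning blue connection on any rhombus enclosing $B$; the finite Hex theorem would merely certify that Red wins on that rhombus. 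Choosing the last crossing $x$ far to the right does not help, since the hole near $B$ persists independently of $x$ and lies on the portion of the barrier the trapped chain must traverse to reach $(-\infty,-\infty)$.

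Nor can the hole be pre-emptively patched: Blue could try to spend early moves occupying the partners above $\ell$ of the finitely many red stones of $P$, but each such patching move leaves one contemporaneous Red move unmirrored, creating a fresh unmirrored red stone whose partner is now a new potential breach---the mirroring strategy has no tempo to spare, which is precisely why the finite advantage question is nontrivial. (A further point worth noting: from a non-symmetric finite position the Nash strategy-stealing argument no longer transfers a drawing strategy from one player to the other, so both players genuinely need separate arguments, as you implicitly acknowledge.) This failure of the mirroring adaptation is exactly what drives the paper to the entirely different trapezoid-blocking approach, in which Blue builds infinitely many disjoint finite obstacle connections in an empty quadrant; even that route succeeds only under the unresolved hypothesis about trapezoids. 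Your proposal is thus a proof plan with a hole at precisely the point where the known difficulty lives, not a proof.
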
\goodbreak

\begin{wrapfigure}{r}{.45\textwidth}
\vskip-1.5ex\hfill
\includegraphics[width=.4\textwidth]{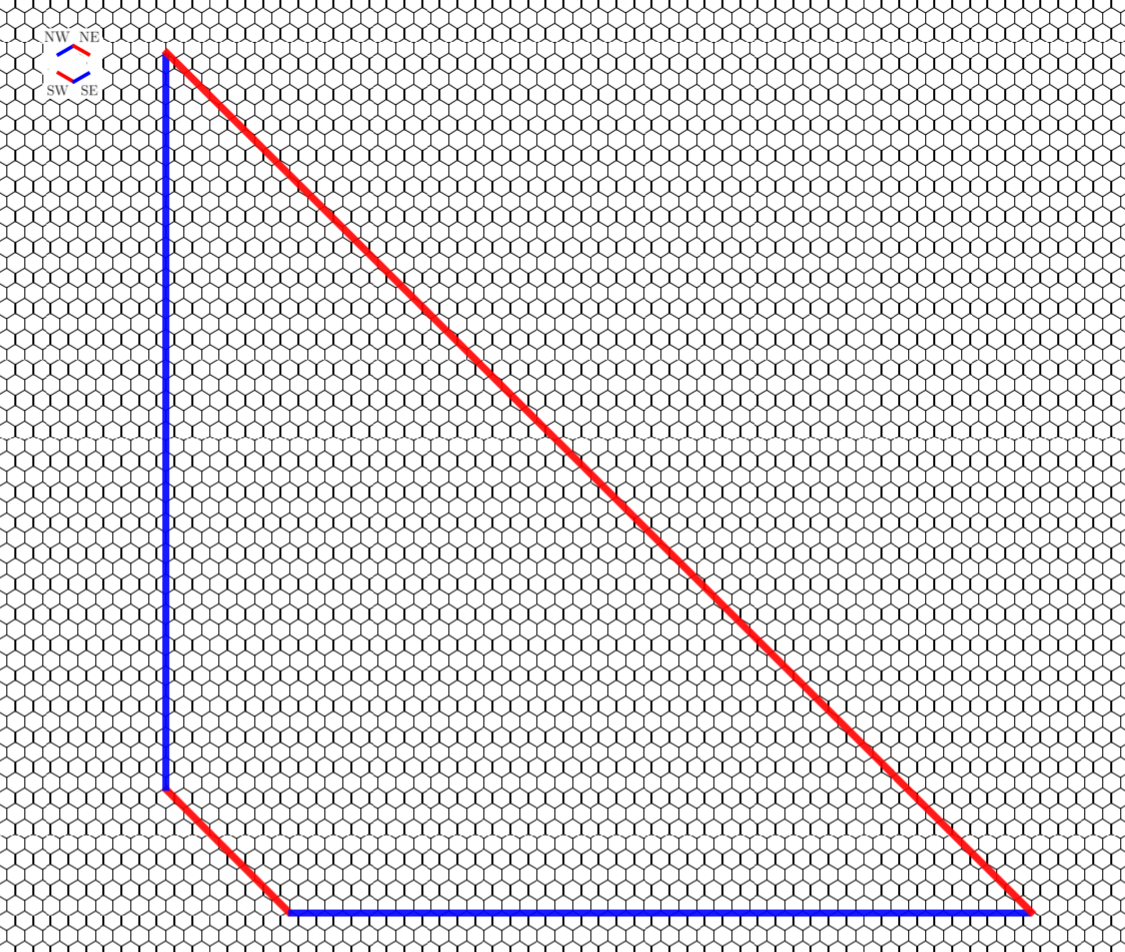}
\captionsetup{style=rightside,font=footnotesize} \caption{Trapezoid board}
\label{Figure.trapezoidal-board}
\end{wrapfigure}
\subsection{Winning on trapezoids}\enlargethispage{40pt}
The conjecture is open in the general case, but we shall prove that it holds under the assumption that Red can win certain configurations of finite Hex, although we are unsure whether this additional assumption is true. We shall also prove the conjecture holds for a certain strengthening of the winning condition.

To begin, consider a large isosceles trapezoid Hex board, as shown in Figure~\ref{Figure.trapezoidal-board}. Red aims to connect the two parallel sides of the trapezoid and Blue to connect the two orthogonal sides. The trapezoid is obtained by truncating the right-angle corner of an isosceles right triangle, and is determined by two lengths, the length of the shorter red side and the length of the blue segments.\goodbreak

\begin{theorem}\label{Theorem.If-trapezoids-then-draw}
 If Blue has a first-player winning strategy for arbitrarily large trapezoids as above, then there is no finite advantage in infinite Hex. That is, on an infinite Hex board with finitely many stones already placed, both players have drawing strategies.
\end{theorem}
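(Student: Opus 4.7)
My plan is to argue by contradiction: suppose that one of the players, say Red, has a winning strategy $\sigma$ in infinite Hex from the finite position $P$. By the color symmetry of both infinite Hex and the trapezoid hypothesis (swapping colors interchanges the roles of parallel-side and orthogonal-side connectors), an analogous argument will rule out a Blue winning strategy from $P$; together, invoking the determinacy of the infinite game (as discussed around Question~\ref{Question.Complexity-winning-condition}), this would yield drawing strategies for both players.

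To derive the contradiction, I would fix a very large isosceles trapezoid $T$ in the hexagonal plane, positioned so that (i) the finite support of $P$ sits well inside $T$, and (ii) the two parallel sides of $T$ are oriented transversely to the SW--NE direction (Red's winning direction), with one parallel side on the SW side of $P$ and the other on the NE side. The aim is to extract from $\sigma$ a winning strategy for Red as second player in the trapezoid game on $T$, connecting the two parallel sides. Since the trapezoid hypothesis gives Blue a first-player winning strategy on $T$, and finite Hex admits at most one winner by Theorem~\ref{Theorem.Finite-hex}, this is the desired contradiction.

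The extraction would proceed by simulation. For a Blue trapezoid opponent playing moves $(b_1, \dots, b_k)$ inside $T$, Red computes a response by simulating the infinite Hex game in which Blue's $(b_1, \dots, b_k)$ are interleaved with a fixed, deterministic sequence of ``dummy'' Blue moves outside $T$ to maintain turn parity, and Red plays $\sigma$'s responses. Red's trapezoid move is $\sigma$'s response whenever it lies inside $T$, and a designated default tile inside $T$ otherwise. By hypothesis, the simulated play is won by Red, producing a bi-infinite Red $\mathbb{Z}$-chain from SW to NE; by the transverse orientation of $T$, the chain must cross both parallel sides of $T$, and its restriction to $T$ is intended to furnish the required winning chain for Red in the trapezoid game.

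The hardest part will be ensuring that the restriction of the Red $\mathbb{Z}$-chain to $T$ is actually a connected Red path joining the two parallel sides. A bi-infinite $\mathbb{Z}$-chain need not be monotone in the SW--NE direction and can, in principle, weave in and out of $T$ many times, producing a disjoint union of sojourns inside $T$, none of which individually joins the two parallel sides. Overcoming this will require a careful choice of $T$'s size relative to the support of $P$, together with a careful selection of the dummy Blue moves---perhaps acting as confining ``walls'' to funnel Red's chain through $T$ by a single monotone crossing---without inadvertently creating Blue winning configurations that would undermine $\sigma$'s hypothesized winning behavior. A related concern is that Red's default moves, played whenever $\sigma$ strays outside $T$, must not disrupt the connectedness of the chain inside $T$.
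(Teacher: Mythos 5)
There is a genuine gap, and it sits exactly where you placed your ``hardest part'': the extraction of a Red trapezoid strategy from $\sigma$ cannot be made to work, because the standard winning condition constrains a red $\mathbb{Z}$-chain only in its behavior at infinity. No fixed bounded region---in particular no single trapezoid $T$, however large relative to the support of $P$---is unavoidable for a winning chain, which may simply detour around $T$. Your proposed dummy-move ``walls'' cannot funnel it through: sealing off every route around $T$ would require infinite blue barriers, erected one stone per turn over $\omega$ stages, while the trapezoid game on $T$ is finite; and in any case $\sigma$ can have Red cross any partially built wall at a tile Blue has not yet reached, and such a crossing, once made, persists forever. So the simulated play is indeed a Red win, but its winning chain may meet $T$ in finitely many disconnected sojourns, or not at all, and no second-player Red win on $T$---hence no contradiction with Blue's first-player win---is forthcoming. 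Note also that even if the real trapezoid opponent completes a blue crossing of $T$, that bounded blue path is no obstruction whatever to the simulated red chain, which goes around $T$; so the contradiction cannot be recovered from the other side either. This is not a technicality to be patched by careful choices; it is the reason the paper argues in the opposite direction.

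A second, independent gap: even if the contradiction succeeded, it would show only that neither player has a winning strategy from $P$; to conclude that \emph{both players have drawing strategies} you invoke determinacy of infinite Hex, which the paper pointedly does not have available---the winning condition is a priori only $\Sigma^1_1$, determinacy of such positions is raised as an open concern in the discussion around Question~\ref{Question.Complexity-winning-condition}, and the recently announced arithmetic bound is treated as separate news, not a resource for this theorem. The paper's proof avoids both problems by using the hypothesis \emph{constructively}: Blue tiles the quadrant relevant to a Red win with infinitely many disjoint, increasingly large winning-for-Blue trapezoids and executes the finite first-player winning strategies on them one at a time (inventing imaginary Red moves when Red plays elsewhere), thereby building infinitely many blue crossings spanning the quadrant. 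Any red $\mathbb{Z}$-chain heading to infinity would have to zig-zag in and out of the quadrant to get around them, violating the standard winning condition---so this is directly a drawing strategy for Blue, with Red's symmetric case handled likewise, and no appeal to determinacy or to the at-most-one-winner theorem beyond Theorem~\ref{Theorem.Finite-hex} is needed. The lesson, and the idea missing from your proposal, is that one bounded obstacle is worthless against the standard winning condition, whereas infinitely many obstacles filling an entire quadrant suffice.
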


\smallskip\noindent\emph{Proof.} 
By ``arbitrarily large,'' what we mean is that Blue, playing first, can win instances of finite Hex played on such trapezoids with the truncated length as large as desired, and the orthogonal sides sufficiently long that Blue has a winning strategy on that finite board. In other words, we assume that for any desired length of the shorter red side of the trapezoid, we can find such a trapezoid that is also sufficiently thick, with the other red side perhaps very far away, such that Blue moving first has a winning strategy to join the blue sides on that trapezoid. (Clearly Red will win if the trapezoid is too thin, with the red sides too close---we care only about sufficiently thick trapezoids that Blue might win.) 

\begin{wrapfigure}{r}{.45\textwidth}
\vskip-1ex\hfill
\includegraphics[width=.4\textwidth]{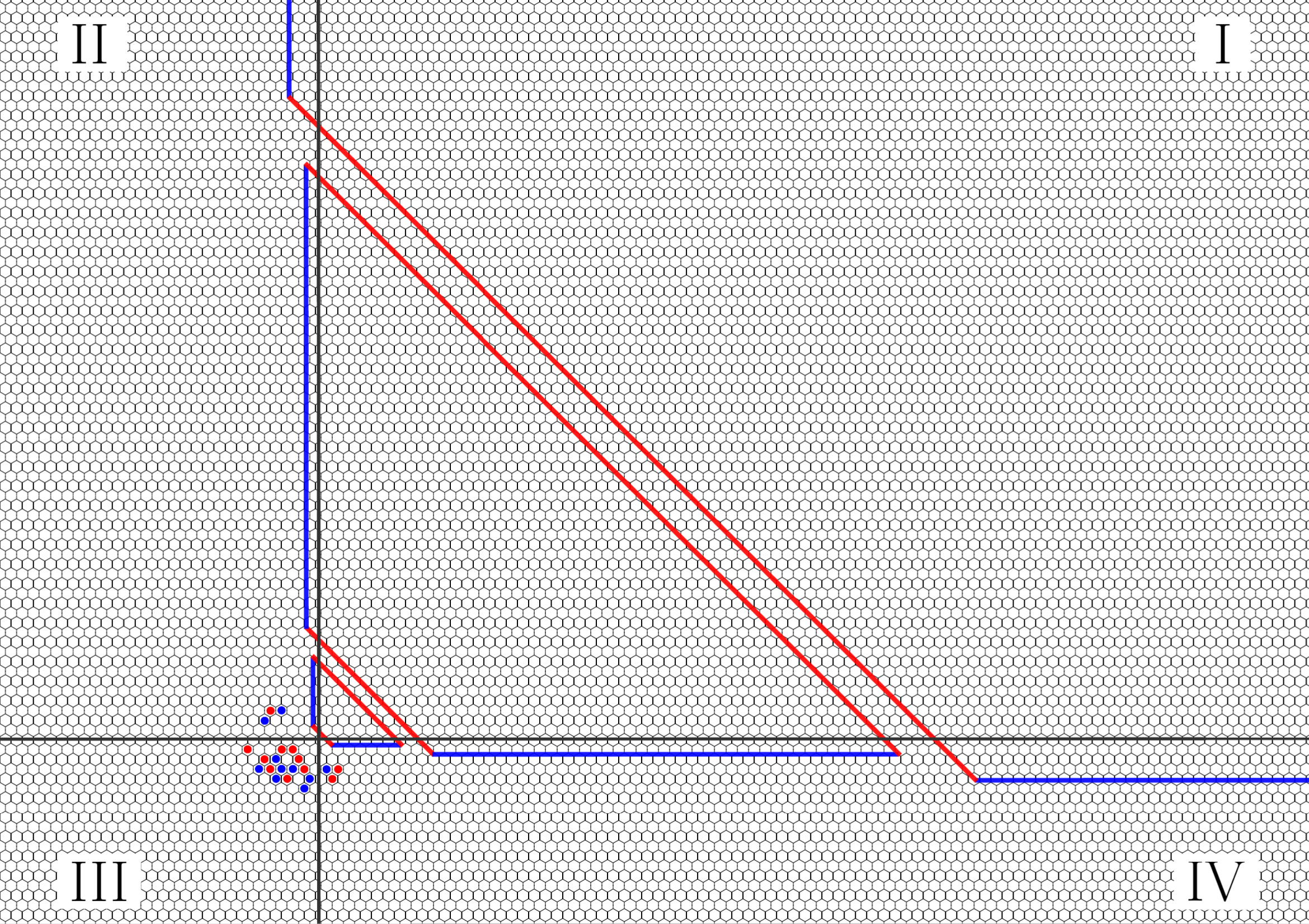}
\captionsetup{style=rightside,font=footnotesize}
\caption{Blue aims to block Red by winning on trapezoids}
\label{Figure.Blue-wins-each-trapezoid}
\end{wrapfigure}
It will suffice by symmetry to prove that Blue has a drawing strategy in infinite Hex proceeding from a position in which finitely many stones have already been placed. Recall that, in order to win a play of the game, Red must build a $\mathbb{Z}$-chain of adjacent red stones, such that for any choice of center, the positive part of the $\mathbb{Z}$-chain eventually enters quadrant I as determined by every such origin point.

Fix a center, and we shall mount a blocking strategy for Blue by dividing the upper right quadrant above the finite existing play into infinitely many disjoint winning-for-Blue trapezoids, as indicated in Figure~\ref{Figure.Blue-wins-each-trapezoid}. Given such an arrangement of trapezoids, the main line of play for Blue is to select the first available empty trapezoid as the currently ``active'' one and make an initial winning move on it. Blue will then continue to play on this active trapezoid (even if Red plays elsewhere), using the winning strategy so as to ensure a Blue connection of the two sides. When this active trapezoid is won, Blue deactivates it and proceeds to activate another, larger empty trapezoid and play so as to win on that one, and so on in this same manner. At any stage of play, Blue is building a winning connection in the currently active trapezoid, ignoring Red moves outside of that trapezoid, and then afterward proceeding to activate another larger trapezoid. Ultimately, in this way Blue will build infinitely many obstacle paths, connecting the two sides of infinitely many of the trapezoids. These blue obstacles will prevent Red from winning the infinite game, because any red $\mathbb{Z}$-chain going to infinity would have to zig-zag in and out of quadrant I in order to get around them. Thus, this is a drawing strategy for Blue in the infinite Hex game.
\hfill\fbox{}\medskip\goodbreak 

The argument succeeds also with respect to transfinite play, since Blue will have built infinitely many obstacle paths already by stage $\omega$, and so further Red moves even in transfinite time would be of no help as the blue obstacles must still be circumvented. Moreover, the theorem also holds with respect to the fixed-origin winning condition, since Blue can set up the trapezoids for any desired center point. A slight variation of the argument works with respect to the some-origin winning condition, since Blue can make the trapezoids increasingly wide, as hinted at in Figure~\ref{Figure.Blue-wins-each-trapezoid}, and in this manner, they would eventually form obstacles for any given center point, not just the center originally chosen.

\begin{corollary}\label{Corollary.Trapezoid-Blue-draw}
If Blue has a first-player winning strategy for arbitrarily large trapezoids, then for any position in infinite Hex leaving either some upper right quadrant or lower left quadrant empty, Blue will have a draw-or-better strategy.
\end{corollary}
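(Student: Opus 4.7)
The plan is to observe that the proof of Theorem~\ref{Theorem.If-trapezoids-then-draw} uses the finiteness of the initial position only to guarantee that, relative to a chosen center, some diagonal strip of the upper right quadrant is empty---this empty region is where the infinitely many disjoint blue-winning trapezoids are laid out. Once we assume directly that some upper right quadrant (or by symmetry some lower left quadrant) is already empty, that very assumption supplies the same empty diagonal region, and the argument goes through essentially unchanged.

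First I would fix an origin $O$ with respect to which the upper right quadrant is empty; the lower left case reduces to this one by applying the $180^\circ$ rotation about $O$, which preserves the red and blue players (both winning conditions are symmetric under this rotation) while swapping the upper right and lower left quadrants. Then, as in the proof of Theorem~\ref{Theorem.If-trapezoids-then-draw}, I would partition a diagonal strip of the empty upper right quadrant into infinitely many disjoint isosceles trapezoids of increasing size, each thick enough that Blue has a first-player winning strategy on it (using the hypothesis of the corollary). Blue's strategy in the infinite game is the sequential ``one active trapezoid at a time'' strategy of Theorem~\ref{Theorem.If-trapezoids-then-draw}: Blue plays according to the winning trapezoid strategy on the currently active trapezoid---legitimately treating it as an independent finite game with Blue to move first, since all its cells lie inside the empty quadrant and are therefore vacant at the point when Blue activates it---ignoring Red moves outside the active trapezoid until a blue side-to-side connection is completed, then activating the next larger empty trapezoid, and so on.

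The result, just as before, is infinitely many blue obstacles spanning arbitrarily far out in the upper right diagonal direction. Any red $\mathbb{Z}$-chain that tries to escape to infinity into quadrant I with respect to $O$ must either traverse one of these blue barriers, which is impossible, or detour around them, which forces the chain in and out of quadrant I infinitely often, violating the standard winning condition at the origin $O$. Hence Red cannot win, and Blue has the desired draw-or-better strategy.

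The one point requiring a brief remark---and the only place the situation genuinely differs from Theorem~\ref{Theorem.If-trapezoids-then-draw}---is that the given starting position may have infinitely many stones placed outside the empty quadrant, not merely finitely many. I do not expect this to be a real obstacle: Blue's trapezoids are constructed entirely inside the empty quadrant, Blue's moves are confined to those trapezoids, and the blocking argument only needs to prevent the positive end of a red $\mathbb{Z}$-chain from reaching infinity in quadrant I, which is a local matter within the empty quadrant and does not interact with whatever has been placed elsewhere on the board.
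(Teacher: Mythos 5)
Your proposal is correct and follows essentially the same route as the paper, whose proof simply observes that the argument of Theorem~\ref{Theorem.If-trapezoids-then-draw} used finiteness of the initial position only to secure a free upper right quadrant for placing the trapezoids, with a symmetric argument for the lower left case. One small wording caveat: a trapezoid is vacant at activation not because it lies in the initially empty quadrant (Red may have played there in the meantime) but because Blue deliberately activates an \emph{empty} trapezoid, which always exists since Red has placed only finitely many stones by any finite stage---a point your strategy description already gets right even though the parenthetical justification misattributes it.
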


\begin{proof}
This is what the argument of Theorem~\ref{Theorem.If-trapezoids-then-draw} shows, since all that was needed was some point whose upper right quadrant was free, in order to place the trapezoids. A similar argument would work in the case that a lower left quadrant was available.
\end{proof}

\begin{wrapfigure}{r}{.45\textwidth}\vskip-1.5ex
\hfill
  \includegraphics[width=.4\textwidth]{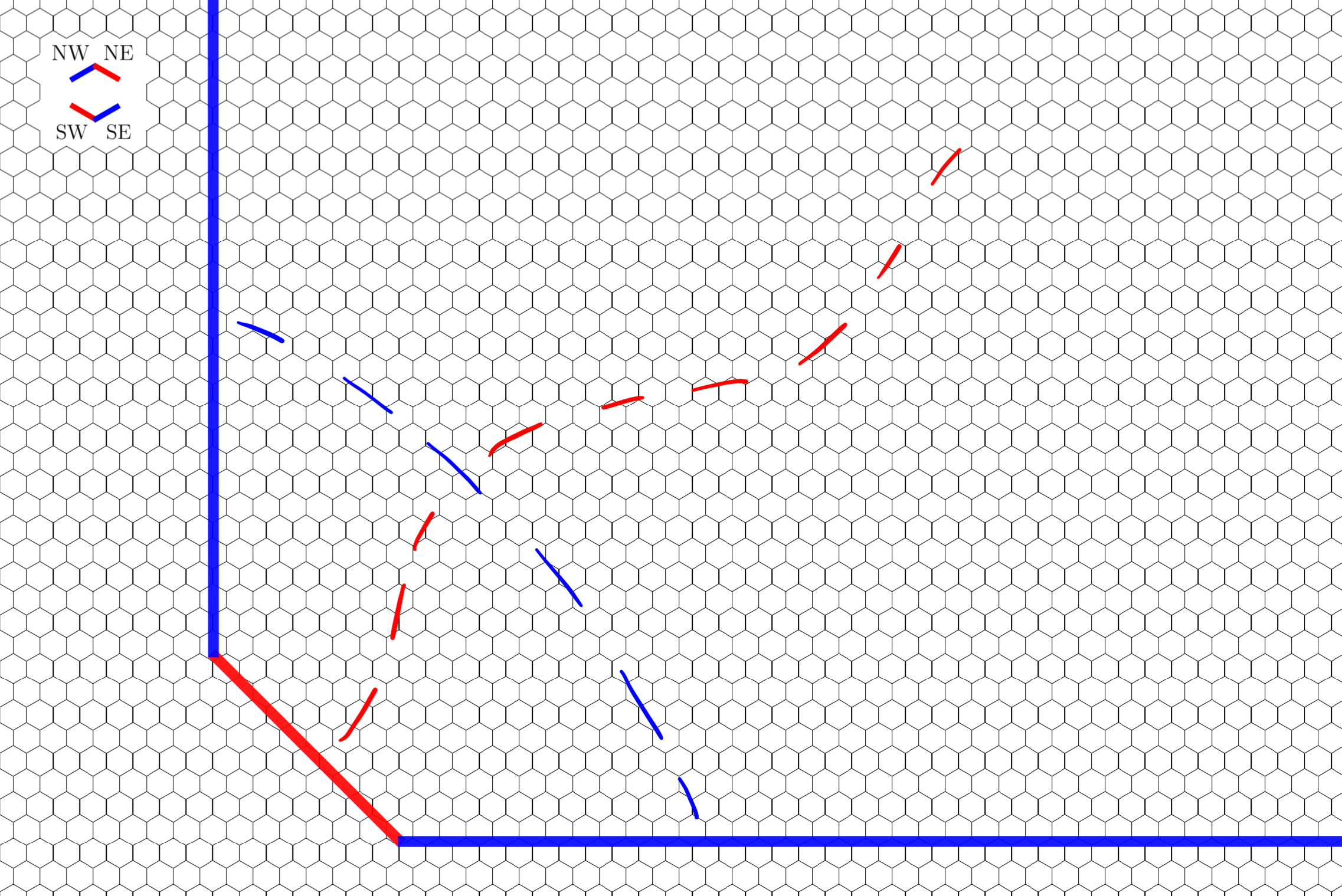}
  \captionsetup{style=rightside,font=footnotesize}   
  \caption{Can Blue connect the sides?}
  \label{Figure.truncated quadrant}
\end{wrapfigure}
It is natural to inquire about an infinitary version of the trapezoid assumption, namely, whether Blue can connect the two sides on every truncated quadrant, as shown in Figure~\ref{Figure.truncated quadrant}, no matter how large the red truncation in the lower left corner. Blue aims to connect the two sides, while Red aims to prevent this, by connecting the corner truncation boundary to infinity. This truncated quadrant game exhibits several attractive game-theoretic features, such as the fact that this is an open game, in the sense discussed further in Section~\ref{Section.Game-values-in-infinite-Hex}, because any Blue win will occur, if at all, by a finite stage of play. In particular, the truncated quadrant games are subject to the theory of ordinal game values. Moreover, this asymmetric game does not allow draws through transfinite play---Blue wins if the Gale tour, constructed as in the proof of Theorem~\ref{Theorem.Finite-hex}, is finite, while Red wins when the Gale tour is infinite.\goodbreak

\begin{corollary}\label{Corollary.Truncated-quadrant-Blue-draw}
If Blue has a first-player winning strategy on every truncated quadrant, then there is no finite advantage in infinite Hex---both players have drawing strategies from any finite position. Indeed, a player has a drawing strategy in any position having an empty opposing-player-relevant quadrant.
\end{corollary}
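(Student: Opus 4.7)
The plan is to reduce this corollary to the already-established Corollary~\ref{Corollary.Trapezoid-Blue-draw} by deriving its trapezoid hypothesis from the stronger truncated-quadrant hypothesis. Specifically, I would first show that if Blue has a first-player winning strategy on every truncated quadrant, then Blue has a first-player winning strategy on arbitrarily large trapezoids of any desired short-red-side length.

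For this key reduction, I would fix a desired length $L$ and let $\sigma$ be Blue's winning strategy on the truncated quadrant $K_L$ with truncation of size $L$. Because Blue's winning condition on $K_L$ is open---Blue wins at some finite stage by completing a finite connecting chain between the two outer sides of the quadrant---every play of $\sigma$ against a Red strategy terminates in finitely many moves, using finitely many tiles. The aim is to show that some finite thickness $T$ suffices, so that $\sigma$, suitably restricted, wins on the trapezoid with short red side $L$ and thickness $T$. I would attempt a compactness argument on the space of Red plays: if no such $T$ worked, then for each $n$ there would be a Red play against which $\sigma$ either is forced out of the thickness-$n$ trapezoid or fails to win within it; amalgamating these plays would produce a Red play against $\sigma$ of unbounded extent, contradicting that $\sigma$ wins in finitely many moves. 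An alternative route uses ordinal game values: because $K_L$ is an open game for Blue with a winning strategy, it admits an ordinal value, and induction on this value should pin down a spatially bounded winning strategy.

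Having established the trapezoid hypothesis, I would then invoke Corollary~\ref{Corollary.Trapezoid-Blue-draw} directly: for any position leaving an upper-right or lower-left quadrant empty, Blue has a draw-or-better strategy via the infinitely-many-disjoint-trapezoids construction of Theorem~\ref{Theorem.If-trapezoids-then-draw}. For the no-finite-advantage claim, I observe that from any finite starting position, translating the center sufficiently far up and to the right makes the upper-right quadrant empty of placed stones, so Blue has a drawing strategy. The standard color-swap symmetry yields a Red drawing strategy in any position with an empty upper-left or lower-right quadrant, Red's opposing-player-relevant quadrants, completing the corollary.

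The main obstacle will be the sub-step of confining $\sigma$ to a bounded thickness. Although intuitively the openness of the winning condition forces a finite spatial footprint, the infinite branching of the board (Red has infinitely many move options each turn) blocks a direct K\"onig's-lemma compactness, and $\sigma$ could in principle direct Blue to respond at arbitrary distances in the quadrant. Formalizing the compactness on the right space of partial plays---or alternatively running the game-value induction carefully---will be the technical heart of the argument.
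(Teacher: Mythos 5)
Your plan---derive the trapezoid hypothesis from the truncated-quadrant hypothesis and then invoke Corollary~\ref{Corollary.Trapezoid-Blue-draw}---is exactly the forward direction of Theorem~\ref{Theorem.Truncated-quadrants-iff-trapezoids}, which the paper deliberately defers to Section~\ref{Section.Game-values-in-infinite-Hex} because it requires the intrinsic-locality theorem (Theorem~\ref{Theorem.Hex-intrinsically-local}). Your compactness sketch for that reduction genuinely fails, not merely for technical reasons: a winning strategy $\sigma$ in an open game can win every play in finitely many moves without any uniform bound on the length or spatial extent of those plays---this is precisely what a transfinite game value such as $\omega$ looks like, and nothing rules it out a priori. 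So from ``for each $n$ there is a Red play forcing $\sigma$ beyond the thickness-$n$ trapezoid'' no contradiction follows: those plays diverge from Red's first move, the infinite branching blocks K\"onig's lemma (as you note), and amalgamating them does not yield a single play on which $\sigma$ fails to win finitely. Openness alone does not force a finite footprint; what does is the special structure of simple stone-placing games (nondisjointness of winning configurations, and the fact that extra opposing stones outside a region pose no threat), exploited in the induction on game value in Theorem~\ref{Theorem.Hex-intrinsically-local}---the ``alternative route'' you gesture at, but whose proof (the construction of $D_0$, the finitely many $D_i$, and $D=\bigcup_{w_i\in D_0}D_i$) is exactly the substantial content you would have to supply. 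As it stands, the technical heart of your argument is an acknowledged hole that happens to be a later theorem of the paper.

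The deeper issue is that the corollary does not need any of this: the paper proves it directly, bypassing trapezoids entirely. Given a position with an empty quadrant~I relevant to a Red win, Blue simply plays the assumed truncated-quadrant winning strategy \emph{in situ} on that quadrant, inventing imaginary Red moves whenever Red plays elsewhere; since the truncated quadrant game is open for Blue, the blue connection is completed after finitely many moves, at which point only finitely many stones sit on the board and a larger truncated quadrant farther out remains empty, so Blue repeats. The resulting infinitely many disjoint blue connections obstruct any red winning $\mathbb{Z}$-chain, since such a chain would have to enter and remain in quadrant~I but cannot cross the blue barriers. No bounded-footprint lemma, no compactness, no game values. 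Your final assembly steps (translating the origin so that the upper-right quadrant is empty of the finitely many placed stones, and the color-swap symmetry for Red) are correct and match the paper's, but the reduction you route them through converts an elementary argument into one resting on the paper's heaviest machinery, with its key step unproved.
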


\begin{proof}
Let us assume that we have a position with an empty quadrant I for some choice of origin, which is a quadrant relevant for a Red win---the other cases are similar. We assume that Blue can win on any truncated quadrant. To force a draw, we shall first direct Blue to play so as to win on that quadrant, inventing imaginary moves for Red when Red does not reply in that quadrant. After building a blue connection of the sides of this truncated quadrant, which will occur after finitely many moves, there will therefore remain a larger truncated quadrant still empty, and we may then direct Blue next to win on that truncated quadrant. And so on. In this way, Blue will build infinitely many disjoint connections of the blue sides, and these will form obstacles to a Red win on the infinite Hex board. So this is an infinite Hex drawing strategy for Blue on any board with an empty quadrant I. A symmetric argument works in the other cases, where there is an empty quadrant relevant for the opponent.
\end{proof}

Notice that if Blue wins on a truncated quadrant, then Blue also wins on all truncated quadrants with a smaller truncation, and so there is no need in Corollary~\ref{Corollary.Truncated-quadrant-Blue-draw} to refer to arbitrarily large as opposed to all truncated quadrants.

Although the assumption that Blue wins on all truncated quadrants may appear at first to be weaker than the trapezoid assumption of Theorem~\ref{Theorem.If-trapezoids-then-draw} and Corollary~\ref{Corollary.Trapezoid-Blue-draw}, since if Blue can win on a trapezoid, then Blue can win on the corresponding truncated quadrant, nevertheless, these two hypotheses are actually equivalent. In this sense, Corollaries~\ref{Corollary.Trapezoid-Blue-draw} and~\ref{Corollary.Truncated-quadrant-Blue-draw} amount to the same fact.

\begin{theorem}\label{Theorem.Truncated-quadrants-iff-trapezoids}
 Blue has a first-player winning strategy on all truncated quadrants if and only if Blue has first-player winning strategies on arbitrarily large trapezoids. 
\end{theorem}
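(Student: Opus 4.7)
The plan is to prove the two directions of the biconditional separately, treating the implication from trapezoids to truncated quadrants as the straightforward direction and the reverse as the main challenge.

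For the easy direction, given that Blue wins $T_{k,\ell}$ as first player with strategy $\sigma_\ell$ for some sufficiently large $\ell$, I would show that Blue wins $Q_k$ by playing $\sigma_\ell$ in an imaginary trapezoid game inside $Q_k$. The trapezoid $T_{k,\ell}$ embeds in $Q_k$ with its red truncation coinciding and its blue sides lying along initial segments of $Q_k$'s blue sides. During play on $Q_k$, each Red real move inside $T_{k,\ell}$ is recorded as Red's move in the imaginary game, while each Red real move outside is replaced by an imagined Red move at some empty dummy cell of $T_{k,\ell}$; by monotonicity of Hex, the extra imagined red stones never hurt Blue, so $\sigma_\ell$ remains winning in this imaginary play. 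Blue's $\sigma_\ell$-dictated responses are all inside $T_{k,\ell}$, and after finitely many imaginary moves produce a chain of Blue stones in $T_{k,\ell}$ connecting its two blue sides. Since these are initial segments of the two infinite blue sides of $Q_k$, the same chain wins for Blue on $Q_k$.

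For the harder direction I would argue the contrapositive: suppose for some $k$ that Red wins $T_{k,\ell}$ as second player for every $\ell$, and let $\rho_\ell$ be such a winning strategy. I would construct a Red strategy winning $Q_k$ as second player, contradicting the hypothesis. The construction proceeds in phases. In Phase $n$, Red deploys a winning strategy on $T_{k,\ell_n}$ (with $\ell_n$ chosen very large) starting from the current real position, thereby extending a red chain from the truncation out to the hypotenuse of $T_{k,\ell_n}$, hence to distance at least $n$ from the truncation. Each phase terminates in finitely many moves, so after $\omega$ phases Red has played countably many stones. Since only finitely many cells are adjacent to the truncation, by pigeonhole some specific truncation-adjacent cell $c$ serves as a common endpoint of infinitely many of these phase-chains, of arbitrary lengths. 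Now König's lemma, applied to the locally finite graph of red stones (each hexagon has at most six neighbors), yields an infinite red chain from $c$ going to infinity, so Red has won $Q_k$.

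The main obstacle is justifying that Red genuinely has a winning strategy on $T_{k,\ell_n}$ from the non-empty position at the start of Phase $n$. That position contains Red stones from previous phases, which help Red by monotonicity, but also Blue's interposed real moves, which may hurt. The resolution is to choose $\ell_n$ so large that Blue's bounded head start in $T_{k,\ell_n}$ can be absorbed by Red's winning play on an enormous trapezoid, intuitively because Blue's finitely many extra stones form a negligible portion of $T_{k,\ell_n}$. Making this precise likely requires a careful finitary monotonicity analysis; an alternative would be to extract Red's $Q_k$ strategy directly as a limit of the $\rho_\ell$ using compactness in the product of one-point compactifications of each response space, though one must then handle the possibility of responses drifting to infinity at some positions.
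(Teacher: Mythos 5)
Your first direction is correct and is essentially the paper's (the paper dispatches it in one sentence: place a trapezoid inside the truncated quadrant and play only within it); your dummy-move bookkeeping and the one-sided use of monotonicity there are fine.

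The second direction, however, has a genuine gap, and it is not merely the ``absorption'' lemma you flag as the main obstacle---the phase strategy itself fails outright. Blue's winning condition on the truncated quadrant $Q_k$ is \emph{open}: Blue wins at a finite stage by connecting the two boundary rays, and Red must avert this at every finite stage. During Phase $n$ your Red plays only inside $T_{k,\ell_n}$ and by construction ignores everything outside it. So Blue simply abandons the corner and builds an arc joining the two rays entirely beyond distance $\ell_n$; both endpoints lie on Blue's sides of $Q_k$, the arc needs only finitely many stones, and nothing in your strategy ever responds to it. Blue thereby wins $Q_k$ at a finite stage, and the pigeonhole-plus-K\"onig limit analysis never runs. (That endgame is also aimed at the wrong target: an infinite red chain present ``at time $\omega$'' is not Red's winning condition---Red wins by ensuring Blue never connects at any finite stage, and your argument nowhere establishes that.) The same example defeats the proposed repair of the obstacle you identified: if Blue has already completed (or nearly completed) a ray-to-ray arc, that arc lies inside \emph{every} sufficiently large trapezoid $T_{k,\ell_n}$, so no choice of huge $\ell_n$ absorbs Blue's head start. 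Monotonicity only says your \emph{own} extra stones never hurt; it gives no control over the opponent's extra stones, and finitely many well-placed blue stones are decisive regardless of board size, so ``a negligible portion of $T_{k,\ell_n}$'' is not a valid argument. The compactness fallback fails for the reason you half-acknowledge: a limit of the $\rho_\ell$ need not be winning, since Red's objective is the closed side of the game and the limit strategy's responses can drift to infinity.

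The missing idea is the paper's Theorem~\ref{Theorem.Hex-intrinsically-local}. The paper proves the hard implication in the positive direction: assuming Blue wins the truncated quadrant, fill the complement of the quadrant with stones so the quadrant game becomes an equivalent infinite Hex position; since the quadrant game is open for Blue, Blue wins this position in finitely many moves, so it has an ordinal game value; intrinsic locality then confines Blue's winning play to a fixed finite subboard, which fits inside a trapezoid whose short red side is at least the truncation length. This is why the paper defers the proof of the present theorem until after the locality theorem---without that tool (or some substitute for it), your contrapositive route does not go through.
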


The proof of the forward implication will rely on Theorem~\ref{Theorem.Hex-intrinsically-local}, and so we shall defer the proof of Theorem~\ref{Theorem.Truncated-quadrants-iff-trapezoids} until Section~\ref{Section.Game-values-in-infinite-Hex}. Meanwhile, although the two assertions mentioned in Theorem~\ref{Theorem.Truncated-quadrants-iff-trapezoids} are equivalent, we do not know whether they are true! 

\begin{question}\label{Question.Truncated-quadrants}
Does Blue have a first-player winning strategy on all truncated quadrants?
\end{question}

The question remains open.\goodbreak

\subsection{Alternative strong win condition}

\begin{wrapfigure}[11]{r}{.32\textwidth}\vskip-1ex
\hfill
\includegraphics[width=.28\textwidth]{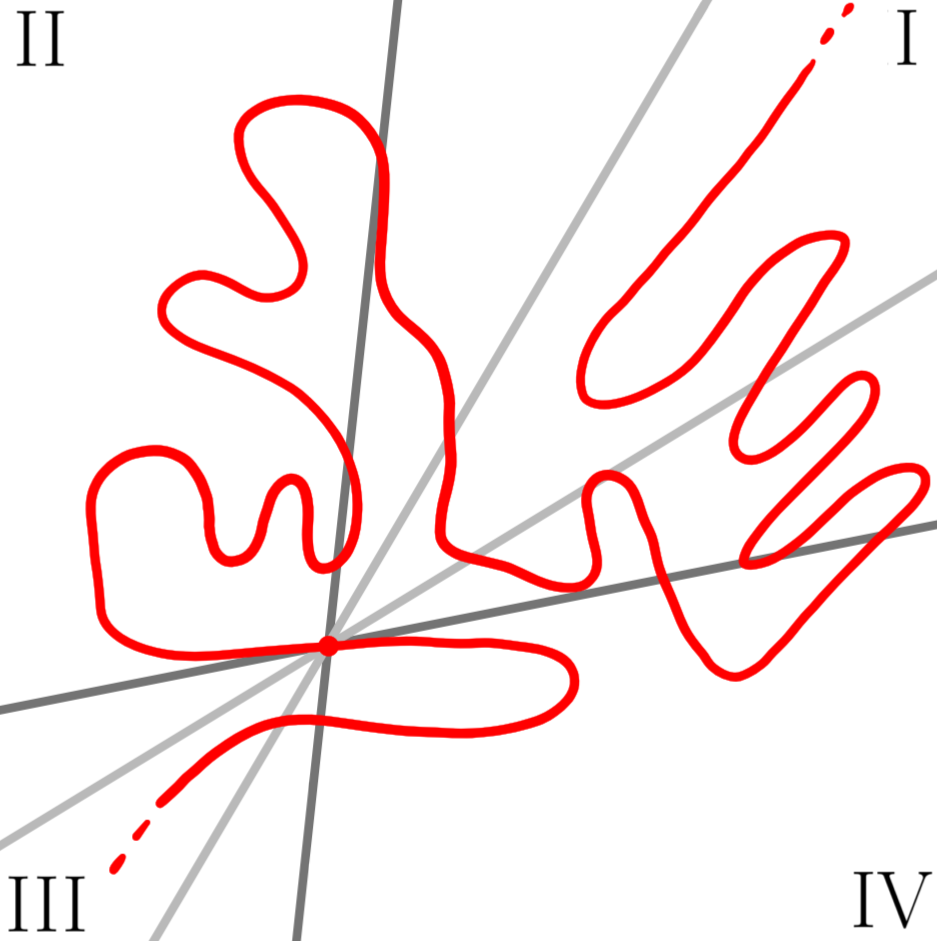}
\captionsetup{style=rightside,font=footnotesize} \caption{A strong Red win}
\label{Figure.sketch-strong-win}
\end{wrapfigure}
Let us now prove a version of the finite advantage conjecture without the trapezoid or truncated quadrant assumptions, but instead with respect to a strengthened winning condition, illustrated in Figure~\ref{Figure.sketch-strong-win}. Specifically, we define that a play of infinite Hex is a \emph{strong win} for Red, if there is a $\mathbb{Z}$-chain of adjacent red hexagons, such that for any positive-slope line in the plane and any point on that line, (i) there is a steeper line through that point, such that the red path touches that steeper line only finitely many times and ultimately crosses from left to right; and (ii) there is a shallower-slope line through that point such that the red path touches it only finitely many times and ultimately crosses it from below to above. Similarly for Blue, with the corresponding change in direction.

If a player wins according to this criterion, then they have also won according to the standard winning condition, which requires them merely to eventually enter the appropriate quadrants determined by any choice of center origin point. So any strong win is also a standard win. But the strong win condition is stronger, since it requires the winning path not merely to enter the quadrant, but to enter a linearly focused ``quadrant'' where the axes form a cone of strictly less than $90^\circ$.

\begin{wrapfigure}{l}{.34\textwidth}\vskip-2ex
\includegraphics[width=.3\textwidth]{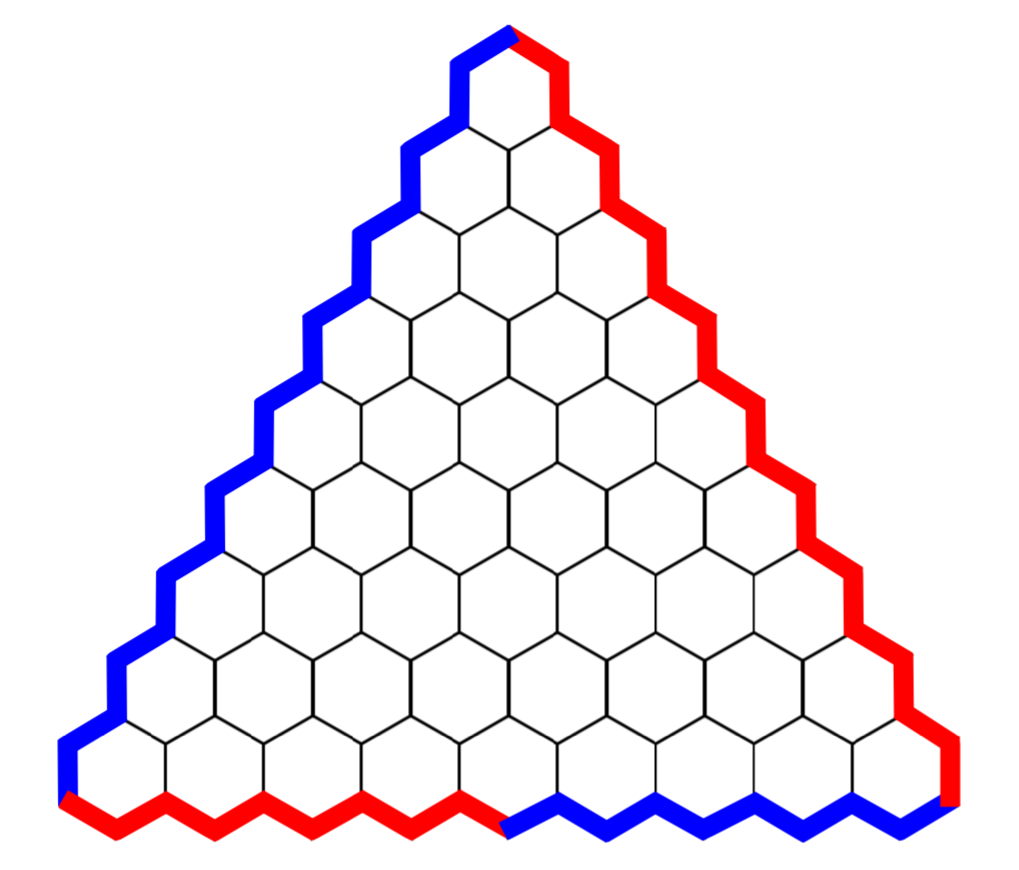}\hfill
\captionsetup{style=leftside,font=footnotesize} \caption{Triangular board}
\label{Figure.triangular-board}
\end{wrapfigure}
Let us begin by recalling that the finite Hex theorem (Theorem~\ref{Theorem.Finite-hex}) applies generally to any symmetric finite Hex board, a board that looks the same with respect to either player. On any finite board, exactly one player will win, and on such a symmetric board, the strategy-stealing argument of Theorem~\ref{Theorem.Finite-hex} shows that it must be the first player who has a winning strategy. In particular, the first player has a winning strategy in any finite equilateral triangular board with boundary coloring as in Figure~\ref{Figure.triangular-board}, as well as on any analogously symmetric isosceles triangular board.

\begin{theorem}
 There is no finite advantage in infinite Hex, when using the strong win condition for winning. That is, for any given finite position in infinite Hex, both players have drawing strategies for play proceeding from that position with respect to the strong win condition for winning.
\end{theorem}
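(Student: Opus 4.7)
The plan is to describe a drawing strategy for Blue starting from any finite initial position $F$; by the symmetry of the strong win condition (swapping the two colors together with a reflection of the board swaps the positive-slope and negative-slope asymptotic winning directions), the symmetric construction gives a drawing strategy for Red. The key tool is the first-player winning strategy on finite symmetric triangular Hex boards, invoked just above, which may be deployed inside sub-boards of the infinite board to produce finite blue obstacle chains at positions of Blue's choosing.

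Since $F$ occupies only a bounded region, Blue fixes an origin $O$ far from $F$ so that the upper-right quadrant at $O$ (relevant for a Red strong win at the positive end) is entirely disjoint from $F$. Blue then pre-selects a countable family of pairwise disjoint symmetric triangular sub-boards $\{T_{n,k}\}$ arranged in levels: at level $n$ the sub-boards $T_{n,1},\ldots,T_{n,K_n}$ tile the angular cross-section of the quadrant at radial distance approximately $r_n$, with $r_n\to\infty$, so that every point of the quadrant at radial distance near $r_n$ lies inside some $T_{n,k}$. Each triangle is oriented so that its winning Blue-to-Blue connection will cut transversally across the radial direction from $O$. Blue processes these sub-boards in sequence: at each stage Blue chooses the next candidate $T_{n,k}$ not yet touched by any Red stone---always possible, since each level maintains an infinite reserve of candidates while Red's play remains finite at every stage---and plays the first-player winning strategy on it, treating Red moves outside the active sub-board as imaginary Red moves inside it. Each sub-game terminates in finitely many moves, so across all countably many sub-boards the construction completes within $\omega$ total moves, leaving Blue with infinitely many blue obstacle chains, one inside each processed triangle.

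Suppose toward contradiction that Red achieves a strong win against this strategy. Then the positive branch of the winning red $\mathbb{Z}$-chain has an asymptotic direction $\theta_+$ and is eventually confined within a cone of opening strictly less than $90^\circ$ around the ray from $O$ in direction $\theta_+$. For all sufficiently large $n$, the chain must traverse the annular band at radius $\approx r_n$ while remaining inside this cone, and so it passes through some sub-board $T_{n,k_n}$. Inside $T_{n,k_n}$, the completed blue winning connection is a monochromatic blue chain joining two designated sides of the triangle, and by the uniqueness half of Theorem~\ref{Theorem.Finite-hex} this blue chain topologically separates the triangle's interior into two components. The red chain---being disjoint from Blue's stones---cannot cross the blue connection, so it must exit $T_{n,k_n}$ through a boundary side other than the one it entered, deflecting its angular trajectory by at least the transverse angular width of the level-$n$ tile. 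Repeating this forced deflection at every sufficiently large $n$ either drives the chain permanently outside the original cone (contradicting its cone-confinement and hence the strong win) or forces the chain to re-enter the cone, thereby crossing at least one of its bounding lines infinitely many times (contradicting the ``ultimately crosses from left to right / below to above'' clause of the strong win condition).

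The hard step is the precise geometric calibration of the tiling: one must choose the radial thicknesses $w_n$ and angular widths of the level-$n$ triangles so that the accumulated deflections produced by successive detours do not decay below the cone half-angle; for example, keeping the ratio $w_n/r_n$ bounded below ensures that within finitely many levels the chain is knocked irrevocably out of any predetermined cone. A minor bookkeeping subtlety is the ``skip polluted candidates'' scheme guaranteeing that Blue always plays first on each activated sub-board; this follows trivially from the infinite reserve of candidates at each level, together with the finiteness of Red's play at every stage.
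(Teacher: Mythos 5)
Your overall architecture matches the paper's in spirit---blue obstacle chains produced by first-player wins on pre-placed empty triangular sub-boards, activated one at a time with imaginary Red moves---but your specific blocking scheme has a fatal hole, namely the ``skip polluted candidates'' rule. Each of your levels contains only finitely many triangles (the $T_{n,1},\ldots,T_{n,K_n}$ tile a compact angular cross-section at radius $r_n$), so the claimed ``infinite reserve of candidates'' \emph{within a level} is false; and, worse, a skipped triangle is a permanent gap in that level's barrier. Since Blue must spend many moves completing each triangle while Red spends a single stone to pollute one, Red can place one stone in one triangle of each level along a fixed ray, always far ahead of Blue's sequential processing, thereby creating a radial corridor of forever-skipped triangles, and then build a chain to infinity through that corridor. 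The chain never meets a blue connection, so your deflection argument never gets started. Nor can you repair this by letting Blue play inside polluted triangles: with a Red stone already present, the first-player winning strategy no longer applies---by monotonicity extra stones never hurt Red, and whether Blue can still win such handicapped finite boards is essentially the paper's open trapezoid/truncated-quadrant problem (Theorem~\ref{Theorem.If-trapezoids-then-draw}, Question~\ref{Question.Truncated-quadrants}), so it cannot be assumed here.

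Even granting a hole-free tiling, your endgame is not established: the strong win condition does not yield an asymptotic direction $\theta_+$, only eventual confinement to a cone of opening less than $90^\circ$ anchored at some point, and the claim that each triangle encounter deflects the chain by at least the transverse angular width, with non-decaying accumulation of deflections, is precisely the hard geometric content, which you defer to an unspecified ``calibration.'' The paper's proof avoids both difficulties by indexing obstacles differently: to each pair consisting of a center point and two positive rational slopes (one nearly vertical, one nearly horizontal) it assigns a \emph{single} disjoint empty triangle spanning the whole associated cone---bottom edge with midpoint below the shallow line, apex above the steep line, Blue aiming to join the right half of the bottom side to the left side---so that one completed triangle is by itself an obstacle which any chain eventually confined to a narrower cone at that center must cross; no accumulation of deflections is needed. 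Blue then requires only infinitely many completed triangles per center with cone angles approaching $90^\circ$, chosen adaptively from an infinite supply; since at every finite stage Red has touched only finitely many, a fresh empty candidate always exists, and the corridor attack that defeats your tiling simply cannot arise.
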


In other words, both players have strategies to prevent the other player from achieving a strong win, even when finitely many stones have already been placed.\goodbreak

\begin{wrapfigure}{r}{.55\textwidth}\vskip-2ex
\hfill
\includegraphics[width=.5\textwidth]{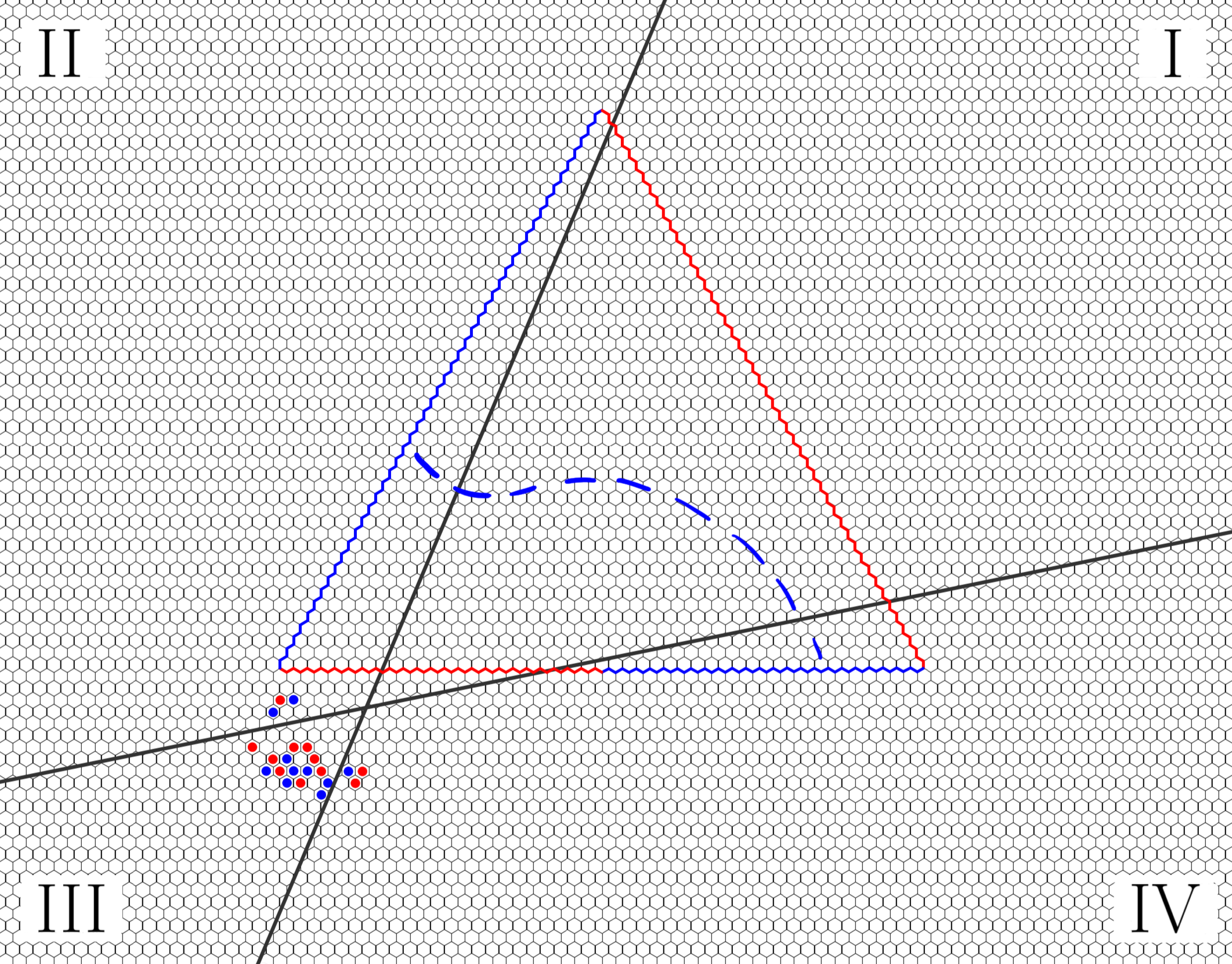}
\captionsetup{style=rightside,font=footnotesize}
\caption{Triangle spanning a linear cone}
\label{Figure.Triangle-proof}
\end{wrapfigure}
\noindent\emph{Proof.} 
It will suffice for us to argue that Blue has such a drawing strategy, for then a symmetric argument applies for Red. Suppose that we have an infinite Hex board position with finitely many stones already placed. For each possible center point and each possible pair of lines with positive rational slope through that point, one nearly vertical and one nearly horizontal, we shall place a certain empty isosceles triangle onto the Hex board, with a horizontal bottom edge having its midpoint lying below the nearly horizontal line and upper vertex above the nearly vertical line, as illustrated in Figure~\ref{Figure.Triangle-proof}. Blue will adopt the strategy of playing on such triangles so as to connect the right-half of the bottom side with the left side of the triangle, thereby forming an obstacle that Red will have to get around in order to achieve a strong win.

Since there are only countably many center points and pairs of lines with positive rational slope, we can place disjoint such triangles for every such choice of center and lines. With the triangles placed, Blue will now play by selecting one of them as ``active,'' making a first move according to the winning strategy for the triangle, and continuing to play in that triangle until the blue sides are connected (inventing imaginary Red moves, if necessary, when Red plays outside). Having won on that triangle, Blue will activate a next (empty) triangle, and proceed to win on that one, and so on. 

Since every choice of center point has infinitely many pairs of lines with rational slope, approaching as close as desired to $90^\circ$, there will be infinitely many triangles paired with any given center point. And so Blue will be able to arrange his choices of which triangles to activate in such a way that after infinitely many moves, he will have activated infinitely many triangles for each center point, with the cone angle of the lines approaching $90^\circ$. After infinite play, therefore, Blue will have placed infinitely many obstacles blocking any particular choice of center and axis lines, and this will prevent a strong win for Red. This is therefore a drawing strategy for Blue with respect to the strong win condition. 
\hfill\fbox{}\medskip\goodbreak 

Notice that this strategy works also with transfinite play. Indeed, Blue will build all the blue obstacles during the first $\omega$ many moves, and so this strategy has already
prevented a strong win for Red, even if Red should make further moves at transfinite stages beyond $\omega$.\goodbreak

\begin{wrapfigure}[14]{r}{.28\textwidth}
\vskip-2ex\hfill
\includegraphics[width=.25\textwidth]{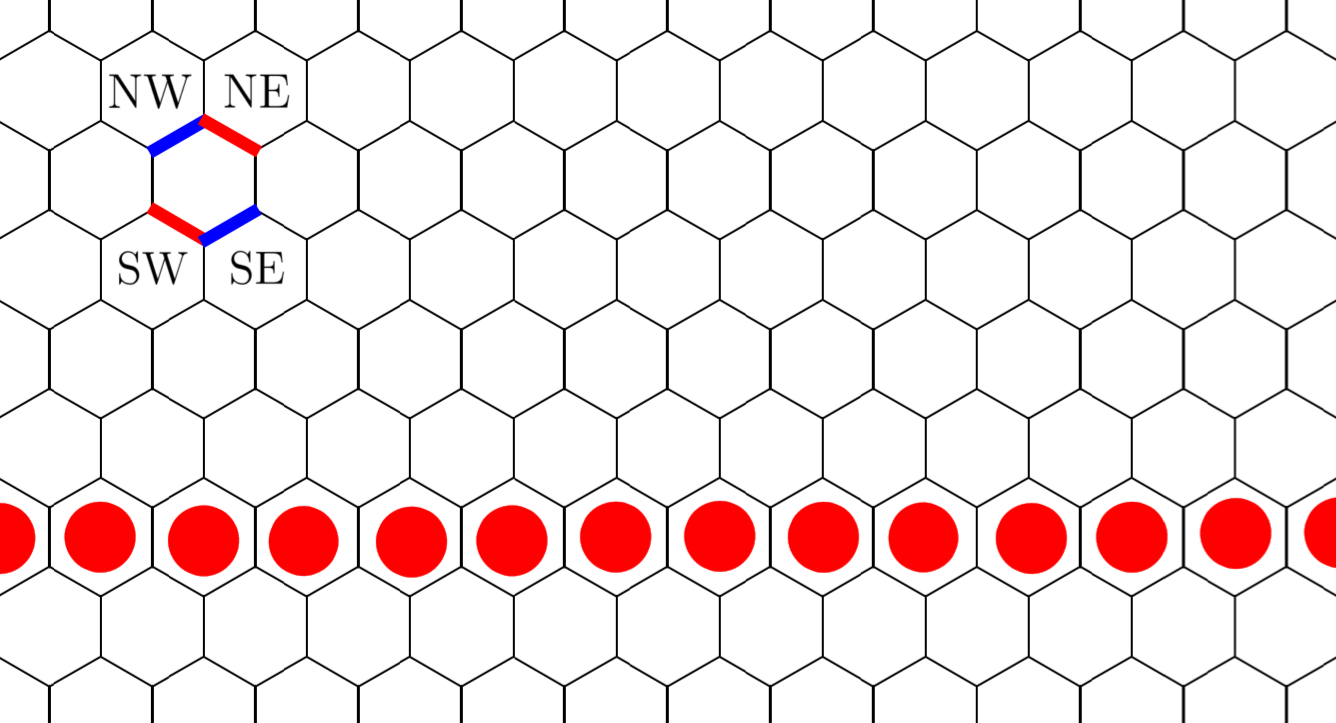}
\captionsetup{style=rightside,font=footnotesize} \caption{Line advantage}
\label{Figure.horizontal-line}
\medskip\medskip\medskip\medskip\medskip
\hfill
\includegraphics[width=.25\textwidth]{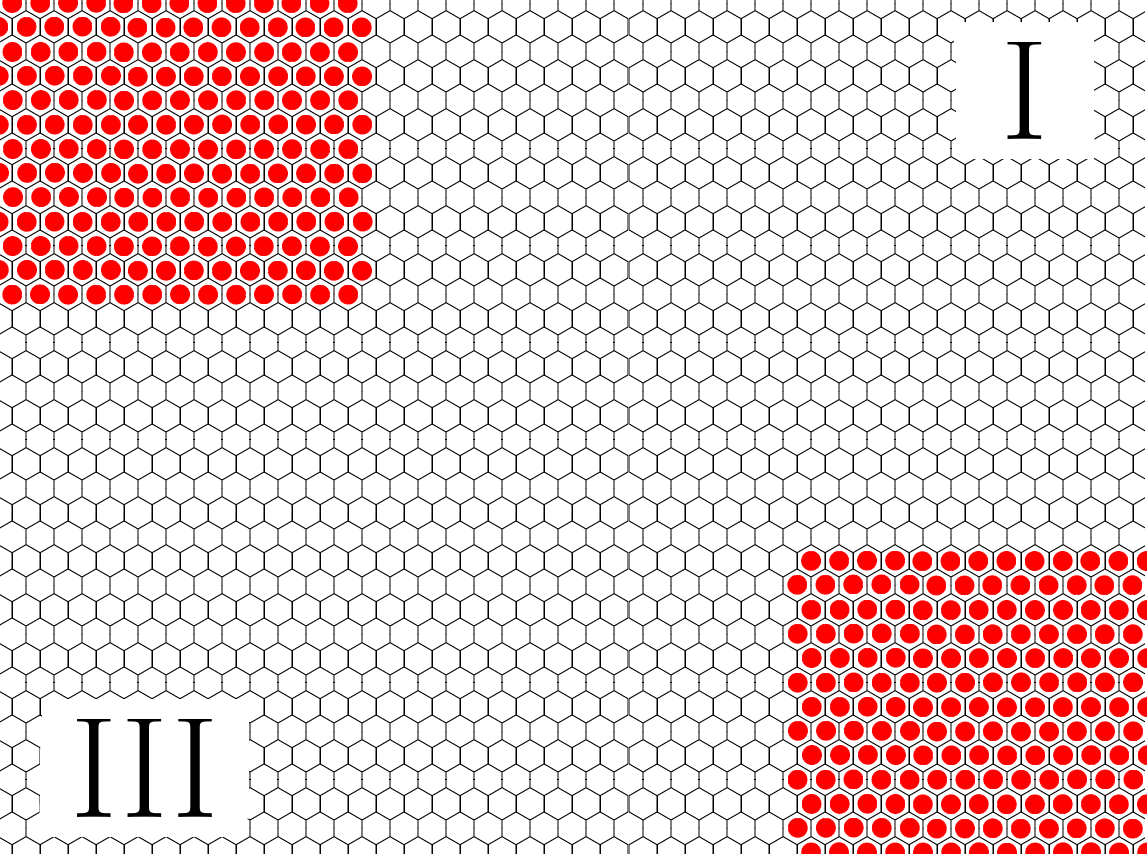}
\captionsetup{style=rightside,font=footnotesize} \caption{Quadrants advantage}
\label{Figure.quadrants-question}
\end{wrapfigure}
The strategy also works for Blue to prevent a some-origin strong win for Red, in the sense that Blue has ensured that there is not a single point at which Red has achieved the strong win condition for lines at that point.

Let us conclude this section with two questions we don't know how to answer. 
\begin{question}\label{Question.Infinite-advantage}
Can Red win infinite Hex when starting with the advantage of an infinite horizontal red line as in Figure~\ref{Figure.horizontal-line}? Or with an infinite red line with negative slope, which does not prevent a Blue win? Or with a horizontal and `vertical' line together?
\end{question}

\enlargethispage{10pt}
\begin{question}\label{Question.Quadrant-advantage}
Can Red win infinite Hex when starting with the advantage of two red quadrants as in  Figure~\ref{Figure.quadrants-question}? \end{question}

\section{Game values in infinite Hex}\label{Section.Game-values-in-infinite-Hex}

The theory of transfinite game values has been explored in infinite chess~\cite{hamkins12}\cite{hamkins14}\cite{hamkins17}, infinite draughts~\cite{Leonessi2021:MSc-dissertation-transfinite-game-values-in-infinite-games}\cite{HamkinsLeonessi:Transfinite-game-values-in-infinite-draughts}, infinite Go~\cite{Hamkins2018.MO299504:Position-in-infinite-Go} and several other infinite games. We refer the reader to that literature for the basic theory of ordinal game values, but we can quickly review the central ideas. An infinite game is \emph{open} for a player, if every winning play for that player is  settled as a definite win for that player by some finite stage of play---that is, after this finite stage, the player cannot avoid winning no matter how (legal) play continues. Infinite chess, for example, is an open game, because the checkmate, when it occurs, does so at a finite stage of play. In any such open game, the ordinal \emph{game value} for that player of a position is defined by recursion: a position has value $0$ for the open player, if the game is already won, in the sense that no subsequent legal moves could prevent a win; if a position is the open player's turn, and a move can be made to a position with value $\alpha$, minimal amongst those with a value, then the value of the original position is $\alpha+1$; if it is the opponent's turn and all plays by the opponent lead to a valued position, then the value of the position is the supremum of those values. The fundamental observation of game values is that if a game position has a game value, then the open player can win by playing the value-reducing strategy---play so as to reduce value; since there is no infinite strictly decreasing sequence of ordinals, eventually the value will become $0$ and the game is won. Similarly, in any open game, if a position has no game value, then the closed player can avoid defeat by playing the value-avoiding strategy.

We find it interesting to note that the game value recursion can be defined for any game, not just open games, and a game position will have a value for a player if and only if the player can force a win in finitely many moves (not necessarily uniformly bounded finite). In particular, if a position in any game has a defined game value, then the value-reducing strategy will be an optimal winning strategy for that player from that position. When a position has no defined value, meanwhile, then the value-avoiding strategy will ensure that the opposing player can avoid all positions that are definite losses at a finite stage of play. In open games, this is the same as avoiding defeat altogether, and so the game value analysis provides a complete strategic account of open games.

Much of the previous literature we mentioned had taken on the goal of exhibiting positions in those games whose game values were various large countable ordinals---as large as could be found. In infinite chess, for example, Evans and Hamkins \cite{hamkins14} presented a position with game value $\omega^3$ and conjectured that every countable ordinal arises as the game value of a position in infinite chess. The situation was slightly improved in \cite{hamkins17} with a position having game value~$\omega^4$. Meanwhile, \cite{hamkins14} proves that every countable ordinal arises as the game value of a position in infinite 3D chess, and our recent work~\cite{Leonessi2021:MSc-dissertation-transfinite-game-values-in-infinite-games, HamkinsLeonessi:Transfinite-game-values-in-infinite-draughts} proves the corresponding result in the case of infinite draughts. In exciting new work, Matthew Bolan \cite{Bolan2023.MO440147:Checkmate-in-omega-moves} has announced that every countable ordinal arises as the game value of a position in infinite chess, thus establishing the conjecture of Evans and Hamkins \cite{hamkins14}.

In this section, we should like to consider the question of which game values arise as the value of positions in infinite Hex. 

\begin{question}
 Which ordinal game values arise for positions in infinite Hex?
\end{question}

\begin{wrapfigure}{r}{.39\textwidth}
\hfill
\includegraphics[width=.36\textwidth]{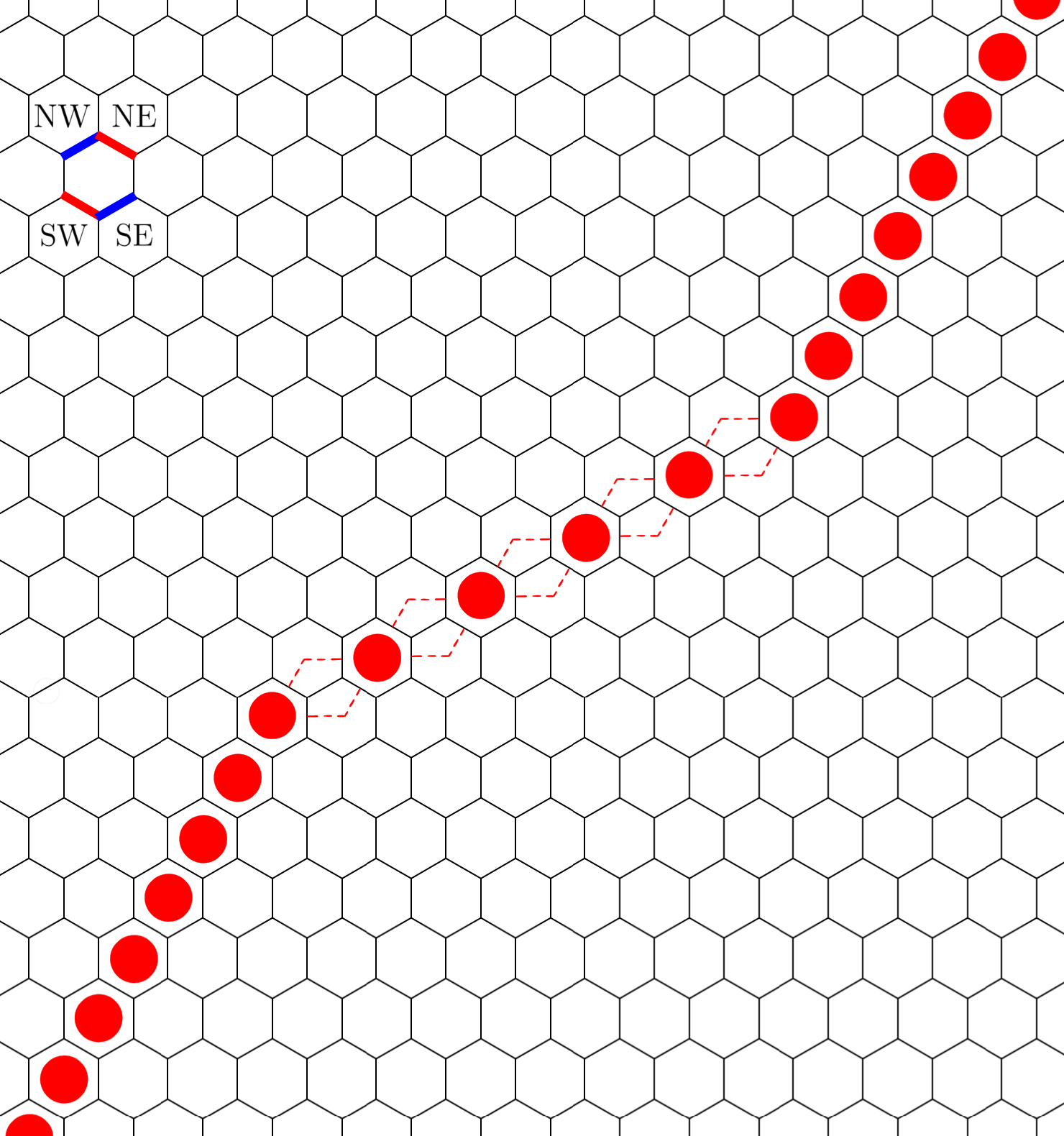}
\captionsetup{style=rightside,font=footnotesize}
\caption{Game value $5$ for Red}
\label{Figure.Game-value-5}
\end{wrapfigure}
The first main difficulty for this question and the theory of game values in infinite Hex is that unlike infinite chess and infinite draughts, infinite Hex is not in general an open game. The winning condition of infinite Hex, starting from an empty board, is inherently infinitary---one does not win after only finitely many moves. Thus, game values do not tell the whole story, although in positions for which they are defined the value-reducing strategy will still be a winning strategy.

\enlargethispage{20pt}
For a position in infinite Hex to have a defined game value for Red, say, then by the value-reducing strategy, Red would be able to force a win in finitely many moves. So the position would have to already contain infinitely many stones, and indeed it would have to contain a mostly completed winning $\mathbb{Z}$-chain, missing only finitely many intermediate connecting stones. Red aims to connect the two ends of the chain together and thereby win. The position in Figure~\ref{Figure.Game-value-5}, for example, has game value $5$, since Red can aim to complete the five bridges, which will take $5$ moves, and there is nothing Blue can do to prevent it. One can similarly create positions in infinite Hex with any desired finite game value.\goodbreak

\subsection{Infinite Hex is intrinsically local}
What we aim to prove about infinite Hex is that only finite game values occur. In this sense, the truly transfinite game value phenomenon, which has been so abundantly exhibited for infinite chess, infinite draughts, and infinite Go, simply does not occur in infinite Hex.\goodbreak

\begin{theorem}\label{Theorem.Hex-finite-values}
Only finite game values occur for positions in infinite Hex. 
\end{theorem}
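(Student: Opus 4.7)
The plan is to prove the theorem via an \emph{intrinsic locality} principle for game-valued positions: any position with a defined ordinal game value admits a value-reducing winning strategy that depends only on a fixed finite region of the board, and hence the game effectively reduces to a finite Hex game whose game value is necessarily finite.

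First I would extract structural information from the hypothesis. Assume without loss of generality that $P$ has game value $\alpha$ for Red, so Red possesses a value-reducing winning strategy $\sigma$ that terminates in finitely many moves along every branch of play. Along any individual winning branch, Red places only finitely many stones, yet the final position must contain a full winning $\mathbb{Z}$-chain made of infinitely many red stones; consequently, all but finitely many stones of the winning chain must already be present in $P$. Thus $P$ contains an infinite ``near-$\mathbb{Z}$-chain'' $\Gamma$ of red stones, which $\sigma$ merely needs to complete by filling in a finite list of gap cells.

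Next I would pin down a fixed finite region $R$ of the board such that $\sigma$ only ever plays in $R$ and only ever reacts to Blue moves inside $R$. The region $R$ is built naturally from the finitely many gaps of $\Gamma$ together with a bounded local neighborhood of each gap, allowing Red room to build a detour when Blue blocks a gap. By monotonicity of Hex, Blue moves outside $R$ cannot disrupt any completion of $\Gamma$ attempted by Red inside $R$, so $\sigma$ can be taken to ignore them; and because the hexagonal lattice admits only finitely many simple paths between two fixed cells within a bounded neighborhood, the set of cells ever played by $\sigma$ is finite, making $R$ finite. With $R$ fixed and finite, the game from $P$ becomes strategically equivalent to a finite Hex game played on $R$, which has only finitely many plays and hence a finite ordinal game value; this forces $\alpha < \omega$. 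The symmetric argument handles positions with game value for Blue.

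The principal obstacle is the second step: establishing uniform finiteness of $R$. A priori one might fear that Blue could systematically outflank Red's detour plans, producing an unbounded cascade of alternative completion routes. Resolving this requires verifying that the local combinatorics of completing a near-$\mathbb{Z}$-chain in the hexagonal lattice admits only finitely many genuinely distinct completion attempts, so that Blue's productive blocking moves are confined to a bounded region around the original gap list. This locality phenomenon, which I expect to be the technical heart of the argument, is naturally stated and proved in the general framework of simple stone-placing games mentioned in the abstract.
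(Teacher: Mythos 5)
Your overall plan---prove that any game-valued position is \emph{intrinsically local}, then note that locality forces a finite value---is exactly the paper's route (Theorem~\ref{Theorem.Hex-intrinsically-local}), but your proof of the locality step has a genuine gap, one you half-acknowledge yourself. You assert that the set of cells ever played by the value-reducing strategy $\sigma$ is finite ``because the hexagonal lattice admits only finitely many simple paths between two fixed cells within a bounded neighborhood.'' This is circular: the bounded neighborhood is precisely what you must produce, and nothing about a value-reducing strategy is a priori local---Blue's successive blocking moves could in principle force Red's replies to drift without bound. Worse, when the value $\alpha$ is transfinite there is no uniform bound on the length of play across branches (that is exactly what a transfinite value means), so no single finite gap-list-plus-detour analysis of one near-$\mathbb{Z}$-chain $\Gamma$ can cover all branches of $\sigma$. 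Your near-chain observation (all but finitely many stones of the eventual winning chain already lie in the position) is true branch-by-branch---the paper makes the same remark as motivation---but the chain, its gaps, and the required detours vary with Blue's play, and the position may contain infinitely many candidate near-chains among which optimal Red play switches.

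What closes the gap in the paper is a transfinite induction on the game value $\alpha$, using no Hex geometry at all. In the key case, Blue to move at value $\alpha$: imagine Blue playing an arbitrary tile $w_0$; the value-reducing reply $v_0$ yields a position of value strictly below $\alpha$, hence inductively local with finite region $D_0 \ni w_0, v_0$. If Blue actually plays outside $D_0$, Red ignores the move, pretends Blue played $w_0$, answers $v_0$, and thereafter plays only in $D_0$, inventing imaginary Blue moves as needed---this is where monotonicity and the nondisjointness of the two players' winning configurations enter, the one ingredient you did correctly identify. If instead Blue plays on some $w_i \in D_0$, the value-reducing reply $v_i$ again drops the value, giving a finite region $D_i$, and the finite union $D = \bigcup_{w_i \in D_0} D_i$ witnesses locality at $\alpha$. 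This inductive finite-union construction is the missing idea: your deferred claim that there are ``only finitely many genuinely distinct completion attempts'' is essentially the theorem itself in disguise, and without the induction on $\alpha$ your reduction to a finite Hex game on $R$ never gets off the ground.
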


We shall indeed establish a much stronger result about such positions. Namely, we claim that every position of infinite Hex for which a player can force a win in finitely many moves (not necessarily uniformly bounded)---these are precisely the positions with an ordinal game value, including transfinite game values---is \emph{intrinsically local}, meaning that there is a finite part of the board such that the winning player can win by playing only in that region and paying attention only to moves of the opponent in that region. It follows that the game value of that position is in fact finite, bounded by (half) the size of that finite region.

\begin{theorem}\label{Theorem.Hex-intrinsically-local}
If a position in infinite Hex has a game value, then it is intrinsically local.
\end{theorem}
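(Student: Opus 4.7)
The plan is to establish intrinsic locality by reducing it to the existence of a Red-winning strategy in a finite local sub-game, and then to secure the latter via a compactness argument on Blue's strategies. Throughout, assume $p$ has game value for Red (the argument for Blue is symmetric). For each finite region $R$ of the board, I would define the \emph{local game} $G_R(p)$ as follows: starting from $p$, the players alternate placing stones in empty cells of $R$ only, and Red is declared the winner if the resulting full board---that is, $p$ together with the new stones in $R$---contains a red winning $\mathbb{Z}$-chain in the sense of the standard winning condition. Since $R$ is finite, $G_R(p)$ is a finite game of perfect information, so by Zermelo's theorem either Red has a winning strategy or Blue has a strategy that prevents Red from winning.

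The easy half of the reduction is that if Red has a winning strategy in some $G_R(p)$, then $p$ is intrinsically local with witness $R$. Indeed, the committed strategy in the infinite game---play only in $R$ following the $G_R(p)$-winning response to the moves Blue plays in $R$, and ignore Blue's moves outside $R$---wins, because stones in Hex are permanent, so Blue's outside moves can neither disturb the infinite tails of the winning chain (which already lie in $p$) nor interact with the finite middle portion completed inside $R$. The main step is therefore the converse: showing that such an $R$ always exists when $p$ is game-valued.

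For this I would argue by contradiction: suppose that for each finite $R$, Blue has a non-losing strategy $\tau_R$ in $G_R(p)$. Fix an exhausting increasing sequence $R_1 \subseteq R_2 \subseteq \cdots$ of finite regions covering the entire board, and for each $n$ extend $\tau_{R_n}$ to a full Blue strategy $\hat\tau_n$ in the infinite game by some fixed convention outside $R_n$ (e.g., always play the lexicographically first legal cell). A standard diagonal argument over the countable family of finite Red histories then produces a pointwise-limit Blue strategy $\tau^\infty$ with the property that on every fixed finite Red history, $\tau^\infty$ agrees with all but finitely many $\hat\tau_n$. Playing Red's value-reducing strategy against $\tau^\infty$ will produce, by the game-value hypothesis, a play that finishes after finitely many Red moves with the board already exhibiting a red winning $\mathbb{Z}$-chain, witnessed by $p$ together with a finite region $R^*$ containing the newly placed stones. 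For $n$ large enough that $R^* \subseteq R_n$ and $\tau^\infty$ agrees with $\hat\tau_n$ on the moves of this play, the play is then a legal play of $G_{R_n}(p)$ in which Red wins, contradicting the choice of $\tau_{R_n}$.

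The main obstacle will be the compactness construction of $\tau^\infty$ together with the verification that the resulting finite winning play against it genuinely fits inside some $G_{R_n}(p)$---that is, that the Blue moves produced along the way happen to lie in $R_n$. This requires choosing the extensions $\hat\tau_n$ uniformly and diagonalizing jointly over both finite histories and the growing regions $R_n$, exploiting the crucial structural fact that a Red win in Hex is witnessed at a finite stage by only finitely many new stones, on top of the infinite tails already present in $p$. This is also the generic feature of simple stone-placing games, which is why, as noted in the abstract, the same argument establishes intrinsic locality in that full generality.
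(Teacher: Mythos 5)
Your reduction has a sound easy half: if Red wins the finite local game $G_R(p)$ for some finite $R$, then permanence of stones and the impossibility of disjoint red and blue winning configurations do let Red win the infinite game while attending only to $R$ (with the usual imaginary-move bookkeeping when Blue plays outside $R$). The genuine gap is in the hard half, at exactly the step you flag: the limit strategy $\tau^\infty$ need not exist. Each position offers infinitely many legal moves, so the space of Blue strategies is a product of countably infinite discrete sets and is not compact. For a fixed finite history $h$, the values $\hat\tau_n(h)$ may be pairwise distinct---for instance, $\tau_{R_n}$ may answer a fixed Red threat with a blocking stone out near the rim of $R_n$, so that the replies escape to infinity with $n$---and then \emph{no} strategy agrees with all but finitely many $\hat\tau_n$ at $h$; a diagonal argument over countably many histories cannot manufacture such agreement. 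If you instead define $\tau^\infty(h)$ arbitrarily at divergent coordinates, the pullback fails precisely where you need it: the finite winning play against $\tau^\infty$ is then not consistent with $\tau_{R_n}$, hence is not a play of $G_{R_n}(p)$ against that strategy, and no contradiction results. Any repair would require showing that Blue replies drifting to infinity are eventually irrelevant and can be replaced by nearby ones---but that assertion is essentially the theorem itself (distant Blue stones pose no threat to a localized Red plan), so the compactness route begs the question. Note also that your hard direction uses only the finite-witness property of wins, never the nondisjointness of winning configurations; some such monotonicity hypothesis must enter, since ``game value'' alone does not yield locality in games lacking this structure.

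The paper avoids compactness entirely, arguing by induction on the ordinal game value $\alpha$. The Red-to-move case is immediate: play a value-reducing move and augment the inductively obtained finite region by that one tile. In the Blue-to-move case, imagine Blue playing some tile $w_0$; Red's value-reducing reply $v_0$ leads to a position of strictly smaller value, hence intrinsically local with a finite witness $D_0$ containing $w_0$ and $v_0$. For each of the \emph{finitely many} tiles $w_i\in D_0$, the same reasoning gives a reply $v_i$ and witness $D_i$, and $D=\bigcup_{w_i\in D_0} D_i$ is finite and witnesses locality for the original position: a Blue move inside $D_0$ is answered inside the corresponding $D_i$, while a Blue move outside $D_0$ is simply ignored---Red pretends Blue played $w_0$ and proceeds in $D_0$---which is legitimate exactly because no Blue winning configuration is disjoint from a Red one, so Blue's outside stones can never constitute a threat Red must answer. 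This is where the stone-placing structure does its work, collapsing Blue's infinite branching to the finite set $D_0$; it is the reduction your diagonal limit was meant to deliver but cannot.
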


Theorem~\ref{Theorem.Hex-finite-values} follows as an immediate consequence of Theorem~\ref{Theorem.Hex-intrinsically-local}, because if a player can win by playing on and responding to plays on a fixed finite subboard, then the game value is bounded by half the size of that subboard, since the game will be over and won (with correct play) when the subboard is filled.

\medskip\noindent\emph{Proof of Theorem~\ref{Theorem.Hex-intrinsically-local}.} 
Suppose that we are given an infinite Hex position with a defined game value $\alpha$ for Red. We shall prove by induction on $\alpha$ that this position is intrinsically local. If the game value $\alpha$ is zero, then the game is already won for Red, of course, and we can take the finite subboard to be empty. So we may assume the game value $\alpha$ is not zero.

\begin{wrapfigure}{r}{.28\textwidth}
\vskip-2ex\hfill
\includegraphics[width=.25\textwidth]{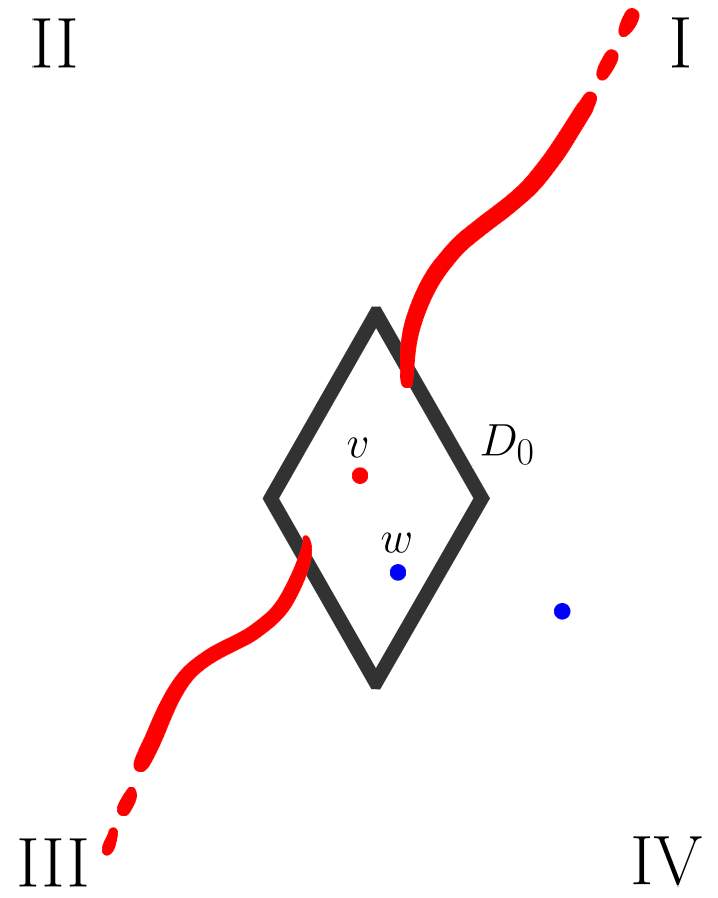}
\captionsetup{style=rightside,font=footnotesize}
\caption{Blue plays outside $D_0$}
\label{Figure.Hex-intrinsically-local-D_0}
\end{wrapfigure}
If it is Red's turn to play, then Red can play to a position with strictly lower game value, and by induction that position is intrinsically local. By augmenting that finite subboard with the Hex tile of the initial move, we have thus found a finite subboard fulfilling the intrinsically local condition for the original position.

The remaining case occurs when the game value is nonzero and it is Blue's turn to play. Every play by Blue leads to some position with game value at most $\alpha$. Let Red imagine temporarily that perhaps Blue might place a stone on some (arbitrary) empty Hex tile $w$. Using the value-reducing strategy, Red would be able to respond to this move with a play on some Hex tile $v$, resulting in a position with strictly lower game value than $\alpha$, and hence intrinsically local. So there is some finite region $D_0$ supporting the intrinsically local winning play after Blue $w$ and Red $v$, as in Figure~\ref{Figure.Hex-intrinsically-local-D_0}. We may assume $w,v\in D_0$. If in the actual game, Blue should by some miracle happen to play on tile $w$, as Red had imagined, then indeed Red could win by playing only on $D_0$, and so this finite subboard would have exhibited the desired intrinsic locality condition.\goodbreak

But of course, in the actual game, perhaps Blue doesn't play on tile $w$---there might be infinitely many other tiles on which Blue could play. If Blue were to play outside $D_0$, however, then Red can actually ignore that move, and proceed as though Blue had played on $w$, placing a red stone on tile $v$ and then proceeding to play on and respond to moves only in $D_0$, inventing imaginary moves in $D_0$ whenever Blue plays outside, as illustrated in Figure~\ref{Figure.Hex-intrinsically-local-D_0}.\goodbreak

A more troublesome case occurs, however, if in the actual game Blue should begin by placing a stone in $D_0$, but not on tile $w$. For each tile $w_i$ in $D_0$ in the original position (taking $w=w_0$), if Blue were to place a blue stone at $w_i$, then by the value-reducing strategy, there is a Red reply on some tile $v_i$, leading to a position with game value strictly less than $\alpha$, which is therefore intrinsically local. So there would be some finite subboard $D_i$ supporting intrinsically local winning
\begin{wrapfigure}{l}{.34\textwidth}
\includegraphics[width=.3\textwidth]{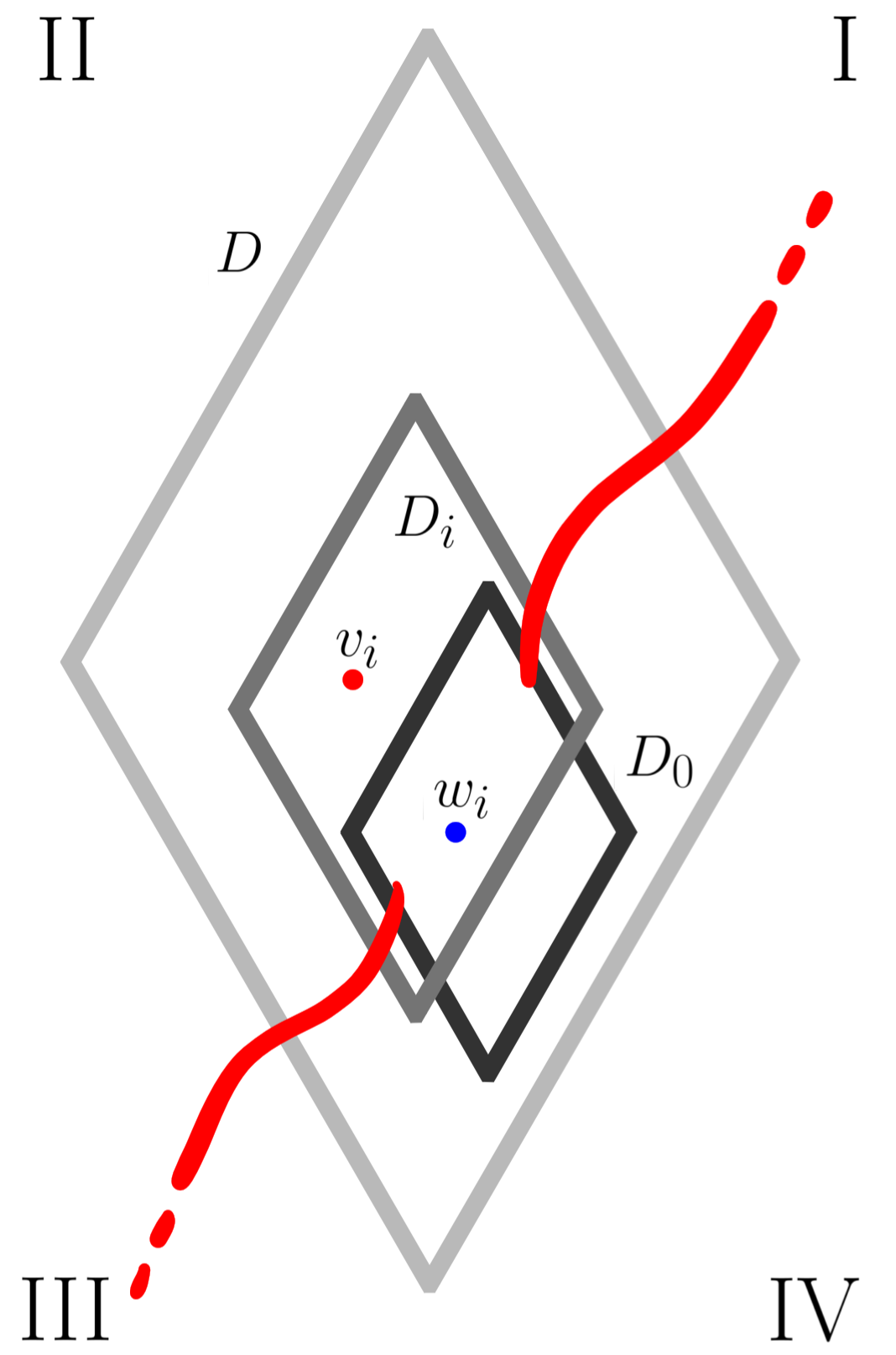}
\hfill
\captionsetup{style=leftside,font=footnotesize}
\caption{Blue plays in $w_i$}
\label{Figure.Hex-intrinsically-local-D}
\end{wrapfigure}
play by Red after Blue $w_i$ and Red $v_i$. We may assume $w_i,v_i\in D_i$. 

Let $D=\bigcup_{w_i\in D_0} D_i$, which is a finite union of finite sets, hence finite. We claim that this set shows that the original position is intrinsically local. If in the actual game, Blue plays outside $D_0$, then Red can win as we described by pretending that he had played on tile $w_0$ and responding only within $D_0$, which is part of $D$. If alternatively, Blue plays on some tile $w_i$ in $D_0$, then Red can respond on tile $v_i$ and play afterward only in $D_i$, which is also part of $D$, as illustrated in Figure~\ref{Figure.Hex-intrinsically-local-D}. So $D$ witnesses that the position was an intrinsically local win for Red. 
\hfill\fbox{}\medskip\goodbreak 

As an application of Theorem~\ref{Theorem.Hex-intrinsically-local}, let us now provide the promised proof of Theorem~\ref{Theorem.Truncated-quadrants-iff-trapezoids}, asserting that Blue can win all truncated quadrants if and only if he can win arbitrarily large finite trapezoids.

\smallskip\noindent\emph{Proof of Theorem~\ref{Theorem.Truncated-quadrants-iff-trapezoids}.} 
We explained in the proof of Theorem~\ref{Theorem.If-trapezoids-then-draw} that by ``arbitrarily large'' trapezoids, we mean that the short Red side can be made as long as possible, with the other sides being chosen large enough so that Blue has a first-player winning strategy. And it is clear as we mentioned earlier that if Blue can win on arbitrarily large trapezoids in this sense, then Blue can win on all truncated quadrants, simply by placing a trapezoid within the truncated quadrant and playing only within it.

\begin{wrapfigure}{r}{.4\textwidth}
\vskip-1ex\hfill
\includegraphics[width=.35\textwidth]{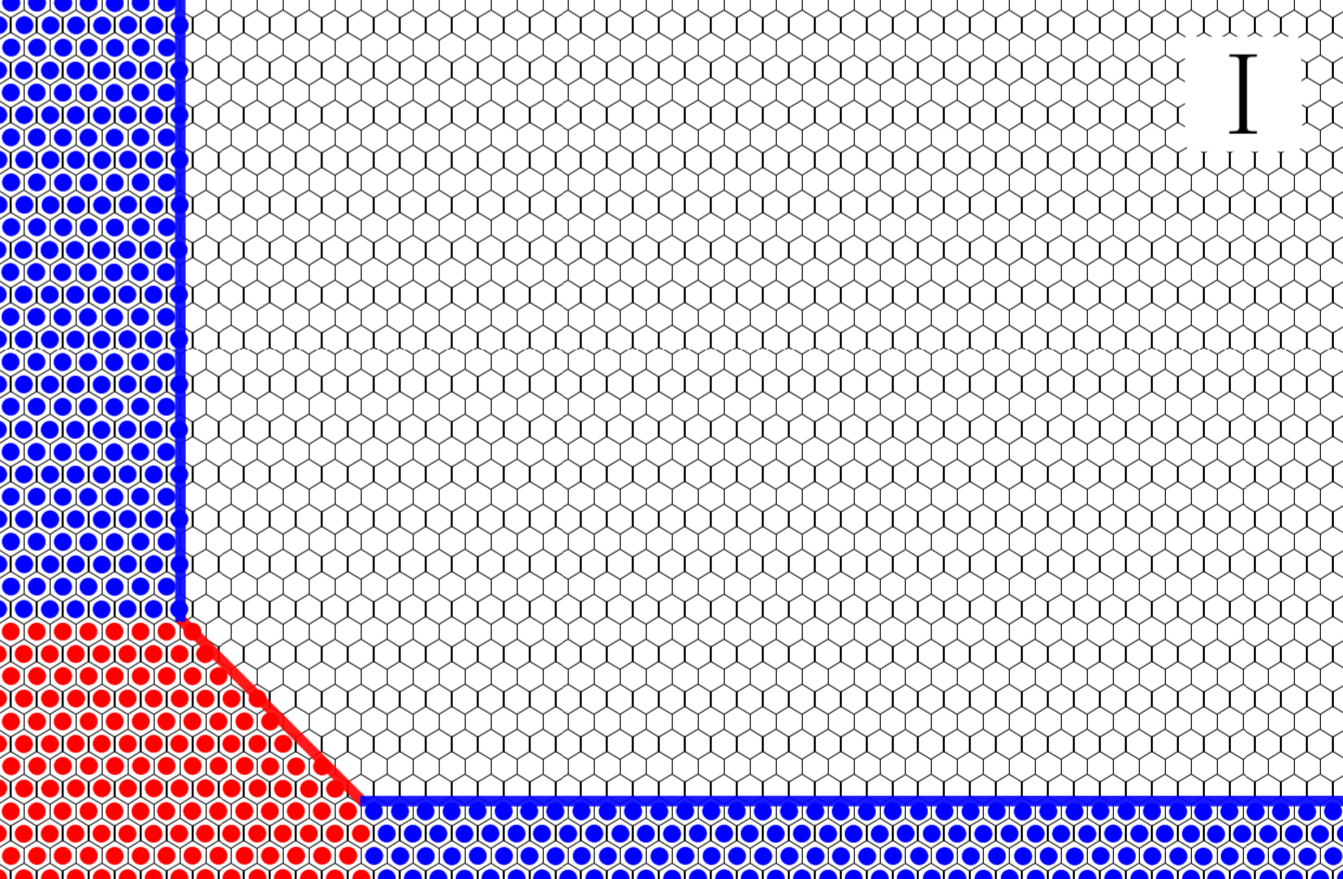}
\captionsetup{style=rightside,font=footnotesize}
\caption{Truncated quadrant as infinite Hex position}
\label{Figure.Infinite-trapezoid-filled}
\end{wrapfigure}
It remains to prove the other direction, that if Blue can win on all truncated quadrants, then Blue can win on arbitrarily large trapezoids. Suppose that Blue has a first-player winning strategy on a given truncated quadrant. Let us interpret this truncated quadrant game as an equivalent instance of infinite Hex, simply by filling in the rest of the infinite Hex board with stones as indicated in Figure~\ref{Figure.Infinite-trapezoid-filled}. The point of this translation is that any win by Blue in the truncated quadrant game will be a win for Blue in the corresponding infinite Hex game and vice versa---so the games are equivalent. Since we have assumed that Blue can win the truncated quadrant game, which is an open game, this is therefore a position in infinite Hex that Blue can win in finitely many moves. Therefore, by Theorem~\ref{Theorem.Hex-intrinsically-local}, the game is intrinsically local, and so there is a finite subboard of this position such that Blue can win by playing on and responding to moves on only that finite subboard. This finite subboard will be included in a trapezoid contained within the truncated quadrant, with a short red side length at least as long as the truncated segment of the truncated quadrant. In short, if Blue wins on a truncated quadrant, then he wins on a large enough finite portion of it, which we can take without loss of generality to be a trapezoid. So if Blue can win on all truncated quadrants, then Blue can win on arbitrarily large trapezoids, as desired.
\hfill\fbox{}\medskip\goodbreak 

Notice that the non-constructiveness of the proof of Theorem~\ref{Theorem.Hex-intrinsically-local} is mirrored by the fact that game values of infinite Hex positions are not in general computable, nor computably enumerable. Clearly there is no computable procedure to compute in finite time whether a given position (presented as an oracle) is intrinsically local, since any such computation would have inspected at most finitely much of the board, and no finite part of a board position is sufficient to determine whether it is intrinsically local.

\subsection{A generalization to simple stone-placing games}

We should like now to generalize the intrinsic locality phenomenon we have established for infinite Hex to a somewhat more general game-theoretic context, proving it for what we call the \emph{simple stone-placing} games, a class of games including infinite Hex and other games, with the key features that stone placements are irreversible---once the stone is placed it never moves again---and extra stone placements are never disadvantageous.

Specifically, a simple stone-placing game is one played on a (possibly infinite) game board upon which the players, taking turns, place their colored stones, with each player striving to create a winning configuration with their stones from a specified set of sufficient winning configurations for that player. Thus, a simple stone-placing game is specified by a set $X$ of possible board locations and the respective sets of sufficient winning configurations $\mathcal{R},\mathcal{B}\subseteq \mathcal{P}(X)$. Player Red is striving to place red stones at every location of some winning red set $R\in\mathcal{R}$ and player Blue is striving to occupy a winning blue set $B\in\mathcal{B}$. It is fine for a player to have extra stones of their color outside the winning configuration. In order that not both players win at a given position, we insist that there are no disjoint winning sets $R\in\mathcal{R}$ and $B\in\mathcal{B}$. The game might allow draws, if some partitions of the board $X=X_R\sqcup X_B$ into red and blue sets have the property that no subset of $X_R$ is in $\mathcal{R}$ and no subset of $X_B$ is in $\mathcal{B}$.\footnote{Finite simple stone-placing games can be seen as isotone set coloring games, as defined in \cite{vanRijswijck2006}, with the requirement of nondisjointness of winning conditions.} 
Notice that a simple stone-placing game is open for a player if the winning configurations for that player can be specified by a set of finite winning configurations, for then the player will win, if at all, after finitely many moves.

In standard play, a simple stone-placing game proceeds for $\omega$ many moves or until the board is full, although the framework allows for transfinite play, provided that the turn order is specified for limit ordinals.

\enlargethispage{20pt}
Infinite Hex is a simple stone-placing game, since the winning Red configurations are specified by the standard infinite Hex winning condition and similarly for Blue---Theorem~\ref{Theorem.One-player-wins}, which states that at most one player wins for any coloring of the infinite board, proves exactly that infinite Hex satisfies the nondisjointness of winning conditions required by the definition of simple stone-placing games. We mention two further examples of stone-placing games that generalize Hex. In the Shannon switching game, the players choose edges of a given graph with two distinguished nodes so that player Short aims to join such nodes, while player Cut strives to prevent that---Hex can be seen as a Shannon game on a particular graph. The game of Y is played on a triangular board as the one seen before in Figure~\ref{Figure.triangular-board}, but without the boundary coloring, so that each player aims to construct a Y-like chain connecting all three sides of the board---also in this case it is clear that the winning sets cannot be pairwise disjoint, and it is not hard to find a position of Y which is game-theoretically equivalent to the initial position of Hex.\goodbreak

Meanwhile, let us mention that several other games that are or can be played by placing colored stones on a board, such as Go, Gomoku, tic-tac-toe, Othello, are not simple stone-placing games according to our definition. In both Gomoku (five-in-a-row) and tic-tac-toe---which are known as \emph{positional} games, as both players aim to occupy exactly the same winning configurations---it is the \emph{first} player to create the winning pattern that wins the game, whether or not the other player could also have created such a winning configuration with further play, and therefore these games do not fulfill the nondisjointness requirement in our definition. Similarly, the games of Go and Othello are not simple stone-placing games, because in the game of Go, stones are sometimes removed from the board during play (when they are captured) and in Othello, stones can sometimes change color. The connection game of Twixt admits a natural infinitary variant, and although this seems not to be a simple stone-placing game, nevertheless it appears that a version of Theorem~\ref{Theorem.Hex-intrinsically-local} will go through to exhibit intrinsic locality for infinite Twixt.  

The main point we should like to make about simple stone-placing games is that both Theorems~\ref{Theorem.Hex-finite-values} and~\ref{Theorem.Hex-intrinsically-local} generalize to this context. 

\begin{theorem}\label{Theorem.Stone-placing-intrinsically-local}
If a position in a simple stone-placing game has a game value, then it is intrinsically local, and consequently, the game value must be finite.
\end{theorem}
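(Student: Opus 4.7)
The plan is to adapt the proof of Theorem~\ref{Theorem.Hex-intrinsically-local} essentially verbatim, leveraging the three defining features of a simple stone-placing game. First, stones are placed permanently and never removed, so any stone contributing to a winning configuration at some stage continues to do so. Second, the winning condition is purely existential in a player's own stones: Red wins as long as the red stones contain some $R\in\mathcal{R}$, so placing further red stones is never disadvantageous. Third, the nondisjointness requirement guarantees that at most one player can simultaneously hold a winning configuration, since red and blue stones occupy disjoint tiles and any pair of winning $R$ and $B$ for the two players would violate nondisjointness.

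I would proceed by transfinite induction on the game value $\alpha$ of a Red-valued position, mirroring the three cases from the infinite Hex argument. When $\alpha=0$, Red already holds a winning configuration, Blue cannot attain one by nondisjointness, and the empty subboard witnesses intrinsic locality. When it is Red's turn and $\alpha>0$, the value-reducing strategy prescribes a specific move $v$ to a position of strictly smaller value, which by induction is intrinsically local on some finite $D'$; then $D=D'\cup\{v\}$ witnesses intrinsic locality for the original position.

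The crux is the case where it is Blue's turn and $\alpha>0$. Fix an arbitrary empty tile $w_0$ and imagine temporarily that Blue plays there; by the value-reducing strategy applied afterward Red has a response $v_0$ leading to a position of value strictly less than $\alpha$, which by induction is intrinsically local on some finite subboard $D_0\ni w_0,v_0$. For each empty tile $w_i\in D_0$ of the original position, the analogous analysis yields a Red response $v_i$ and a finite intrinsically-local subboard $D_i\ni w_i,v_i$. Setting $D=\bigcup_{w_i\in D_0} D_i$ gives a finite union of finite sets. Red's strategy on $D$ is: if Blue plays at some $w_i\in D_0$, reply with $v_i$ and continue intrinsically-locally within $D_i\subseteq D$; if Blue plays outside $D_0$, pretend Blue played at $w_0$, reply with $v_0$, and continue with the $D_0$-strategy, inventing imaginary Blue moves inside $D_0$ whenever Blue's real moves fall outside.

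The principal obstacle is verifying that this ``pretend and ignore'' trick really yields a Red win in the real game; here the three features combine. Red's actual stones coincide exactly with the stones Red would have placed in the imagined game, since Red plays only inside $D_0$. Hence if the imagined play produces a winning $R$ made up of the initial red stones together with Red's plays in $D_0$, the same $R$ is a subset of Red's real stones and is a real winning configuration. Nondisjointness then rules out Blue simultaneously attaining a winning $B$, so Blue's extra stones outside $D_0$ cannot spoil the win. This closes the induction and establishes intrinsic locality; the finite bound on the game value follows immediately, since restricting play to the finite subboard $D$ makes the remaining game finite and bounds the game value by roughly $|D|$.
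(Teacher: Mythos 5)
Your proposal is correct and follows essentially the same route as the paper, whose proof simply observes that the induction of Theorem~\ref{Theorem.Hex-intrinsically-local} carries over mutatis mutandis, with the nondisjointness of winning configurations playing exactly the role you identify---ruling out any Blue threat disjoint from $D$ once Red's local play succeeds. Your explicit verification of the ``pretend and ignore'' step via permanence of stones, monotonicity, and nondisjointness spells out precisely what the paper's compressed argument appeals to.
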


\begin{proof}
In the proof of Theorem~\ref{Theorem.Hex-intrinsically-local} we refer to the winning condition of Red only in terms of the finite subboard realizing Red's intrinsically local play. In that proof, the ability of Red to win by playing only within the subboard $D$, even when Blue initially plays outside of $D_0$, relies on the impossibility for Blue to make threats disjoint from $D$, because if Red succeeds there Blue has lost---this property is present also in simple stone-placing games thanks to the nondisjointness of winning conditions. Thus, we can replicate the previous argument mutatis mutandis to prove this theorem. The second part of this theorem follows just as Theorem~\ref{Theorem.Hex-finite-values} was a consequence of Theorem~\ref{Theorem.Hex-intrinsically-local}.
\end{proof}

\section{A multiple-stone playing variation of infinite Hex}\label{Section.Multiple-stone-play}

Timothy Gowers~\cite{Gowers2021.Twitter:Multiple-stone-Hex}  suggests a variation of infinite Hex in which one player---let's say Red---is allowed to place two stones on each turn, whilst Blue places only one. This version of the game has a hint of the Angel and Devil game. 

\begin{question}[Gowers]\label{Question.Multiple-stones}
If Red can place two stones on each turn, whilst Blue places only one, is infinite Hex still a draw? 
\end{question}

The answer to this question is strongly negative, as shown later in Theorem~\ref{Theorem.2-for-1-win}, and we proceed towards that result in stages.\goodbreak

Joshua Zelinsky observed that seven-for-one play is a clear win for Red, since every Blue play can be immediately surrounded and isolated by Red, using at most six stones, with one stone left over for making progress on a winning path. Harrison Brown~\cite{Brown2022.Twitter:Multiple-stone-Hex} improved this to the case of three-for-one as follows:

\begin{theorem}[Brown]\label{Theorem.3-for-1}
In the infinite Hex variation in which Red places three stones on each turn, while Blue places only one, Red has a winning strategy.
\end{theorem}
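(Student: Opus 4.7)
The plan is to give Red an explicit winning strategy based on the classical Hex notion of a \emph{bridge}: two same-color stones placed so that exactly two empty hexagons are simultaneously adjacent to both of them. If the opponent occupies one of these two ``link'' cells, the player can on his next turn occupy the paired link cell, thereby committing the two stones to a single connected chain. Red's goal will be to construct a bi-infinite ladder of such bridges extending from the origin toward $(\infty,\infty)$ on one end and toward $(-\infty,-\infty)$ on the other, along one of the three natural diagonal axes of the hexagonal lattice.

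On his opening move, Red places three stones forming the seed of the ladder: a central stone together with one bridge-neighbor in each of the two chosen diagonal directions. On every subsequent turn, Red will proceed as follows. First, if Blue's most recent stone lies in a link cell of some currently unsealed bridge of the red chain, Red uses one of his three stones to occupy the paired link cell, sealing that bridge into a solid chain of four adjacent red stones. Second, with his remaining (at least two) stones, Red extends each end of the ladder by one additional bridge-step in the chosen direction, placing one new red stone on each side.

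Since Blue places only one stone per turn, at most one defensive stone is required per turn, leaving Red with at least two stones free for extension---one for each of the two diagonal directions. By induction on the turn number, after $n$ turns the red chain is a connected sequence of stones reaching at least $n$ bridge-steps past the origin in each of the two diagonal directions, with no lost connections. Each bridge advances the chain by a fixed positive diagonal displacement, so after $\omega$ many moves the resulting connected chain is a bi-infinite $\mathbb{Z}$-chain of adjacent red stones whose positive end tends to $(\infty,\infty)$ and whose negative end tends to $(-\infty,-\infty)$. Such a chain fulfills the standard winning condition, and so Red wins.

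The main obstacle I expect is handling Blue moves that do not threaten any existing bridge but instead preemptively occupy a cell where Red was planning to place his next extension. To address this, Red should not commit to a single rigid diagonal line: at each extension step he has several candidate bridge-neighbors lying in the target diagonal cone, and since Blue places only one stone per turn while Red needs only one available cell per direction, Red can always route around such preemptive blocks. Spelling this flexibility out explicitly---together with checking that consecutive bridges along a chosen diagonal never share link cells, and that the resulting slightly zigzagged chain still meets the geometric demands of the standard winning condition (ultimately crossing every horizontal and vertical line in the appropriate direction)---constitutes the bulk of the remaining bookkeeping.
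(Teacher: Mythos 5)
Your per-turn accounting is locally sound: if consecutive bridges never share carrier cells, a single Blue stone threatens at most one bridge, so one stone suffices to seal and two remain to extend the two ends. The genuine gap is the issue you flag at the end and dismiss as ``bookkeeping'': Blue stones that threaten no existing bridge and instead land ahead of the ladder are \emph{never answered} by your strategy, so they accumulate without bound along any intended route. Whether Red can ``always route around such preemptive blocks'' is not a local question about having several candidate bridge-neighbors at each step---it is a global pursuit/fencing question with the flavor of the Angel--Devil game (which the paper itself notes in this context). Indeed, if Red's rerouting is confined to a narrow cone about the target diagonal, Blue can simply win: Red's positive end advances about two cells per turn, so reaching distance $d$ takes about $d/2$ turns, while a connected blue wall spanning a cone of half-angle $\theta$ at distance $d$ needs only on the order of $d\tan\theta$ stones; for small $\theta$ Blue completes the wall before Red arrives. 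Blue may also lay down sparse segments far ahead and densify them as Red approaches. So the burden of your proof is precisely the part you have not supplied: an escape argument showing that bounded-per-step flexibility defeats every such fencing strategy.

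The paper's strategy (Brown's) is structured so that this problem never arises. Red fixes a single doubly-infinite diagonal once and for all and plays only on the tiles adjacent to it, a channel two tiles wide. Every tile adjacent to the diagonal is pre-assigned the two tiles adjacent to it on the opposite side; the moment Blue plays any stone adjacent to the diagonal, Red answers with two stones on the assigned tiles, building a protected detour around that blue stone, and spends the third stone filling the unfilled channel tile closest to the origin. The point is that for Blue to obstruct Red, Blue would need two adjacent stones crossing the diagonal, and Red's instantaneous two-stone reply makes this configuration permanently unachievable---Blue stones placed far ahead are provably irrelevant until they are adjacent to the channel, at which moment they are answered. Thus no unanswered threats ever accumulate, no rerouting is needed, and the winning chain is completed outward from the origin by the steady one-stone-per-turn progress. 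To repair your proof you would either need to prove the escape/evasion claim outright, or restructure the strategy in the paper's reactive fixed-channel style so that every relevant Blue stone is neutralized the turn it becomes relevant.
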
\goodbreak

\begin{wrapfigure}[9]{r}{.34\textwidth}
\vskip-2ex\hfill
\includegraphics[width=.3\textwidth]{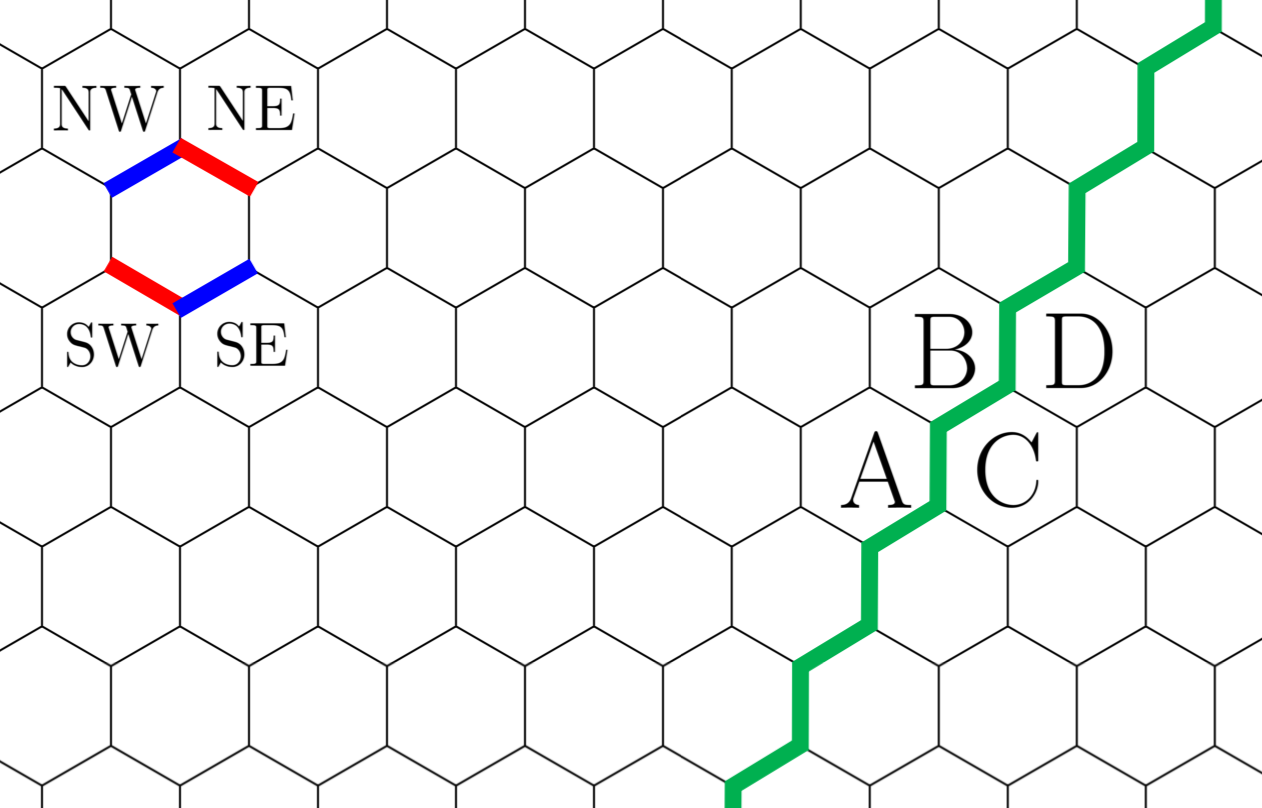}
\captionsetup{style=rightside,font=footnotesize}
\caption{Red's winning 3-for-1 strategy}
\label{Figure.channel-strategy}
\end{wrapfigure}
\noindent\emph{Proof.} 
At the outset of the game, let Red fix a particular diagonal stretching from infinity at lower left to infinity at upper right, as indicated in green in Figure~\ref{Figure.channel-strategy}, as well as a particular tile to be considered as origin. Red will aim to win by playing on tiles adjacent to this diagonal only. We describe Red's strategy by assigning to each tile adjacent to the diagonal the two tiles adjacent to it on the opposite side of the diagonal---with the notation of Figure~\ref{Figure.channel-strategy}, to tile $B$ we assign the tiles $C$ and $D$, while to tile $C$ we assign the tiles $A$ and $B$. 

\begin{wrapfigure}[11]{l}{.38\textwidth}
\vskip-1ex
\includegraphics[width=.34\textwidth]{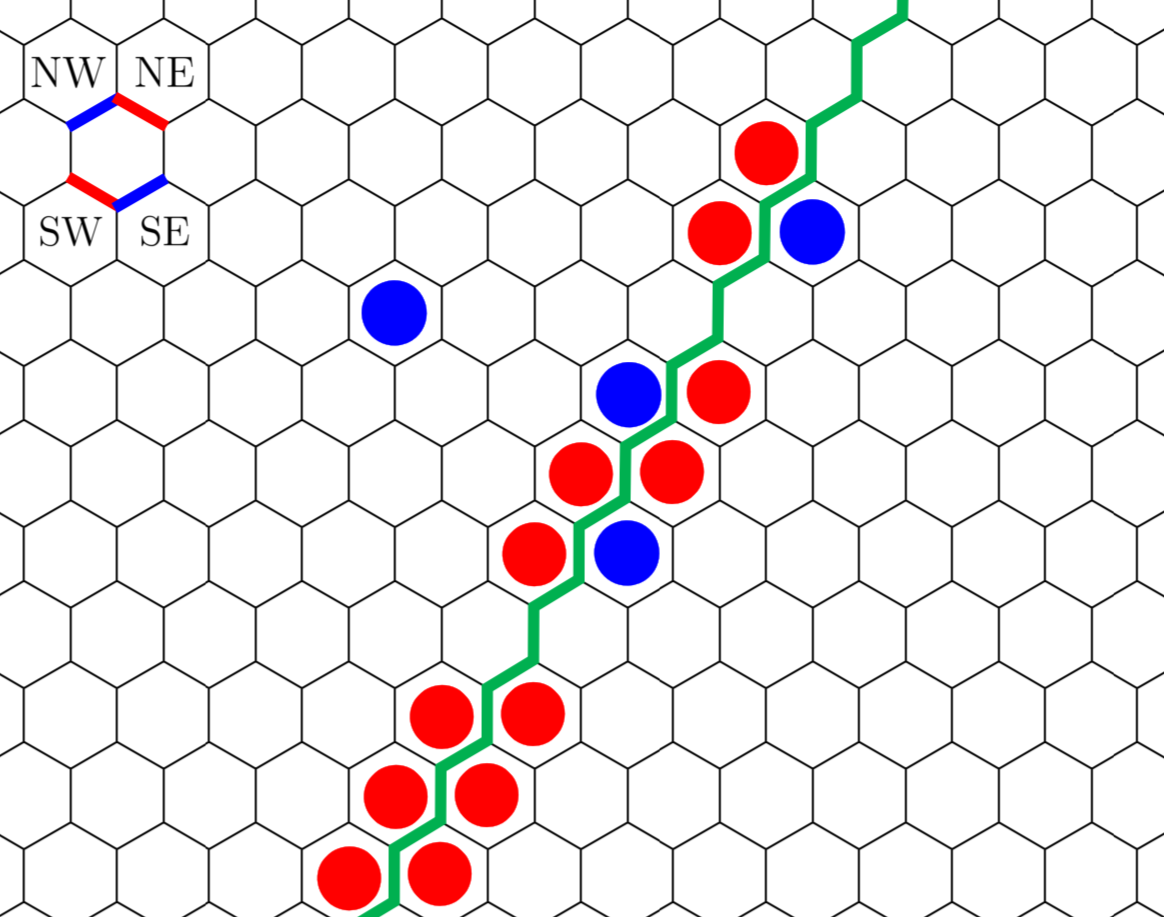}\hfill
\captionsetup{style=rightside,font=footnotesize}
\caption{Partial 3-for-1 play}
\label{Figure.3-for-1}
\end{wrapfigure}
\noindent If Blue should ever place a stone on a tile adjacent to the green diagonal, then Red can reply with two stones on the tiles assigned to such tile, building a protected detour around the blue stone. With the third stone, Red will play at the unfilled tile adjacent to the diagonal which is closest to the origin, as illustrated in Figure~\ref{Figure.3-for-1}. If Red finds that one or both of the tiles assigned to a move are already filled, then he can simply play the extra stones elsewhere adjacent to the diagonal.
In this way, Blue will not be able to cross the diagonal with two adjacent stones, and so the blue stones can never form a true obstacle across the diagonal. So Red will eventually complete a winning chain along the diagonal.
\hfill\fbox{}\medskip\goodbreak 

\begin{wrapfigure}{r}{.36\textwidth}
\vskip-1ex\hfill
\includegraphics[width=.32\textwidth]{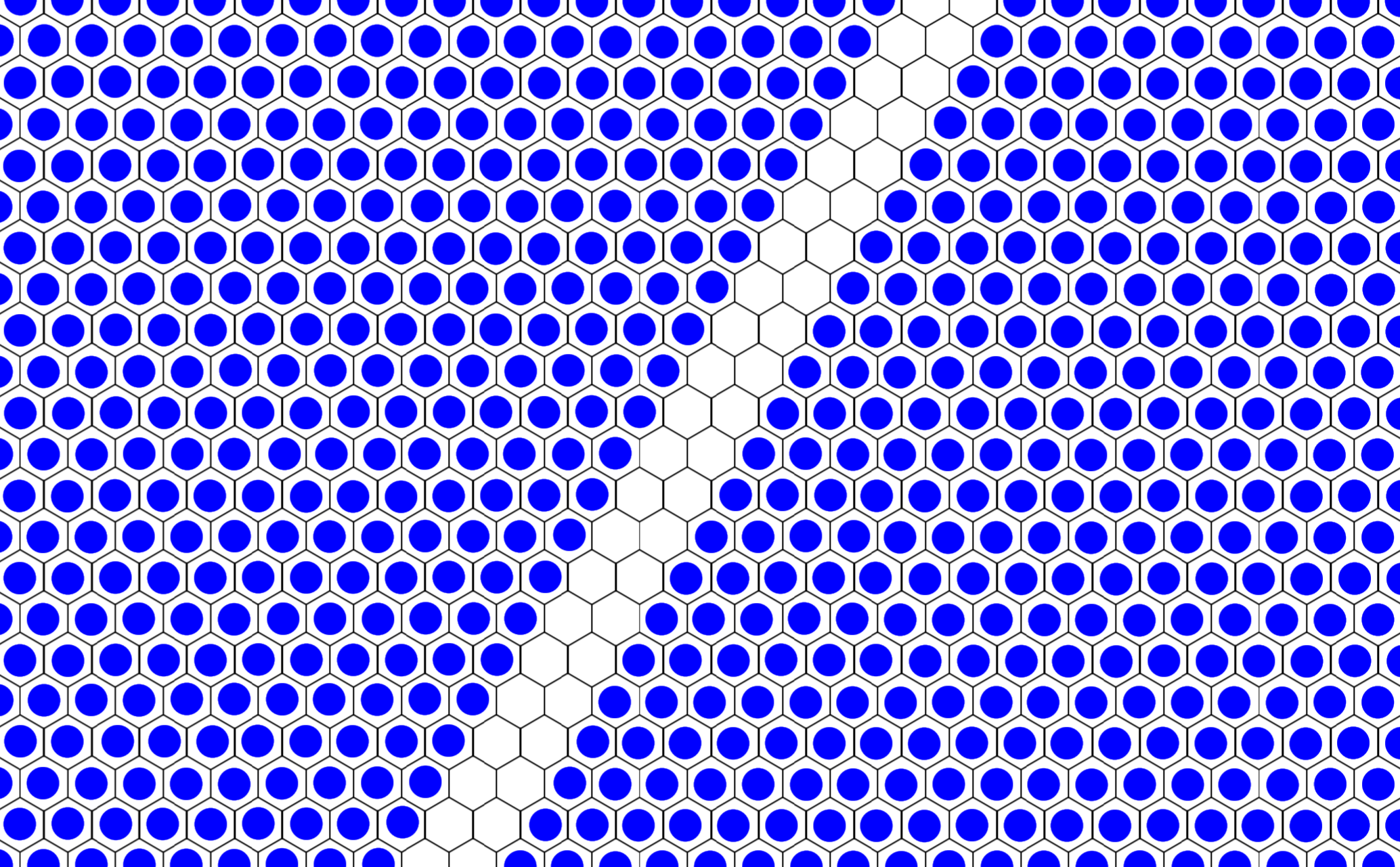}
\captionsetup{style=leftside,font=footnotesize}
\caption{Red wins 3-for-1 play}
\label{Figure.3-for-1-channel-biased}
\end{wrapfigure}
The argument succeeds in the case that normal play is two-for-one, as long as infinitely often Red is allowed to play three stones. This would work even if Blue were the one to decide when Red was allowed to place three stones, provided that he in fact allowed this infinitely often. The argument even accommodates infinitely many blue stones in the initial configuration, as in Figure~\ref{Figure.3-for-1-channel-biased}.

Notice that Red can adapt the strategy described above to win the two-for-one variant of the game on finite boards such as the trapezoids considered in Theorem~\ref{Theorem.If-trapezoids-then-draw}---when Red only needs to build a finite winning chain, there is no need for a third stone at each turn.

We will now refine the argument above to prove the two-for-one case, building on an idea kindly proposed by the anonymous referee.\goodbreak

\newpage
\begin{wrapfigure}[14]{r}{.65\textwidth}\vskip-0ex
\hfill
\includegraphics[width=.6\textwidth]{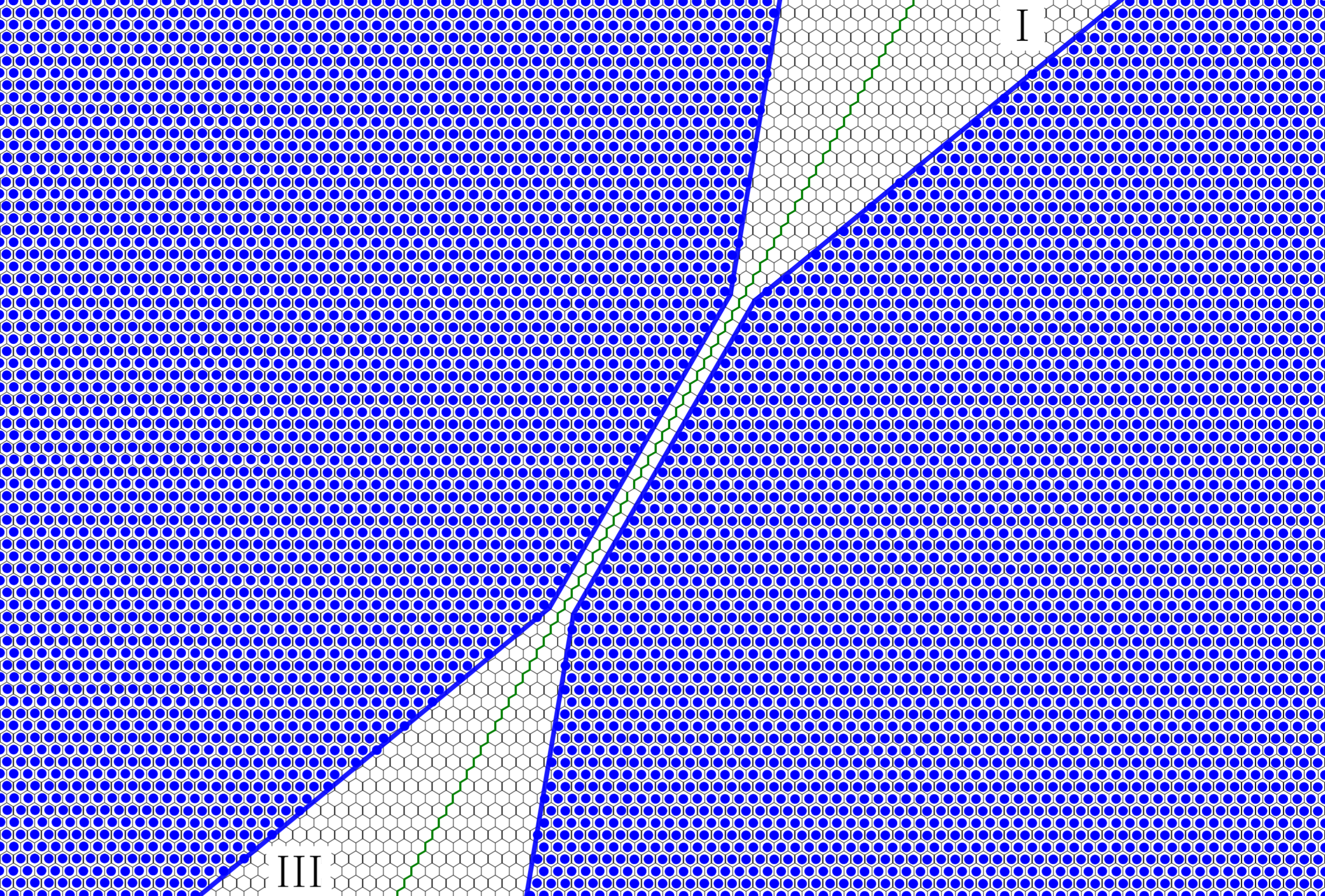}
\captionsetup{style=rightside,font=footnotesize}
\caption{Biased initial position}
\label{Figure.2-for-1-cone}
\end{wrapfigure}
In particular, we will describe a strategy that allows Red to construct a winning $\mathbb{Z}$-chain in exactly $\omega$ turns when starting from a board heavily biased in favor of Blue, as the one shown in Figure~\ref{Figure.2-for-1-cone}---it will follow that Red can win two-for-one infinite Hex starting from the empty board, even without playing first.

~

\begin{theorem}\label{Theorem.2-for-1-win}
In the infinite Hex variation in which Red places two
stones on each turn, while Blue places only one, Red has a winning strategy. Indeed, Red can win even when most of the board is given over to Blue as shown in Figure~\ref{Figure.2-for-1-cone}, where the narrow center channel can be as long as desired and the angle of the two cones on each end can be as tight as desired. Furthermore, Red can win on such boards with two-for-one play, but infinitely often electing to play only one-for-one.
\end{theorem}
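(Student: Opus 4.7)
The plan is to refine the channel strategy of Theorem~\ref{Theorem.3-for-1} by invoking the standard Hex \emph{bridge} construction, so that every Red turn of 2-for-1 play can both defend a threatened bridge and make one step of progress. Because the third clause of the theorem (biased position plus occasional 1-for-1 turns) is the strongest, I would prove that case.

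First I would fix a hex-aligned diagonal axis running from $(-\infty,-\infty)$ through the lower cone, across the finite central channel, and through the upper cone to $(+\infty,+\infty)$. Along this axis, mark a bi-infinite sequence of target tiles $\{t_i\}_{i\in\mathbb{Z}}$ so that each consecutive pair $t_i,t_{i+1}$ is joined by a standard Hex bridge $\{b_i^1,b_i^2\}$---two empty cells each adjacent to both endpoints, giving a virtual connection in the usual Hex sense as long as the bridge cells remain empty. Since the cones widen linearly while a bridge needs only a constant-sized neighborhood, from some bounded distance beyond each apex every target and every bridge lies inside the empty bowtie; the finite opening through the narrow channel and near the two apices is set up by hand in finitely many ad-hoc moves.

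Red's strategy then runs as follows. On each turn: (i) if Blue's most recent stone lies on a bridge cell whose bridge has both endpoints already Red, Red uses one stone on the companion cell to complete that bridge; (ii) any remaining stone is placed on the next unoccupied target, alternating fairly between the two cones. Because Blue contributes only one stone per turn, at most one active bridge is ever threatened when Red moves, so (i) costs at most one of Red's stones, and on every 2-for-1 turn Red advances the chain by at least one target. The 1-for-1 turns in the mixed variant can temporarily stall progress when defense is required, but 2-for-1 turns occur infinitely often and every invasion is answered in the very same turn, so no active bridge is ever broken. After $\omega$ moves every target has been placed and every invaded bridge completed, producing a connected red $\mathbb{Z}$-chain satisfying the standard winning condition.

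The main obstacle is Blue's ability to attack bridges \emph{preemptively}, before both endpoints are Red: when Red later activates such a half-broken bridge by placing its second endpoint, Red must immediately play the remaining bridge cell, consuming the stone that would otherwise have extended the chain. I would handle this by choosing the target sequence adaptively---whenever the next planned bridge is already compromised, Red shifts the upcoming target sideways to a nearby parallel track of the axis, of which the widening cone always supplies infinitely many untouched alternatives once one is far enough out. A careful bookkeeping shows that each of Blue's preemptive stones can force at most one such local shift, so the total defensive overhead remains bounded by Blue's move count and the average rate of one new target per 2-for-1 turn is preserved, completing the proof.
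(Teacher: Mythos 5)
Your proposal contains a genuine gap: defending only \emph{active} bridges (those with both endpoints already red) leaves the entire not-yet-built portion of the path undefended, and your sideways-shift device cannot repair this. Concretely, Blue can ignore your bridges altogether and build a solid wall spanning a full cross-section of one cone at a large distance $d$ ahead of your construction. The cone at distance $d$ has finite width, roughly $\varepsilon d$ where $\varepsilon$ reflects the cone angle, so the wall needs only about $\varepsilon d$ stones; meanwhile your chain tip advances a bounded number of targets per turn, reaching distance $d$ only after on the order of $d$ turns. Since the theorem requires Red to win for cones ``as tight as desired,'' Blue can fix a small $\varepsilon$ and a large $d$ and complete the blockade strictly before your chain arrives. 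At that point shifting to ``a nearby parallel track'' is useless: every parallel track passes through the same finite cross-section, which is now entirely blue, so the infinitely many untouched alternatives all lie \emph{beyond} a barrier you can no longer cross. Your bookkeeping claim that ``each of Blue's preemptive stones can force at most one local shift'' is exactly where this is missed---it treats Blue's stones as isolated nuisances, whereas they cooperate globally to form a spanning obstacle. A secondary instance of the same problem is your treatment of the channel: setting it up ``by hand in finitely many ad-hoc moves'' fails because the channel can be as long as desired, Blue moves throughout, and two adjacent blue stones seal the narrow divide unless every blue stone placed there draws an \emph{immediate} adjacent response.

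The paper's proof avoids this by inverting your defensive posture: Red commits to a target diagonal at the outset and responds two-for-one, directly across the path, to \emph{every} Blue stone placed adjacent to it---anywhere along its infinite length, built or unbuilt---so Blue can never place two contiguous stones crossing the target path and no wall can ever sever it. The budget is then rescued not by bridges but by the path-bending device: when Blue attacks at an extreme point, farther out than all previous play, Red answers with only \emph{one} stone and bends the target path around it, freeing the second stone for progress; and since after any finite stage only finitely many locations still demand a full two-stone response, Red gets infinitely many free progress moves (this same observation yields the final one-for-one-infinitely-often clause). If you want to salvage a bridge-based argument, you would need some mechanism forcing Blue to spend stones near Red's path faster than Blue can accumulate a spanning barrier ahead of it, and as it stands your proposal has no such mechanism.
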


\enlargethispage{20pt}
\medskip\noindent\emph{Proof.} 
Red's winning strategy in this two-for-one variation of the game will be a simple modification of the strategy used in the three-for-one case of Theorem~\ref{Theorem.3-for-1}. Namely, as before Red will pick an initial goal diagonal as indicated in green in Figure~\ref{Figure.2-for-1-cone}, aiming to construct a winning path along it, with the difference here being that in order to get away with only two moves each round instead of three, Red will be willing at times to allow small deviations in the goal path. These 
deviations, which will occur only infrequently and be spaced far apart, will save a move when they occur and thereby enable Red to make progress on building the winning path.\goodbreak

\begin{wrapfigure}{r}{.45\textwidth}
\hfill
\includegraphics[width=.42\textwidth]{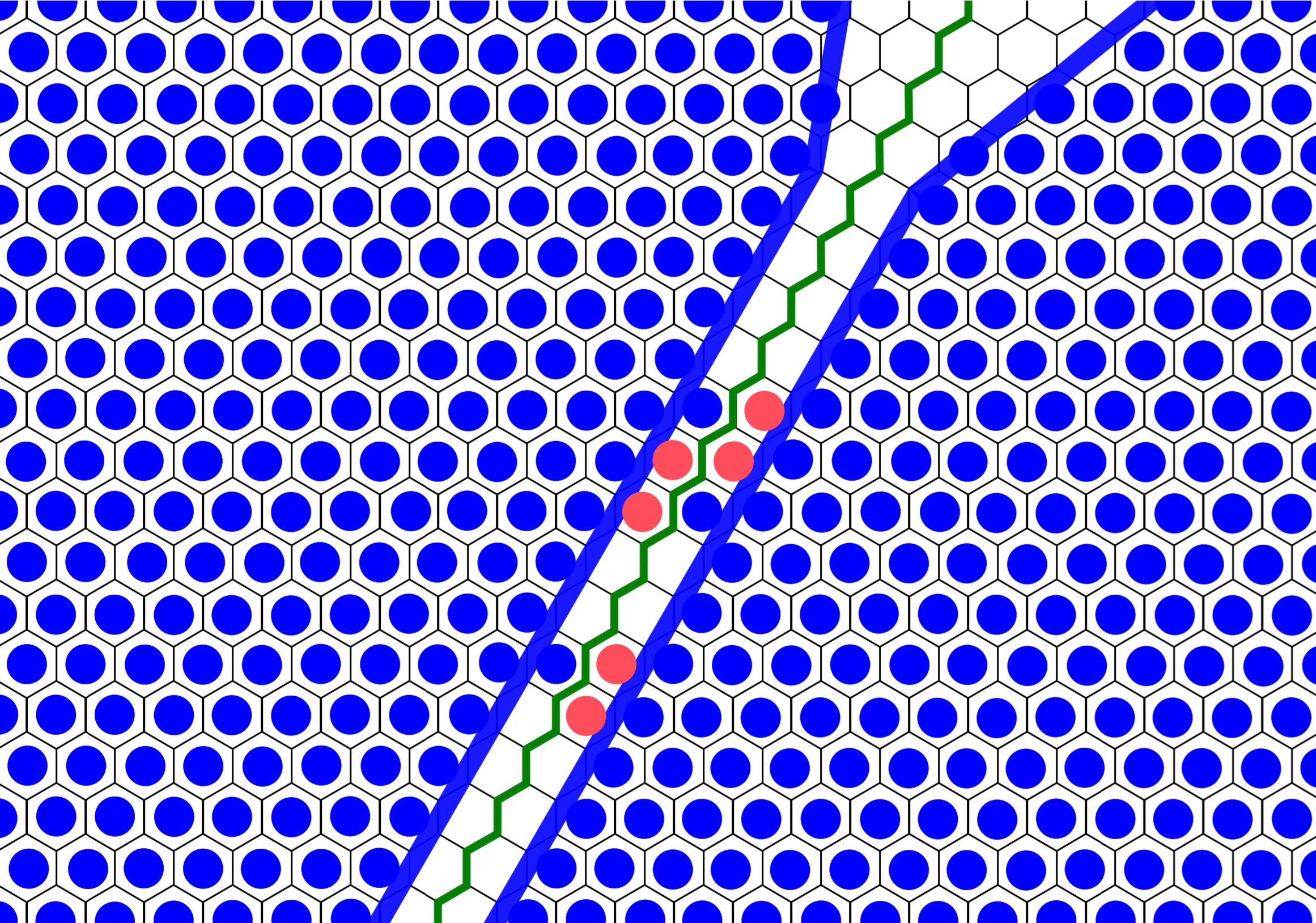}
\captionsetup{style=rightside,font=footnotesize}
\caption{Play in the center channel}
\label{Figure.2-for-1-cone-play-in-channel}
\end{wrapfigure}
Red aims of course to build a connected $\mathbb{Z}$-chain of Red stones following the green diagonal path, thereby constructing a winning play. The main part of the strategy calls for Red as before to respond to attacks on the target diagonal path with two-for-one adjacent Red stones directly across the path. For example, whenever Blue plays in the center channel region, Red will respond with adjacent two-for-one play as shown in Figure~\ref{Figure.2-for-1-cone-play-in-channel}. This manner of play in the channel will prevent Blue from ever bridging that narrow divide. 

\noindent Similarly, during normal play Red will also respond to attacks on the target diagonal out in the open conical region with adjacent two-for-one play, as shown in Figure~\ref{Figure.2-for-1-cone-reply-with-2}. This manner of play will prevent Blue from ever cutting across the target diagonal. 

Furthermore, if Blue should ever happen to place a stone not directly adjacent to the current target diagonal, then Red will breathe a sigh of relief, taking the opportunity to ignore the move and instead place Red stones so as to make progress on the desired winning diagonal path, completing it outward from the center. Such moves by Blue give Red a free opportunity for progress.

\begin{wrapfigure}{l}{.45\textwidth}
\includegraphics[width=.42\textwidth]{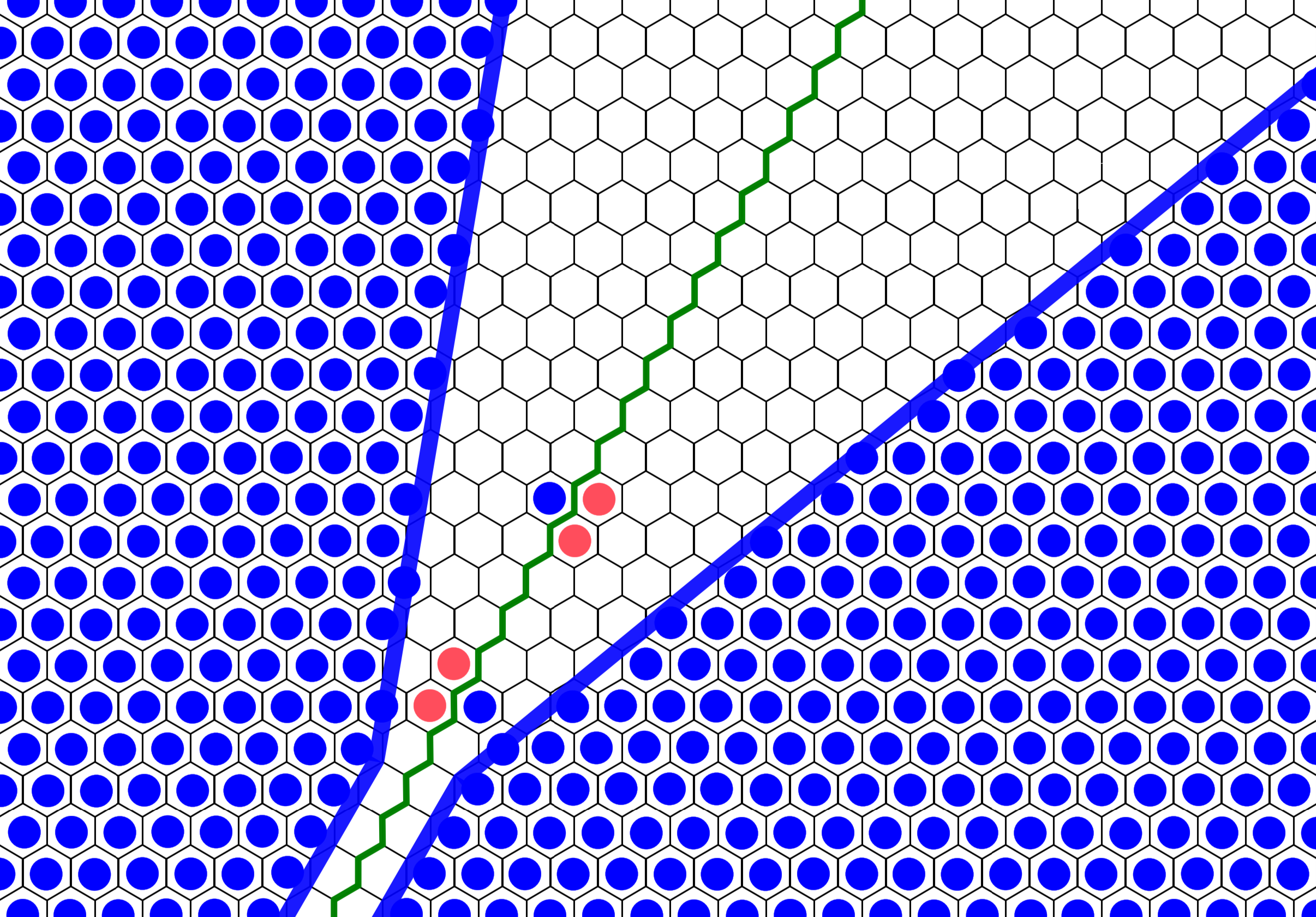}
\hfill
\captionsetup{style=leftside,font=footnotesize}
\caption{Red replies with two stones}
\label{Figure.2-for-1-cone-reply-with-2}
\end{wrapfigure}
But of course, Red cannot count on such Blue moves occurring, and yet still Red wants to ensure that the winning path will be completed by time $\omega$. It is for this reason that Red adopts the path-bending revision strategy we shall now describe. Namely, whenever Blue attacks the current target path at an extreme point far out enough in the open conical region, meaning that the blue stone is placed adjacent to the target path out in the open conical region, further out along it than any stone previously placed and far enough that the revised path will remain within the conical region, then Red will opt to respond with only one stone immediately adjacent to it, while also bending the target path around that stone as indicated in Figure~\ref{Figure.2-for-1-cone-reply-with-twist}. With the other stone, meanwhile, Red will make progress on the winning path by completing it outward from the center---this is exactly the advantage that Red needs to win by time $\omega$. Notice that the center part of the target path is progressively stabilized with each subsequent bending, since the bends occur only increasingly far out on the path.

\begin{wrapfigure}{r}{.45\textwidth}
\vskip-2ex
\hfill
\includegraphics[width=.42\textwidth]{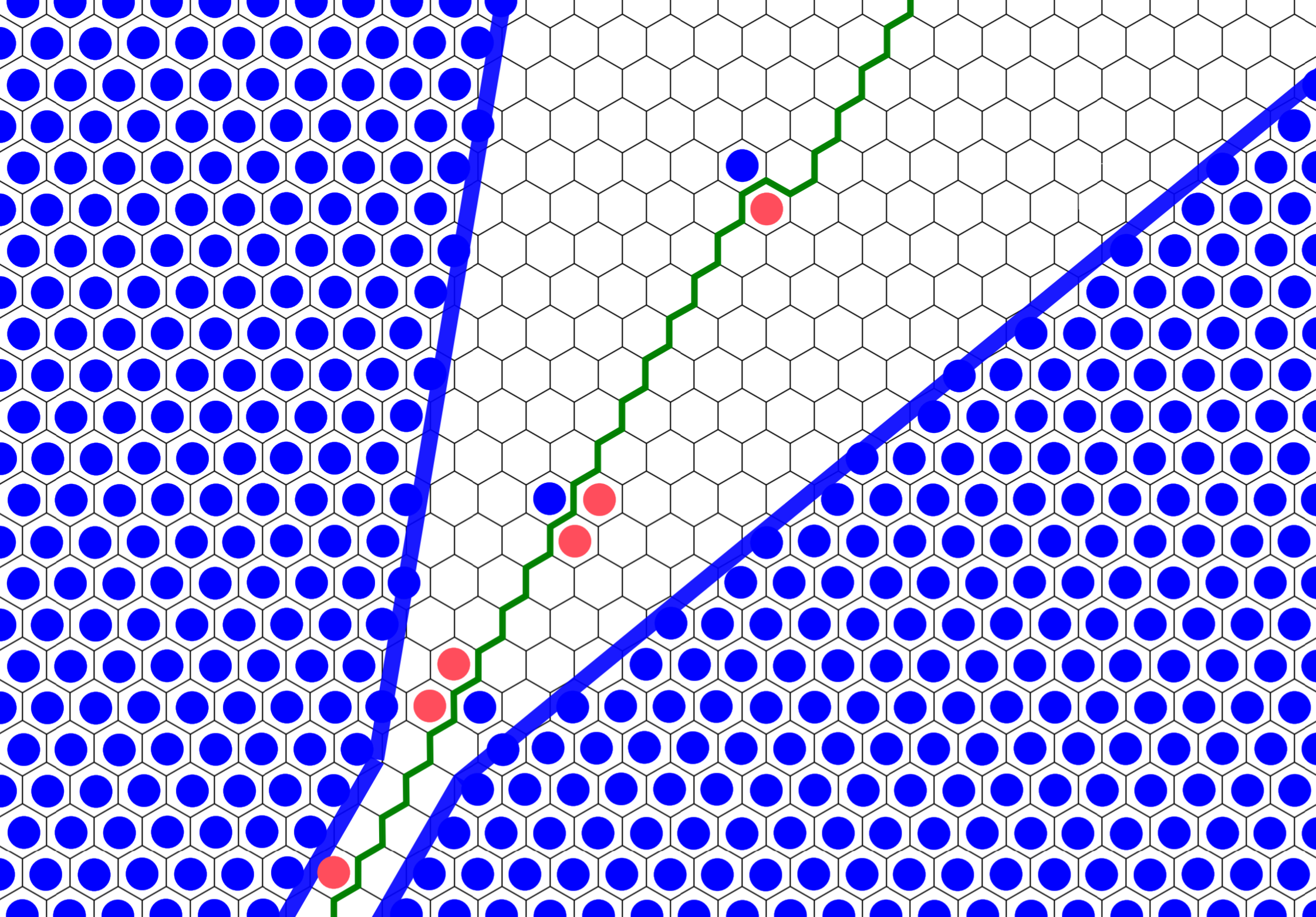}
\captionsetup{style=leftside,font=footnotesize}
\caption{Path-bending response, saving a Red stone}
\label{Figure.2-for-1-cone-reply-with-twist}
\end{wrapfigure}
Let us argue now that this is a winning strategy for Red, who will complete the winning $\mathbb{Z}$-chain by time $\omega$. We have already noted that Blue will be unable ever to bridge the narrow channel gap or to cut across the target diagonal path. What remains is for us to show that in any infinite play, Red will have infinitely many opportunities to make progress on constructing the winning path from the center outward. The reason for this is that after any finite play, there are only finitely many Blue moves available that will require immediate two-for-one response at that location. These are the blue moves attacking the current target path, but not at an extreme point that would invoke the path-bending response. Once all those locations are filled, any Blue move will either not attack the target path at all or will attack it at an extreme location that allows for the path-bending response. In both of these situations, Red is able to make free progress on the winning path, and so there will be infinitely many opportunities to do so. We conclude that Red will eventually complete a winning $\mathbb{Z}$-chain in a standard play of order type $\omega$.

Notice that the argument would work equally well if Red opted to play only one-for-one with every other path-bending move  and similarly opt for a one-move response whenever Blue makes a nonattacking off-path  move. Since such moves must occur infinitely often in any infinite play, this shows that infinitely often Red can respond with only one stone, proving the final claim of the theorem. 
\hfill\fbox{}\medskip\goodbreak 


Thus, the answer to Question~\ref{Question.Multiple-stones} is negative. The argument does not seem to succeed in the variation of infinite Hex in which Red plays two stones only on infinitely many turns chosen by Blue, or even on turns predetermined by adding some rule, starting from an empty board. The strategy described above requires Red to place two stones when Blue makes certain critical moves, such as those in the central channel or close enough to the center on the target green path.

In general, we can consider $m$-for-$n$ variations of infinite Hex in which Red and Blue place, respectively, $m$ and $n$ stones at each turn, starting from an empty board. The surrounding strategy allows Red to force a win in the $(6n$$+$$1)$-for-$n$ case---this can be refined into an half-surrounding strategy to achieve a win in the $(4n$$+$$1)$-for-$n$ variation. Notice that the dynamics of the game changes significantly when both players place more than one stone per turn, so that the mirroring strategy of Theorem \ref{Theorem.main} cannot be used to show whether the two-for-two variation of infinite Hex is still a draw. However, we conjecture that $n$-for-$n$ infinite Hex is a draw also for $n>1$.

\section{Open questions}

We conclude by briefly collecting together here several questions we have about finite and infinite Hex that remain open.

\renewcommand\labelenumi{\theenumi.}
\begin{enumerate}

\item Is infinite Hex a draw when starting from a board with finitely many stones already placed? (Conjecture~\ref{Conjecture.Finite-advantage})

\item Can Blue win Hex played on arbitrarily large trapezoids? (see Theorem~\ref{Theorem.If-trapezoids-then-draw})

\item Equivalently, can Blue win infinite Hex played on all truncated quadrants? (Question~\ref{Question.Truncated-quadrants})

\item Can Red win infinite Hex when starting with the advantage of an infinite horizontal red line? And with an infinite red line with negative slope? And with a horizontal and `vertical' line together? (Question~\ref{Question.Infinite-advantage})

\item Can Red win infinite Hex when starting with the advantage of two red quadrants? (Question~\ref{Question.Quadrant-advantage})

\item Can Red win infinite Hex in standard play time $\omega$ if infinitely often, on turns chosen by Blue (or determined by some rule), Red is allowed to place two stones? (see Theorem~\ref{Theorem.2-for-1-win})

\item For what values of $m$ is the $m$-for-$n$ variant of infinite Hex a draw? In particular, is the $n$-for-$n$ variant a draw? (see Theorem~\ref{Theorem.2-for-1-win})

\item If Red wins a play of infinite Hex, must there be a winning red $\mathbb{Z}$-chain that is geodesic, that is, a winning chain that also constitutes a minimal-length red connection between any two hex tiles appearing in it? 

\item What is the game value of finite Hex played on a $n\times n$ rhombus? What bounds can be established? In light of the fact that strong players often nearly fill the finite Hex board, is the game value asymptotically $n^2/2$?

\end{enumerate}

\printbibliography

\end{document}